\definecolor{darkgreen}{RGB}{47,139,79}
\definecolor{darkblue}{RGB}{36,24,130}
\let\oldtocsection=\tocsection
\let\oldtocsubsection=\tocsubsection
\let\oldtocsubsubsection=\tocsubsubsection
\renewcommand{\tocsection}[2]{\hspace{0em}\oldtocsection{#1}{#2}}
\renewcommand{\tocsubsection}[2]{\hspace{1em}\oldtocsubsection{#1}{#2}}
\renewcommand{\tocsubsubsection}[2]{\hspace{2em}\oldtocsubsubsection{#1}{#2}}
\DeclareRobustCommand{\SkipTocEntry}[5]{}
\newtheorem{thm}{Theorem}[section]
\newtheorem{lem}[thm]{Lemma}
\newtheorem{prop}[thm]{Proposition}
\newtheorem{cor}[thm]{Corollary}
\newtheorem{Th}{Theorem}
\theoremstyle{definition}
\newtheorem{Def}[thm]{Definition}
\newtheorem{ex}[thm]{Example}
\theoremstyle{remark}
\newtheorem{rem}[thm]{Remark}
\numberwithin{equation}{section}
\newcommand{\al}{\alpha}
\newcommand{\B}{\mathcal{B}}
\newcommand{\Bb}{\overline{\B}}
\newcommand{\del}{\partial}
\newcommand{\De}{\Delta}
\newcommand{\F}{\mathcal{F}}
\newcommand{\FF}{\mathbb{F}}
\newcommand{\ga}{\gamma}
\newcommand{\la}{\lambda}
\newcommand{\La}{\Lambda}
\newcommand{\s}{\sigma}
\newcommand{\eps}{\varepsilon}
\newcommand{\RR}{\mathbb{R}}
\newcommand{\Si}{\Sigma}
\newcommand{\Z}{\mathbb{Z}}
\newcommand{\HH}{\nu_{C\! S}}
\newcommand{\maps}{\operatorname{Maps}}
\newcommand{\id}{\operatorname{id}}
\newcommand{\U}[1]{\underline{#1}}
\newcommand{\inc}{\hookrightarrow}
\newcommand{\rar}{\longrightarrow}
\newcommand{\sta}{\stackrel}
\newcommand{\minus}{\backslash}
\newcommand{\x}{\times}
\newcommand{\ot}{\otimes}
\newcommand{\op}{\oplus}
\newcommand{\st}{\star}
\newcommand{\poc}{\widehat{\vee}}
\newcommand{\lgl}{\langle}
\newcommand{\rgl}{\rangle}
\newcommand{\func}{\operatorname}
\newcommand{\GH}{\mathrm{GH}}
\newcommand{\CS}{\mathrm{CS}}
\DeclareMathOperator\concat{concat}
\DeclareMathOperator\cut{cut}
\DeclareMathOperator\sgn{sgn}
\DeclareMathOperator\Cr{Cr}
\DeclareMathOperator\Thom{Th} 
\DeclareMathOperator\AW{AW}
\DeclareMathOperator\Poin{P}
\begin{document}
\title{Product and coproduct in string topology} %\\
%Produit et coproduit en topologie des cordes}
\author{Nancy Hingston}
\author{Nathalie Wahl}
\date{\today }

\begin{abstract}
Let $M$ be a closed Riemannian manifold.  We extend the product of Goresky-Hingston, on
the cohomology of the free loop space of $M$ relative to the constant loops,
to a nonrelative product.  It is graded associative and commutative, and
compatible with the length filtration on the loop space, like the original
product.  We prove the following new geometric property of the dual
homology coproduct:  the nonvanishing of the $k$-th iterate of the
coproduct on a homology class ensures the existence of a loop with a
$(k+1)$-fold self-intersection in every representative of the class. For spheres
and projective spaces, we show that this is sharp, in the sense that
the $k$-th iterated coproduct vanishes precisely on those classes that have support in
the loops with at most $k$-fold self-intersections. 
We study the interactions between this cohomology product and the better-known Chas-Sullivan product. 
We give explicit integral chain level constructions of these loop products
and coproduct, including a new construction of the Chas-Sullivan product,
which avoid the technicalities of infinite dimensional tubular neighborhoods
and delicate intersections of chains in loop spaces.
%
%\medskip
%
%Pour une vari\'et\'e Riemannienne $M$ donn\'ee, nous \'etendons le
%produit de Goresky-Hingston, sur la cohomologie de l'espace des lacets
%libres de $M$ relative aux lacets constants, \`a un produit
%non-relatif. Ce produit est, comme le produit original, associatif, commutatif gradu\'e, et
%compatible avec la filtration de l'espace des lacets par leur longueur.  Nous prouvons la nouvelle propri\'et\'ee g\'eometrique
%suivante pour le coproduit dual en homologie: la non-trivialit\'e du
%$k$\`eme it\'er\'e du coproduit d'une classe d'homologie implique
%l'existence d'un lacet s'intersectant soi-m\^eme avec multiplicit\'e
%$(k+1)$ dans toute chaine representant la classe d'homologie. Pour les
%sph\`eres et espaces projectifs, on montre que l'implication inverse est
%aussi vraie: le $k$\`eme it\'er\'e du coproduit est z\'ero
%pr\'ecis\'ement sur les classes d'homologie qui ont leur support dans
%les lacets s'intersectants soi-m\^eme avec multiplicit\'e au plus
%$k$. 
%Nous \'etudions les interactions entre ce produit en cohomologie et le
%produit mieux connu de Chas-Sullivan. Nous donnons une construction explicite des
%deux produits au niveau des chaines, y compris une nouvelle
%construction du produit de Chas-Sullivan qui \'evite les
%technicalit\'es des voisinages tubulaires de dimension infinie et les
%intersections d\'elicates de chaines de lacets. 
  \end{abstract}

\maketitle

  Let $M$ be a closed oriented manifold of dimension $n$, for which we pick a
Riemannian metric, and let $\Lambda M=\maps(S^1,M)$ denote its free
loop space. Goresky and the first author defined in \cite{GorHin} a product $%
\oast$ on the cohomology $H^*(\Lambda M,M)$, 
relative to the constant loops $M\subset \Lambda M$. They showed that this
cohomology product behaves as a form of ``dual'' of the Chas-Sullivan
product \cite{CS99} on the homology $H_*(\Lambda M)$, at least through the
eyes of the energy functional. In the present paper, we lift this relative
cohomology product and associated homology coproduct to chain level
non-relative product $\widehat{\oast}\colon C^*(\Lambda M)\otimes
C^*(\Lambda M)\to C^*(\Lambda M)$ and coproduct $\widehat{\vee} \colon
C_*(\Lambda)\to C_*(\Lambda)\otimes C_*(\Lambda)$, on integral singular chains.
The operations $\widehat{\oast}$ and $\widehat{\vee}$ are the ``extension by
zero'' of their relative predecessors $\oast$ and $\vee$ under the splitting 
on homology and cohomology induced by the evaluation map $\Lambda M\to M$.
We study the algebraic and geometric properties of these two new operations, 
$\widehat{\vee}$ and $\widehat{\oast}$, showing in particular that the
properties of the original product and coproduct proved in \cite{GorHin},
such as associativity, graded commutativity, and compatibility with the
length filtration, remain valid for the extended versions. Our main result
is the following: for $[A]\in H_*(\Lambda M)$, the non-vanishing of the
iterated coproduct $\widehat{\vee}^k[A]$ implies the existence of loops with 
$(k+1)$-fold intersections in the image of any chain representative of $[A]$%
; on spheres and projective spaces, we show that this is a complete
invariant in the sense that the converse implication also holds.

An advantage of having a coproduct defined on $H_*(\Lambda M)$ is that it
makes it possible to study its interplay with the Chas-Sullivan product; we
show that the equation $\wedge\circ \vee=0$ holds. We also
exhibit, through computations, the failure of an expected Frobenius equation.

\medskip

To state our main results more precisely, we need the following ingredients:
Fix a small $\varepsilon>0$ smaller than the injectivity radius and let 
\begin{equation*}
U_M=\{(x,y)\in M^2\ |\ |x-y|<\varepsilon\}
\end{equation*}
be the $\varepsilon$-neighborhood of the diagonal $\Delta M$ in $M^2$, with 
\begin{equation*}
\tau_M\in C^n(M^2,M^2\backslash U_M)
\end{equation*}
a Thom class for an associated tubular embedding $\nu_M\colon TM\overset{%
\simeq}{\longrightarrow} U_M$. (A specific model is given in Section~\ref%
{tubsec}.) Consider the evaluation maps 
\begin{equation*}
e\times e\,\colon \Lambda M^2 \longrightarrow M^2 \ \ \ \text{and}\ \ \
e_I\colon \Lambda M\times I\longrightarrow M^2
\end{equation*}
taking a pair $(\gamma,\delta)$ to $(\gamma(0),\delta(0))$ and $(\gamma,s)$
to $(\gamma(0),\gamma(s))$. The tubular embedding $\nu_M$ gives rise to a
retraction $U_M\to \Delta M$, which we lift to retraction maps

\medskip

\begin{tabular}{lrcrcl}
$R_{\CS}\colon$ & $(e\times e)^{-1}(U_M)$ & $\longrightarrow$ & $(e\times
e)^{-1}(\Delta M)$ & $=$ & $\{(\gamma,\delta)\in \Lambda M^2\ | \
\gamma(0)=\delta(0)\}$ \\ 
$R_{\GH}\colon$ & $e_I^{-1}(U_M)$ & $\longrightarrow$ & $e_I^{-1}(\Delta M)$
& $=$ & $\{(\gamma,s)\in \Lambda M\times I\ | \ \gamma(s)=\gamma(0)\}$%
\end{tabular}

\medskip

\noindent as depicted in Figure~\ref{fig:sticks}: each map adds two geodesic
sticks to the loops to make them intersect as required---see Sections~\ref%
{sec:newCS} and~\ref{sec:newGH} for precise definitions. Finally, let 
\begin{equation*}
\concat\colon \Lambda M\times_M\Lambda M = (e\times e)^{-1}(\Delta M)
\longrightarrow M \ \ \ \text{and}\ \ \ \cut\colon e_I^{-1}(\Delta M)
\longrightarrow \Lambda M\times \Lambda M
\end{equation*}
be the concatenation and cutting maps, the latter cutting the loop at its
self-intersection time $s$.

\begin{figure}[h]
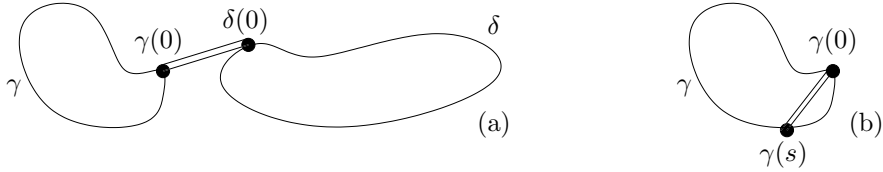

\centering
\begin{lpic}{sticks(0.55,0.55)}
\lbl[b]{-1,10;$\ga$}
\lbl[b]{34,20;$\ga(0)$}
\lbl[b]{55,25;$\delta(0)$}
\lbl[b]{115,25;$\delta$}
\lbl[b]{161,10;$\ga$}
\lbl[b]{197,20;$\ga(0)$}
\lbl[t]{185,-1;$\ga(s)$}
\lbl[b]{115,0;(a)}
\lbl[b]{205,0;(b)}
\end{lpic}
\caption{The retraction maps $R_{\CS}$ and $R_{\GH}$.}
\label{fig:sticks}
\end{figure}

Our results are rooted in the following chain-level construction of the
Chas-Sullivan product and the coproduct, based on the ideas of Cohen and
Jones in \cite{CohJon} but avoiding the technicalities of infinite
dimensional tubular neighborhoods, and avoiding the subtle limit arguments
of \cite{GorHin}.

\begin{Th}
\label{Th:CSGH} The Chas-Sullivan $\wedge$ product of \cite{CS99} and the
coproduct $\vee$ of Goresky-Hingston \cite{GorHin} (see also Sullivan \cite%
{Sul04}) admit the following integral chain level descriptions: for $A\in
C_p(\Lambda M)$ and $B\in C_q(\Lambda M)$, 
\begin{equation*}
A\wedge B=(-1)^{n-np} \concat \left(R_{\CS}((e\times e)^*(\tau_M)\cap (A\times
B))\right) .
\end{equation*}
and for $C\in C_k(\Lambda M,M)$,  
\begin{equation*}
\vee C= \sgn (\cut (R_{\GH} (e_I^*(\tau_M)\cap (C\times I))))
\end{equation*}
with $A\wedge B\in C_{p+q-n}(\Lambda)$ and $\vee C\in \bigoplus_{p+q=k+1-n}C_p(\Lambda M,M)\otimes C_q(\Lambda M,M)$,
with sign change $\sgn$ given by $(-1)^{n-np}$ on the terms of bidegree $(p,q)$.
\end{Th}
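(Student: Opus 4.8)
The plan is to realize both operations as the composite of a ``wrong-way'' (umkehr) map with a geometric map --- $concat_*$ for the product, $cut_*$ for the coproduct --- and then to check that the displayed formula is an honest chain-level model for that composite. For the Chas--Sullivan product, recall that $\Lambda M\times_M\Lambda M=(e\times e)^{-1}(\Delta M)$ sits inside $\Lambda M^2$ as the preimage of the diagonal, and that one standard description of the Chas--Sullivan product (following \cite{CS99}, in the Thom--Pontryagin form going back to \cite{CohJon}) realizes it as $A\wedge B=concat_*\bigl(\iota^{!}(A\times B)\bigr)$, where $\iota^{!}\colon H_*(\Lambda M^2)\to H_{*-n}(\Lambda M\times_M\Lambda M)$ is the umkehr map of $\iota\colon \Lambda M\times_M\Lambda M\hookrightarrow\Lambda M^2$. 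I would first record the finite-dimensional prototype: for $\Delta M\subset M^2$ with tubular neighbourhood $U_M$, Thom class $\tau_M\in C^n(M^2,M^2\setminus U_M)$ and retraction $r\colon U_M\to\Delta M$ coming from $\nu_M$, the chain-level operation $z\mapsto r_*(\tau_M\cap z)$ represents the Thom isomorphism $H_*(M^2)\to H_*(M^2,M^2\setminus U_M)\xrightarrow{\ \cong\ }H_{*-n}(\Delta M)$, hence the umkehr of $\Delta M\hookrightarrow M^2$. This is standard once one subdivides $z$ so that each simplex maps either into $U_M$ or into the complement of a smaller neighbourhood of $\Delta M$, which guarantees that $\tau_M\cap z$ is genuinely supported in $U_M$ so that $r$ can be applied.

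Next I would pull this picture back along $e\times e$. The point that makes the formula work is that $W:=(e\times e)^{-1}(U_M)$, although not a vector bundle, still carries the Thom cochain $(e\times e)^*\tau_M\in C^n(\Lambda M^2,\Lambda M^2\setminus W)$ together with the fibrewise-contracting retraction $R_{CS}\colon W\to\Lambda M\times_M\Lambda M$ constructed in Section~\ref{sec:newCS} by adding the two geodesic sticks; this is exactly the data needed for a relative Thom isomorphism $H_*(\Lambda M^2,\Lambda M^2\setminus W)\xrightarrow{\ \cong\ }H_{*-n}(\Lambda M\times_M\Lambda M)$, and one checks it agrees with the genuine umkehr --- either by comparing with an honest finite-codimension tubular neighbourhood sitting inside $W$, or by the naturality of the Thom class under $e\times e$. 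Composing with $concat_*$ and inserting the Koszul/orientation sign $(-1)^{np+n}$ dictated by the cap-product convention and the degree shift yields the first formula; that the output lies in $C_{p+q-n}(\Lambda M)$ is immediate from the degrees.

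For the coproduct the argument is parallel, with $\Lambda M\times I$ in place of $\Lambda M^2$ and the figure-eight locus $e_I^{-1}(\Delta M)=\{(\gamma,s)\mid\gamma(s)=\gamma(0)\}$ in place of $\Lambda M\times_M\Lambda M$: capping $C\times I$ with $e_I^*(\tau_M)$ produces a chain supported in $W_{GH}:=e_I^{-1}(U_M)$, which $R_{GH}$ retracts onto the figure-eight locus, and $cut$ then records the pair of loops obtained by cutting at the self-intersection time. Here one must analyse $\partial(C\times I)=(\partial C)\times I\ \pm\ C\times\partial I$: the hypothesis $C\in C_*(\Lambda M,M)$ kills the degenerate contribution of constant loops (for which $e_I$ maps all of $C\times I$ into $\Delta M$, so capping would produce an Euler-class term), while the term $C\times\partial I$ contributes, after cutting at $s=0$ and $s=1$, loops of the form $(\mathrm{const},\gamma)$ and $(\gamma,\mathrm{const})$ --- which is precisely why the output is only well defined in the quotient $C_*(\Lambda M^2)/\bigl(C_*(M\times\Lambda M)+C_*(\Lambda M\times M)\bigr)$ and sits in degree $p-n+1$, the extra $+1$ being the $I$-direction. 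Finally I would identify this operation on homology with the Goresky--Hingston coproduct of \cite{GorHin} (cf.\ also \cite{Sul04}), using that $\vee$ is likewise the composite of the umkehr for $e_I^{-1}(\Delta M)\hookrightarrow\Lambda M\times I$ with $cut_*$.

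The main obstacle I anticipate is this last identification: matching the clean cap-with-Thom-class description against the original definitions of \cite{CS99} and especially of \cite{GorHin}, which are phrased geometrically, via transverse chains or a limiting/min-max procedure, rather than at the chain level. The crux is a single lemma --- that ``cap with the pulled-back Thom class, then retract along $R_{CS}$ (resp.\ $R_{GH}$)'' computes the umkehr map for the relevant evaluation-pullback submanifold --- which requires a careful chain-level Thom isomorphism together with the small-simplices subdivision argument ensuring the capped chain really is supported in $W$ (resp.\ $W_{GH}$) so that the retraction is defined, and a check of independence of the subdivision. Once that lemma is in place, the comparison with the classical operations and the sign bookkeeping are routine.
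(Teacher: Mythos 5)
Your proposal is correct and follows essentially the same route as the paper: reduce to showing that ``cap with the pulled-back Thom class, then apply the stick retraction'' computes the Cohen--Jones/Goresky--Hingston umkehr map, and establish this by comparing with an honest finite-codimension tubular neighbourhood of $\Lambda\times_M\Lambda$ (resp.\ $F_{(0,1)}$) compatible with $\nu_M$, which is exactly what Propositions~\ref{TubCS} and~\ref{coemb} construct and Lemmas~\ref{retract} and~\ref{lem:GHhtpy} exploit via the homotopy $R_{CS}\simeq k_{CS}$, $R_{GH}\simeq k_{GH}$. You correctly flag the real subtlety --- matching against Goresky--Hingston's limiting construction on $\Lambda\times(0,1)$ where the tubular neighbourhood degenerates at $s=0,1$ --- and the paper's resolution (that $R_{GH}$, unlike $\nu_{GH}$, is defined already at $\alpha=0$, so the limit is taken on the chain level) is precisely the content of Proposition~\ref{prop:coequiv}.
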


\noindent We note that the chain complex $C_{*}(\Lambda M,M)\otimes C_*(\Lambda M,M)$ computes the
relative homology group $H_*(\Lambda\times\Lambda,M\times\Lambda\cup
\Lambda\times M)$, see Remark~\ref{rem:+U} for more details about this.

\smallskip

In the case of the Chas-Sullivan product, there exist many approaches to
chain level constructions, in particular using more ``geometric'' chains,
applying more directly the original idea of Chas and Sullivan of
intersecting chains (see e.g.~\cite{Cha05,Lau11,Iri17}). The idea of using
small geodesics to make up for ``almost intersections'' was already
suggested in \cite{Tam} and can also be found in \cite{DCP}.

We emphasize the similarity between the definitions of $\wedge$ and $\vee$
given above. Note that cutting and concatenating are essentially inverse
maps. The product and coproduct are corrected by a sign for better algebraic
properties; see Theorems~\ref{thm:CSalg} and \ref{thm:coGHalg} and Appendix~\ref{app:signs} for more details about this.
In Propositions~\ref{prop:transCS} and \ref{prop:strong4}, we show that for cycles parametrized by manifolds that, after applying the evaluation map,  intersect the diagonal in $M\x M$ transversally, the product and coproduct can be computed by a geometric intersection followed by concatenation or cut maps.

\medskip

The splitting of $H_*(\Lambda M)\cong H_*(\Lambda M,M)\oplus H_*(M)$ by the
evaluation map makes it possible to define an ``extension by 0'' of the
coproduct, setting it to be trivial on the constant loops. The same holds
for chains:

\begin{Th}
\label{Th:poc} There is a unique lift 
\begin{equation*}
\widehat{\vee}\colon C_*(\Lambda M)\longrightarrow \bigoplus_{p+q=*+1-n}C_p(\Lambda M)\otimes C_q(\Lambda M)
\end{equation*}
of the coproduct $\vee$ of Theorem~\ref{Th:CSGH} satisfying that 
\begin{equation*}
\langle x\times y,\widehat \vee Z\rangle=0 \ \ \ \ \text{ if } \ x\in
e^*C^{\ast }(M), \; y\in e^*C^*(M), \text{\textit{\ or }} Z\in C_{\ast}(M)
\end{equation*}
for $e\colon \Lambda M\to M$ the evaluation at $0$, and where we identify $M$ with the constant loops in $\La M$. For $A\in C_*(\Lambda M)$%
, with $p_*A\in C_*(\Lambda M,M)$ its projection, it is defined by 
\begin{equation*}
\widehat{\vee} A= (1-e_*)\times (1-e_*) \vee \! (p_*A).
\end{equation*}
The induced map in homology $\widehat{\vee}\colon H_*(\Lambda)\to
H_{*-n+1}(\Lambda\times\Lambda)$ has the following properties:

\begin{enumerate}[(i)]

\item (Vanishing)  
\begin{equation*}
\wedge\circ \widehat{\vee}=0.
\end{equation*}

\item (Support) If $Z\in C_{\ast }(\Lambda )$ has the property that every
nonconstant loop in its image has at most $k$-fold intersections, then 
\begin{equation*}
\widehat{\vee }^{k}[Z]=\widehat{\vee }^{k}(\Delta \lbrack Z])=0,
\end{equation*}
for $\Delta\colon H_*(\Lambda M)\to H_{*+1}(\Lambda M)$ the map induced by
the circle action on $\Lambda M$.
\end{enumerate}
\end{Th}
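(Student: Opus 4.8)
The plan is to establish the two properties of $\widehat\vee$ in homology by reducing them, as far as possible, to statements about the relative coproduct $\vee$ and the Thom-signed Chas--Sullivan product $\wedge_{Th}$ at the chain level, using the explicit formulas of Theorems~\ref{Th:CSGH} and~\ref{Th:poc}. Throughout, the key mechanism is that both operations are built from the same two retraction-and-cut/concatenate procedures attached to a Thom class near the diagonal; geometrically, $\widehat\vee$ first splits a loop at a self-intersection into two loops meeting at a point, while $\wedge_{Th}$ glues two loops meeting at a point back into one. So the composite $\wedge_{Th}\circ\widehat\vee$ should, on chains, ``cut then reglue,'' which up to sign and up to the correction terms $(1-e_*)\times(1-e_*)$ returns something supported on the original (un-cut) configuration — and such a family has positive-codimension image under the relevant evaluation, forcing the Thom-class cap product to vanish at the chain level, hence in homology.

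For part (i), the concrete steps are: (1) unwind $\wedge_{Th}\circ\widehat\vee(A)$ using $\widehat\vee A=(1-e_*)\times(1-e_*)\vee(p_*A)$ and the chain formula for $\vee$, observing that the $(1-e_*)$ corrections only change $\vee(p_*A)$ by chains supported in $M\times\Lambda M$ and $\Lambda M\times M$, on which $\wedge_{Th}$ of such a product is controlled (concatenation with a constant loop is homotopic to the identity, so these correction terms contribute nothing new in homology); (2) for the main term, compose $cut\circ R_{GH}$ with $(e\times e)^*(\tau_M)\cap R_{CS}(-)$ and $concat$, and track how the two pullbacks $e_I^*(\tau_M)$ and $(e\times e)^*(\tau_M)$ interact. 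The essential point is that after cutting a loop $\gamma$ at $s$ with $\gamma(s)\approx\gamma(0)$ and then re-concatenating, the evaluation $e\times e$ of the output lies (up to the geodesic-stick corrections) on the diagonal already, so capping with a second Thom class $(e\times e)^*(\tau_M)$ along a family that has already been pushed into $e_I^{-1}(U_M)$ is capping with a Thom class along a cycle of too-small dimension transverse to the relevant submanifold — concretely, one identifies the composite with $\pm$ a cap product $\tau_M\cap(\tau_M\cap(\text{something}))$ against the \emph{same} evaluation map, and $\tau_M\smile\tau_M=0$ in $C^{2n}$ of a space which retracts to the $n$-dimensional $\Delta M$, or more carefully the second cap lands in a relative complex that is acyclic in the relevant range. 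I expect this bookkeeping — matching up the two Thom classes and the two retractions so that one sees a square-zero phenomenon rather than just ``codimension $2n$ in something of dimension $n$'' — to be the main obstacle, and the cleanest route is probably to prove a chain-homotopy $\wedge_{Th}\circ\widehat\vee\simeq 0$ directly, using a straight-line homotopy of geodesic sticks.

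For part (ii), the plan is an induction on $k$ using the geometric meaning of $\vee$. If every nonconstant loop in the image of $Z$ has at most $k$-fold self-intersections, then after one application of the chain-level $\vee$ — which is supported on the subspace $e_I^{-1}(U_M)$ of pairs $(\gamma,s)$ with $\gamma(s)$ near $\gamma(0)$, cut at $s$ — each of the two output loops inherits at most $k-1$ ``remaining'' self-intersections on the relevant side, so that $\vee Z$, and hence $\widehat\vee Z$, has image in loops with at most $(k-1)$-fold self-intersections on each factor; iterating $k$ times produces a chain supported on pairs of loops with \emph{no} self-intersection on the active factor, and the next application of $\vee$ is then forced to land in $C_*(M\times\Lambda M)+C_*(\Lambda M\times M)$ — i.e.\ it is zero after the $(1-e_*)\times(1-e_*)$ correction. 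One must be slightly careful that ``$k$-fold self-intersection'' is counted correctly under cutting (a $k$-fold point of $\gamma$ distinct from the cut point survives as a $k$-fold point of one of the two pieces, and the cut point itself accounts for one unit of multiplicity), and that the $\Delta$-statement follows because $\Delta[Z]$ is represented by the chain $S^1\times Z$ composed with the rotation action, whose image consists of the same loops reparametrised, hence with the same self-intersection multiplicities; so $\widehat\vee^k\Delta[Z]=0$ by the identical argument. The subtlety here is purely combinatorial — making the notion of ``at most $k$-fold intersection'' robust enough to pass through the cut map — so I would state a clean lemma to that effect and feed it into the induction. Finally, both conclusions, proven at the chain level up to chain homotopy, descend to $H_*(\Lambda M)$.
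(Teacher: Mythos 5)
Both parts of your proposal contain genuine gaps, and in both cases the gap is precisely where the paper's key insight lies.

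\textbf{Part (i).} Your central claim that ``$\tau_M\smile\tau_M = 0$ in $C^{2n}$ of a space which retracts to the $n$-dimensional $\Delta M$'' is false. The space $U_M$ is a tubular neighborhood of $\Delta M$, hence $2n$-dimensional, and under the Thom isomorphism $\tau_M\smile\tau_M$ corresponds to the Euler class $e(TM)\in H^n(M)$, which is nonzero whenever $\chi(M)\neq 0$. So the composite $\tau_M\cap(\tau_M\cap(-))$ that you identify does \emph{not} vanish; it is a twisted intersection-with-Euler-class operator. This is exactly the $\chi(M)$ phenomenon that Tamanoi identified for the ``trivial coproduct.'' The paper's proof of (i) proceeds through Lemmas~\ref{lem:precop} and~\ref{lem:CSF}: concatenation identifies $\Lambda\times_M\Lambda$ with $F_{1/2}=\{\gamma\,:\,\gamma(\tfrac12)=\gamma(0)\}$, under which the short Chas--Sullivan product restricted to $H_*(\Lambda\times_M\Lambda)$ becomes the trivial pre-coproduct $\vee^F_{1/2}$; this is identically zero \emph{except} in degree $n$, where it is $\chi(M)\cdot([-]\wedge_{Th}[\ast])$. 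The correction terms $(1-e_*)\times(1-e_*)$ you dismiss as ``contributing nothing new'' are in fact the whole point: they force $\widehat\vee$ to miss $H_*(M\times M)\subset H_*(\Lambda\times_M\Lambda)$ and hence to avoid the exceptional degree-$n$ range where the Euler class would otherwise ruin the vanishing. A direct chain homotopy $\wedge_{Th}\circ\widehat\vee\simeq 0$ of the kind you suggest cannot exist without first projecting away the Euler-class contribution, which is a homological statement, not a local-geometric one.

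\textbf{Part (ii).} Your induction — cut once to reduce the intersection count, then iterate — is not how the paper argues, and the combinatorial lemma you would need does not hold. Cutting $\gamma$ at a near-basepoint time $s$ leaves a $k$-fold intersection of $\gamma$ at any other point $p\neq\gamma(0)$ intact, so the pieces do not have at most $(k-1)$-fold intersections in general. Even the \emph{basepoint} multiplicity of a piece need not drop: if $s$ is close to $0$ the left piece is trivial and the right piece inherits essentially all of $\gamma$'s basepoint preimages. More seriously, the chain $\widehat\vee Z$ is supported on cut loops with geodesic sticks attached, and these sticks can introduce new self-intersections, so ``image in loops with fewer intersections'' is not a chain-level fact you can feed into the iterate $(1\times\widehat\vee)$. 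The paper instead proves the $k$-fold statement directly, not by induction: it works in the constant-speed model $\overline\Lambda$, uses a compactness argument (Lemma~\ref{lem:S} and its $\Delta^k$ analogue) to show that if $\gamma$ has at most $k$-fold intersections and all of $\gamma(s_1),\dots,\gamma(s_k)$ lie within $\tilde\varepsilon$ of $\gamma(0)$, then by pigeonhole some consecutive gap $|s_{i+1}-s_i|$ is small; a homotoped retraction $\widetilde{R_k}$ then sends the chain into $F^{triv}_{\Delta^k}$ where one of the $k+1$ subarcs is short enough to contract to a constant, and there the deflation operators $(1-e_i)$ kill everything. Your intuition that the cut-then-deflate machinery and the extension-by-zero must be the mechanism is correct, but the pigeonhole-in-$\Delta^k$ step — not a one-at-a-time induction — is what makes the counting go through, and that step is absent from your plan. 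Your remark that the $\Delta$-statement reduces to the non-rotated case because rotation preserves intersection multiplicities is correct and essentially matches the paper's reduction.
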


The vanishing part is a form of involutive identity. Its proof uses the intermediate result that the Chas-Sullivan product is almost completely trivial on the subspace $\La\x_M\La\subset \La\x \La$ of pairs of loops that already have the same basepoint, see Proposition~\ref{prop:CSF2}. 
The last part of the theorem confirms the essence of the geometry behind the
homology coproduct: it looks for self-intersections and cuts them apart. The corresponding result for the unlifted coproduct $\vee$ is stated in Theorem~\ref{thm:coproduct2}. The
proof of this result was made possible by Theorem~\ref{Th:CSGH}.

We define in Section~\ref{sec:int} the \emph{intersection multiplicity} $%
\func{int}([Z])$ of a homology class $[Z]\in H_*(\Lambda)$; it has the
property that $\func{int}([Z])\le k$ whenever 
there exists a representative of $[Z]$ with image in the loops with at most $%
k$-fold intersections (in the sense of Definition~\ref{def:kfold}). The
above result shows that if $\func{int}([Z])\le k$, then $\widehat{\vee}^k
[Z] = 0$, or equivalently that when $\widehat{\vee}^k[Z]\neq 0$, then \emph{%
any} chain representative of $[Z]$ must have at least one loop with a $k+1$%
-fold intersection in its image. For spheres and projective spaces, we get
the following stronger statement:

\begin{Th}
  \label{Thprop:int}If $M=S^{n}$, $\mathbb{R}P^{n}$, $\mathbb{C}P^{n}$, $\mathbb{H}P^{n}$ or $\mathbb{O}P^{2}$, 
  then for any $[Z]\in H_{\ast }(\Lambda )$,  and $k\geq 1$, 
\begin{equation*}
\func{int}([Z])\leq k\ \ \Longleftrightarrow\ \ \widehat{\vee}^{k}[Z]=0.
\end{equation*}
\end{Th}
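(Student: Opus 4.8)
The plan is to prove the two directions separately, noting that the implication $\func{int}([Z])\le k \Rightarrow \widehat\vee^k[Z]=0$ is exactly the Support statement of Theorem~\ref{Th:poc}(ii), so only the converse needs new work: if every chain representative of $[Z]$ contains a loop with a $(k+1)$-fold self-intersection (i.e.\ $\func{int}([Z])>k$), then $\widehat\vee^k[Z]\ne 0$. Equivalently, contrapositively, we must show that for $M=S^n,\mathbb{R}P^n,\mathbb{C}P^n,\mathbb{H}P^n$, the vanishing of $\widehat\vee^k[Z]$ forces the existence of a representative supported on loops with at most $k$-fold self-intersections. The natural strategy is to compute both $H_*(\Lambda M)$ and the iterated coproduct $\widehat\vee^k$ completely for these manifolds, using their known loop space homology (via Morse theory on the energy functional, where the geodesics are particularly simple, or via the known ring structure and the results of Goresky--Hingston and Ziller), and then exhibit, for each homology class on which $\widehat\vee^k$ vanishes, an explicit geometric representative carried by loops whose self-intersection number is bounded by $k$.

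Concretely, I would first recall that for these rank-one symmetric spaces the free loop space homology is generated, in each energy level, by the submanifolds of closed geodesics (great circles, and their iterates) together with the families swept out by the circle action and by ``short'' perturbations; the Morse-theoretic filtration by energy matches the length filtration that $\widehat\vee$ respects. Since $\widehat\vee$ is compatible with the length filtration (a property inherited from $\vee$ of \cite{GorHin}), each application of $\widehat\vee$ strictly decreases length on the nonvanishing part, so $\widehat\vee^k$ is controlled by how many times one can ``cut'' a family of closed geodesics before the energy drops below the first nonconstant critical value. For an $m$-fold iterate of a prime closed geodesic, one can cut at most $m-1$ times nontrivially, and an $m$-fold iterate has exactly $m$-fold self-intersections (at the basepoint) --- this is the matching that will give the sharp bound. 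I would make this precise by identifying, level by level, the classes killed by $\widehat\vee^k$ with the classes that live below the relevant iterate, and checking that the latter are represented by iterated-geodesic families with self-intersection multiplicity $\le k$.

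The key computational input is an explicit formula for $\widehat\vee^k$ on generators. For $S^n$ I would use the classical description of $H_*(\Lambda S^n)$ (with its two families of generators coming from the fibration $\Omega S^n \to \Lambda S^n \to S^n$, or equivalently the Morse complex with critical manifolds the spaces of closed geodesics $\cong$ unit sphere bundle at each multiplicity), compute $\vee$ on the relative classes using the cut map at the basepoint self-intersection of iterated great circles, and then transport via the ``extension by zero'' formula $\widehat\vee A = (1-e_*)\times(1-e_*)\,\vee(p_*A)$ from Theorem~\ref{Th:poc}. The projective space cases should then follow either by the same Morse-theoretic bookkeeping (the closed geodesics are again circles, with multiplicities, and the isometry groups act transitively enough to pin down the homology) or by transfer/covering-space arguments from $S^n$ in the $\mathbb{R}P^n$ case and from the Hopf fibrations in the $\mathbb{C}P^n,\mathbb{H}P^n$ cases. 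Throughout, the ``for any coefficients'' clause means I should run the argument with field coefficients $\mathbb{F}_p$ and $\mathbb{Q}$ and then invoke universal coefficients; the torsion in $H_*(\Lambda \mathbb{R}P^n;\mathbb{Z})$ is the one place this requires care.

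The main obstacle I expect is the converse direction's \emph{geometric realization} step: producing, for a class $[Z]$ with $\widehat\vee^k[Z]=0$, an actual cycle representative whose image lies in the space of loops with $\le k$-fold self-intersections. Vanishing of an iterated coproduct is an algebraic statement, and turning it into a geometric support statement requires knowing that the Morse-theoretic generators (iterated closed geodesics and the small families around them) can be chosen with controlled self-intersections, and that a null-homologous combination of ``bad'' generators can be pushed off the high-multiplicity strata --- essentially a statement that the inclusion of the $\le k$-fold-intersection loop space into $\Lambda M$ is homologically surjective in the relevant range for these specific $M$. This is where the special geometry of $S^n$ and the projective spaces is essential: their closed geodesics are all closed with the same (rationally) length, the critical submanifolds are homogeneous, and the local structure at an $m$-fold iterate is completely understood, so one can write down explicit chains. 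I would isolate this as a lemma ``for $M$ as above, $H_*(\text{loops with} \le k\text{-fold intersections}) \to H_*(\Lambda M)$ hits everything in the kernel of $\widehat\vee^k$,'' prove it by the level-wise Morse argument, and then combine with Theorem~\ref{Th:poc}(ii) to close the equivalence.
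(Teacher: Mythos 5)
Your overall architecture is the right one --- reduce the vanishing of $\widehat{\vee}^{k}[Z]$ to the statement that $[Z]$ is supported on the sublevel set $\Lambda^{\le kL}$, then realize everything there by concatenations of circles --- and you correctly isolate the geometric realization step as the crux. But the pivot from ``$\widehat{\vee}^{k}[Z]=0$'' to ``$[Z]$ is supported on $\Lambda^{\le kL}$'' is where your proposal has a genuine gap. The heuristic that ``each application of $\widehat{\vee}$ strictly decreases length on the nonvanishing part'' is not the operative mechanism: compatibility with the length filtration constrains what the coproduct does to a class of \emph{known} support, whereas the converse direction needs a \emph{detection} statement --- that any class not supported on $\Lambda^{\le kL}$ is seen by a $k$-fold iterated coproduct. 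The paper gets this from the Goresky--Hingston structure theorems (\cite[Thm 14.2, Cor 14.8]{GorHin}): for a metric all of whose geodesics are closed with minimal period $L$, every class in $H^{*}(\Lambda,\Lambda^{\le kL};F)$ is a finite sum of products $[x_1]\circledast\cdots\circledast[x_j]$ with $j>k$ and each $[x_i]$ supported at level one; pairing such a product against $[Z]$ via $\langle x_1\widehat{\circledast}\cdots\widehat{\circledast}x_j,Z\rangle=\pm\langle x_1\times\cdots\times x_j,\widehat{\vee}^{j-1}Z\rangle$ contradicts $\widehat{\vee}^{k}[Z]=0$ whenever $[Z]$ maps nontrivially to $H_*(\Lambda,\Lambda^{\le kL})$. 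Without citing (or reproving) that generation theorem, your ``level-by-level identification'' collapses into a complete computation of $\widehat{\vee}^{k}$ on all of $H_*(\Lambda M)$ for all four families, which you only sketch and which is genuinely hard for $\mathbb{R}P^{n}$ (torsion), $\mathbb{C}P^{n}$ and $\mathbb{H}P^{n}$; the transfer and Hopf-fibration reductions you suggest do not obviously commute with the cutting construction defining $\widehat{\vee}$.

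For the realization step, the paper does not need your proposed lemma that the loops with at most $k$-fold intersections surject onto the kernel of $\widehat{\vee}^{k}$ as a separate Morse-theoretic statement: it uses that the Chas--Sullivan ring is generated by $H_{*}(\Lambda^{\le L})$ (\cite[Prop 5.3, Thm 13.4]{GorHin}) and that $H_{*}(\Lambda^{\le L})$ is carried by the cycle $\Theta$ of all circles (a Morse--Bott dimension count), so that $H_{*}(\Lambda^{\le kL})$ is carried by $\mathbf{\Theta}^{\wedge k}$, whose nonconstant loops visibly have at most $k$-fold intersections. Finally, ``for any coefficients'' is handled not by universal coefficients but by choosing, for a class with nonzero image in $H_{*}(\Lambda,\Lambda^{\le kL})$, a field over which that image survives and a relative cohomology class pairing nontrivially with it; your universal-coefficients route would additionally require tracking how $\widehat{\vee}^{k}$ interacts with the Tor terms, which you have not addressed.
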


In Section~\ref{sec:compspheres1} and \ref{sec:computations}, we give a complete computation of the
coproduct $\widehat{\vee}$ on the homology $H_*(\Lambda S^n)$ with $S^n$ an
odd sphere. We use this computation to show in Remark~\ref{rem:Frob} that the ``Frobenius formula''
fails: 
\begin{equation*}
\widehat{\vee}\circ \wedge \neq (1\times \wedge)\circ \widehat{%
\vee} + (\wedge\times 1)\circ \widehat{\vee}
\end{equation*}
and give geometric reasons for this. We also show in Remark~\ref{rem:wedgepoc} that, in contrast to $\wedge\circ \widehat{\vee}$, the composition $\wedge\circ \widehat{\vee}_{\Thom}
$, for $\widehat{\vee}_{\Thom}$ the non-sign corrected version of the coproduct $\widehat{\vee}$, is non-zero in general.

\medskip

Finally, we relate the dual cohomology product to the product of \cite%
{GorHin} and show that it still has the same properties as the unlifted
product. In the statement, $\widehat{\vee}^*$ denotes the dual of the map $%
\widehat{\vee}$ of Theorem~\ref{Th:poc}.

\begin{Th}
\label{Th:cop} The dual product 
\begin{equation*}
  \xymatrix{\widehat{\circledast }\colon C^{p}(\Lambda M)\otimes C^{q}(\Lambda M) %\ar[rr]^-{(-1)^{q(n-1)} \x} && C^{p+q}(\La M\x \La M)
    \ar[r]^-{\poc^*} & C^{p+q+n-1}(\Lambda M)}
\end{equation*}%
is a chain-level definition of the canonical extension by 0 of the relative
cohomology Goresky-Hingston product \cite{GorHin}. The induced degree $-1$ product on the shifted cohomology $H^{*-n}(\La)$ has the following properties:

\begin{enumerate}[(i)]

\item It satisfies the graded associative and commutative relations 
\begin{equation*}
[x]\widehat{\oast} [y]=(-1)^{pq+1}[y]\widehat{\oast} [x] \ \ \textrm{and}\ \ ([x]\widehat{\oast} [y])\widehat{\oast} [z]=(-1)^{r+1}[x]\widehat{\oast} ([y]\widehat{\oast} [z]) 
\end{equation*}
for all $[x]\in H^{p-n}(\Lambda)$, $[y]\in H^{q-n}(\Lambda)$ and $[z]\in H^{r-n}(\Lambda)$.

\item (Morse theoretic inequality) Let $[x],[y]\in H^{\ast}(\Lambda )$ with $%
[x]\widehat{\oast} [y]\neq 0$, then 
\begin{equation*}
\Cr([x]\widehat{\circledast }[y])\geq \Cr([x])+\Cr([y])
\end{equation*}
where $\Cr([a])=\sup\{l\in \mathbb{R} \, |\, [a] \text{ is supported on }
\Lambda^{\ge L}\}$ for $\Lambda^{\ge L}$ the subspace of loops of energy at
least $L^2$.
\end{enumerate}
\end{Th}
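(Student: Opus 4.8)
The plan is to transport everything through duality from the coproduct side, where the work has already been done. The first step is to check that the displayed formula for $\widehat{\oast}$ really is the extension by zero of the Goresky--Hingston product. Since $\widehat{\vee}$ is characterized in Theorem~\ref{Th:poc} by being the lift of $\vee$ that kills $e^*C^*(M)$ in either variable and kills $C_*(M)$, dualizing this characterization shows that $\widehat{\vee}^*$ is the extension by zero of $\vee^*$ along the splitting $H^*(\Lambda)\cong H^*(\Lambda,M)\oplus H^*(M)$ induced by $e$. Composing with the (sign-twisted) cohomology cross product, and recalling that the Goresky--Hingston product is by definition the composite of the cross product with $\vee^*$ on $H^*(\Lambda,M)$, gives that $\widehat{\oast}$ restricts to $\oast$ on the relative part and vanishes as soon as one factor lies in $e^*H^*(M)$. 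Because the original $\oast$ is already known to be trivial when paired against the constant loops, this is exactly the canonical extension by zero; I would spell this out at chain level using the formula $\widehat{\vee}A=(1-e_*)\times(1-e_*)\vee(p_*A)$ and its transpose.

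For part (i), both associativity and graded commutativity are inherited from the coproduct by the same duality. Concretely, coassociativity of $\widehat{\vee}$ — which follows from coassociativity of $\vee$ proved in \cite{GorHin} together with the fact that the $(1-e_*)$ projections are idempotent and commute past $\vee$ in each slot — dualizes to associativity of $\widehat{\oast}$; one only has to bookkeep that the sign twist $(-1)^{q(n-1)}$ in the definition is precisely the one making the cross products associate on the nose after the degree shift by $n-1$. Graded cocommutativity of $\widehat{\vee}$ (with the stated sign, coming from the symmetry of $\vee$ under $(\gamma,s)\mapsto(\gamma(s+\cdot),1-s)$ and the flip on $\Lambda\times\Lambda$) dualizes, together with the Koszul sign of the flip on cochains, to the asserted relation $[x]\widehat{\oast}[y]=(-1)^{(p+n-1)(q+n-1)}[y]\widehat{\oast}[x]$. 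The only real content here is sign discipline; I would defer the full sign verification to the appendix on signs referenced in the text.

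For part (ii), the Morse-theoretic inequality, the key point is compatibility of $\widehat{\oast}$ with the energy (length) filtration, which I would extract from the chain-level formula together with the corresponding property of $\vee$. The retraction $R_{GH}$ adds two short geodesic sticks whose length is bounded by $\eps$, so $\vee$ (and hence $\widehat{\vee}$) raises energy by at most a controlled amount; dually, $\widehat{\oast}$ shifts the support filtration in the correct direction. More precisely, if $[x]$ is supported on $\Lambda^{\ge L_1}$ and $[y]$ on $\Lambda^{\ge L_2}$, then, pairing $\widehat{\oast}$-products against homology classes supported on $\Lambda^{<L_1+L_2}$ and using that $\widehat{\vee}$ of such a class lands (up to the $\eps$-corrections, which one removes by the usual rescaling/limit argument over $\eps\to 0$, or rather by noting the statement is about the $\sup$) in the union of $\Lambda^{<L_1}\times\Lambda$ and $\Lambda\times\Lambda^{<L_2}$, forces the pairing to vanish; hence $[x]\widehat{\oast}[y]$ is supported on $\Lambda^{\ge L_1+L_2}$. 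Taking suprema gives $Cr([x]\widehat{\oast}[y])\ge Cr([x])+Cr([y])$.

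The main obstacle I anticipate is not conceptual but the $\eps$-dependence: unlike the critical-level statement in \cite{GorHin}, which was proved via a delicate limit, here the product is defined with a fixed $\eps$, so I must argue that the extra geodesic sticks of length $<\eps$ do not spoil the inequality — either by observing that $Cr$ is defined with a $\sup$ over \emph{all} $L$ and the filtration shift is by $L_1+L_2-O(\eps)$ for every $\eps$, or by a direct comparison of the $\widehat{\oast}$-operations for different $\eps$ showing they agree on cohomology. I would handle this exactly as the analogous length-filtration statement for $\widehat{\vee}$ is handled earlier in the paper, and then dualize.
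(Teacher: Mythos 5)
Your proposal reaches the statement by a detour through the coproduct side, where the paper instead reduces everything directly to properties of $\oast$ already established in \cite{GorHin}. For the extension-by-zero part and for (i), the two routes are essentially equivalent up to dualization: you want coassociativity and (signed) cocommutativity of $\widehat{\vee}$, obtained by showing that the $(1-i_*e_*)$ projectors commute past $\vee$ and that $\vee$ is cocommutative; the paper instead works on the product side and observes that, because $\oast$ already lands in $H^*(\Lambda,M)\cong\ker i^*$, one has $(1-e^*i^*)=\mathrm{id}$ on any product $[a]\oast[b]$, which collapses the associativity computation into a two-line identity, while graded commutativity is cited verbatim from \cite[Prop 9.2]{GorHin}. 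Your route would work but requires the extra step of establishing coassociativity/cocommutativity of $\vee$ (which \cite{GorHin} does not state explicitly---it proves the dual statements for $\oast$), so you would in any case have to dualize back; the paper's route is strictly shorter.

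The real discrepancy is in (ii). You attempt a direct geometric proof via the $R_{GH}$ sticks and an $\eps\to 0$ argument, and you yourself flag the unresolved $\eps$-dependence as the main obstacle; neither remedy you suggest closes it. The supremum in the definition of $Cr$ does not absorb a fixed additive deficit of order $\eps$, and a rescaling/limit over $\eps$ would reintroduce exactly the delicate analysis the paper's chain-level construction is designed to avoid. The paper sidesteps this entirely: writing $[x]=[x_0]+[x_1]$ along the splitting $H^*(\Lambda)\cong e^*H^*(M)\oplus H^*(\Lambda,M)$, the summand $[x_0]$ lives at energy level $0$, so $Cr([x])=Cr([x_1])=Cr\bigl((1-e^*i^*)[x]\bigr)$. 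One then plugs this into the already-proved inequality for the \emph{relative} product \cite[(1.7.3)]{GorHin}, applied to $(1-e^*i^*)[x]$ and $(1-e^*i^*)[y]$, and the statement falls out with no analysis of the sticks at all. So the missing ingredient in your sketch of (ii) is not a sharper geometric estimate but the simple observation that the splitting map preserves $Cr$; with that, the whole of (ii) is a one-step reduction to \cite[(1.7.3)]{GorHin}, and the $\eps$-dependence problem never arises.
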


Note that by definition, we have the following relation between the
operations $\widehat{\vee}$ and $\widehat{\oast}$: for any $x\in
C^{p}(\Lambda )$, $y\in C^q(\Lambda)$ and $Z\in C_{p+q+n-1}(\Lambda )$, 
\begin{equation*}
\langle x\widehat{\circledast}y,Z\rangle=\langle x\times y,%
\widehat{\vee }Z\rangle.
\end{equation*}

\medskip

\noindent \emph{Organization of the paper.} In Section~\ref{sec:setup} we
give a precise construction of chain representatives $\wedge, \vee$ and $%
\oast$ for the Chas-Sullivan and Goresky-Hingston products and coproduct. In
Section~\ref{sec:equivalence}, we prove that our definitions do indeed model
the earlier defined products and coproducts. This involves the explicit
construction of a tubular neighborhood of $\Lambda M\times_M\Lambda M$
inside $\Lambda^2$, see Proposition~\ref{TubCS}, and of $%
F_{(0,1)}:=e_I^{-1}(\Delta M)$ inside $\Lambda M\times (0,1)$, see
Proposition~\ref{coemb}. Theorem~\ref{Th:CSGH} is proved by combining
Propositions~\ref{prop:CSequiv} and~\ref{prop:coequiv}.  The section ends with  a detailed proof of the algebraic properties of the coproduct (Theorem~\ref{thm:coGHalg}). 
Section~\ref{sec:computing} starts by the geometric computation of the product and coproduct of appropriately transverse cycles (Propositions~\ref{prop:transCS} and Proposition~\ref{prop:strong4}). We then prove the support result for the coproduct (Theorem~\ref{thm:coproduct2}), the main ingredient of the second part of Theorem~\ref{Th:poc}. The section contains a number of example computations, including for the coproduct in the case of odd spheres (Proposition~\ref{prop:spheres1}).   
In Section~\ref{sec:extend}, we define the lifted product $\widehat{\vee}$ and coproduct $%
\widehat{\oast}$, and prove Theorem~\ref{Th:cop} (as Theorem~\ref%
{thm:liftedproduct1}) and finish the proof of Theorem~\ref{Th:poc} (Theorems~%
\ref{thm:extendedcoproduct} and~\ref{thm:trivial})---the vanishing statement
in Theorem~\ref{Th:poc} is shown in Section~\ref{sec:vanishing} to be
closely related to the almost complete vanishing of the so-called ``trivial
coproduct'', as pointed out by Tamanoi in \cite{Tam}.
The proof of Theorem~\ref{Thprop:int} is given in Section~\ref{sec:int} as Theorem~\ref{thm:int}. 
Finally, the appendix contains a section about the properties of the cap
product used in the paper, and a section about signs in the
intersection product.

In the present paper, all homology and cohomology is taken with integral
coefficients, and the chains and cochains are the singular chains and
cochains, unless explicitly otherwise specified.

\addtocontents{toc}{\SkipTocEntry}

\subsection*{Acknowledgements}

The present paper owes its existence to Ralph Cohen, who originally was to
be a coauthor; both authors would like to thank him for suggesting that we
work together, and thank him for his input in the first developments of this
project. Many people along the way helped us in our struggle with the fine
behavior of the loop space and its homology. We would like in particular to
thank S{\o }ren Galatius, John Klein, Florian Naef, Alexandru Oancea, Manuel Rivera, Karen Uhlenbeck and Zhengfang Wang
for helpful conversations, and the referees for insisting that we go a step further at several places in the paper.
In the course of this work, the first author was
supported by the Institute for Advanced Study and the second author by the
Danish National Sciences Research Council (DNSRC) and the European Research
Council (ERC)  under the European Union’s Horizon 2020 research and innovation programme (grant agreement No. 772960), as well as by the Danish National Research Foundation through
the Centre for Symmetry and Deformation (DNRF92) and the Copenhagen Centre for Geometry and Topology (DNRF151). The first author would
also like to thank the University of Copenhagen for its hospitality and
support.

\tableofcontents

\section{Chain level definitions of the loop products}

\label{sec:setup}

Assume that $M$ is a closed, compact, oriented Riemannian manifold of
dimension $n$. Fix $0<\varepsilon <\frac{\rho }{14}$ where $\rho $ is the
injectivity radius of $M$. In this section, we use an explicit tubular
neighborhood of the diagonal embedding $\Delta \colon M\hookrightarrow
M\times M$ to give chain-level definitions for the loop products. We start
by introducing our model of the free loop space.

\subsection{$H^1$-loops}

\label{H1loopssec}

A path $\gamma \colon \lbrack 0,1]\rightarrow M$ is of Sobolev class $H^{1}$ if
$\gamma$ and its weak derivative $\ga'$ are of class $L^2$. 
This means in particular that $\gamma ^{\prime }(t)$ is defined almost everywhere, and that 
the energy 
\begin{equation*}
E(\gamma )=\int_{0}^{1}|\gamma ^{\prime }(t)|^{2}dt
\end{equation*}%
is finite. The \textit{length} 
\begin{equation*}
\ell (\gamma )=\int_{0}^{1}|\gamma ^{\prime }(t)|dt
\end{equation*}%
is also defined and finite for $H^{1}$-paths, and 
\begin{equation*}
E(\gamma )\geq (\ell (\gamma ))^{2},
\end{equation*}%
with equality if and only if $\gamma $ has constant speed. For technical
reasons the best function on path spaces is the square root of the energy
(which is also the $L^{2}$-norm) 
\begin{equation*}
\mathcal{L}(\gamma):=\sqrt{E(\gamma )}=\left( \int_{0}^{1}|\gamma ^{\prime
}(t)|^{2}dt\right) ^{\frac{1}{2}},
\end{equation*}%
which has units of length, but (like energy) it extracts a penalty for bad
parametrization. The reader who thinks of $\mathcal{L}$ as the length will
not go far wrong.

Our model for the loop space of $M$ will be the \textit{closed} $H^{1}$-\textit{paths}, also called $H^{1}$-\textit{loops}: 
\begin{equation*}
\Lambda=\Lambda M=\{\gamma \colon[0,1]\rightarrow M\ |\ \gamma \text{ is }%
H^{1}\text{ and } \gamma (0)=\gamma (1)\}\text{.}
\end{equation*}
Note that the inclusions 
\begin{equation*}
C^{\infty }\text{-paths }\subset \text{ piecewise }C^{\infty }\text{-paths }%
\subset H^{1}\text{-paths }\subset C^{0}\text{-paths}
\end{equation*}%
and 
\begin{equation*}
C^{\infty }\text{-loops }\subset \text{ piecewise }C^{\infty }\text{-loops }
\subset H^{1}\text{-loops }\subset C^{0}\text{-loops}
\end{equation*}
are continuous and induce homotopy equivalences (see \cite[Thm 1.2.10]{Kli78}
for the last inclusion, and e.g.~\cite[Thm 4.6]{Sta09}). 
%\note{ref to Palais instead?} 
The space $\Lambda$ of $H^1$-loops on $M$ is a Hilbert manifold \cite[Thm
1.2.9]{Kli78}, 
such that the energy is defined, and satisfies Condition C of Palais and Smale \cite{PS64}
(see \cite{Eli72}). 
Thus $\Lambda$ is appropriate for infinite dimensional Morse theory.

\medskip

A \textit{constant speed path, }or path \textit{parametrized proportional
to arc length} is an $H^{1}$-path $\gamma $ with $\left\vert \gamma ^{\prime
}(t)\right\vert $ constant where it is defined.  In this case $\left\vert
\gamma ^{\prime }(t)\right\vert =\mathcal{L(\gamma )}$ for all $t$ where the
derivative is defined.  We denote by $\overline{\Lambda }\subset \Lambda $
the space of constant speed loops.  Note that $\mathcal{L=\ell }$ on $%
\overline{\Lambda }$. Anosov \cite[Thm 2]{Ano80} has proved that there is a
continuous map 
\[
\Psi \colon \Lambda \longrightarrow \overline{\Lambda }
\]%
that takes an $H^{1}$-loop $\gamma \in \Lambda M$ to the constant speed
reparametrization of $\gamma $, exhibiting $\overline{\Lambda }$ as a
deformation retract of $\Lambda $. (See Theorem 3 in the same paper.) This
model of the loop space of $M$ will at times be useful for computations.

\subsection{Concatenation of paths}

\label{sec:concat}

If $\gamma ,\delta \colon \lbrack 0,1]\rightarrow M$ are paths with $\gamma
(1)=\delta (0)$, and $s \in (0,1)$ then the \emph{concatenation at time }%
$s:$
\[
\gamma \ast _{s}\delta \colon \lbrack 0,1]\rightarrow M
\]%
is defined by 
\[
(\gamma \ast _{s}\delta )(t)=\left\{ 
\begin{array}{c}
\gamma (\frac{t}{s})\text{ if }0\leq t\leq s \\ 
\delta (\frac{t-s}{1-s})\text{ if } s\leq t\leq 1%
\end{array}%
\right\}. 
\]

If $\gamma ,\delta \colon \lbrack 0,1]\rightarrow M$ are paths with $\gamma
(1)=\delta (0)$, then the \emph{optimal concatenation} 
\[
\gamma \star \delta \colon \lbrack 0,1]\rightarrow M
\]%
is given by 
\[
\gamma \star \delta =\gamma \ast _{\sigma}\delta 
\]%
where 
\[
\sigma =\frac{\mathcal{L}(\gamma )}{\mathcal{L}(\gamma )+\mathcal{L}(\delta )}
\]%
This concatenation has the property of being strictly associative, and has
minimal energy among reparametrizations of the form $\gamma \ast
_{s}\delta $. When restricted to based loops, the constant loop becomes
also a strict unit with this concatenation.

\subsection{Tubular neighborhood of the diagonal in $M$ and Thom classes}

\label{tubsec}

The normal bundle of the diagonal $\Delta \colon M\hookrightarrow M\times M$ is
isomorphic to the tangent bundle $TM$ of $M$. We will throughout the paper
identify $TM$ with its open disc bundle of radius $\varepsilon$, where we recall that $\eps$ is chosen small with respect to the injectivity radius $\rho$ of $M$. In particular, the exponential map is defined on vectors of $TM$.

We make
here the following explicit choice of tubular neighborhood for $\Delta $:
let
\begin{equation}\label{equ:TMeps}
  TM\equiv TM_\eps:= \{(x,V)\ |\ x\in M,V\in T_{x}M,|V|<\varepsilon \}
  \end{equation}
and 
\begin{equation}
\nu_M\colon TM \longrightarrow M^{2}  \label{nuM}
\end{equation}%
be the map taking $(x,V)$ to $(x,\exp _{x}(V))$, where $\exp
_{x}\colon T_{x}M\rightarrow M$ denotes the exponential map at the point $x\in M$.
Let $U_{M}=U_{M,\varepsilon }\subset M^{2}$ be the neighborhood of the
diagonal defined by 
\begin{equation*}
U_{M}=U_{M,\eps}:=\{(x,y)\in M^{2}\ |\ |x-y|<\varepsilon \}.
\end{equation*}%
Then the map $\nu _{M}$ has image $U_{M}$ and is a homeomorphism onto its
image. Let 
\begin{equation*}
\Thom(TM)= DTM/STM
\end{equation*}%
denote the Thom space of $TM$, its $\varepsilon _{0}$-disc bundle modulo
its $\varepsilon _{0}$-sphere bundle, where $0<\varepsilon _{0}<\varepsilon 
$. The \emph{Thom collapse map} associated to the tubular neighborhood $\nu
_{M}$ is the map 
\begin{equation*}
\kappa _{M}\colon M^{2}\longrightarrow \Thom(TM)
\end{equation*}%
defined by 
\begin{equation*}
\kappa _{M}(x,y)=\left\{ 
\begin{array}{ll}
(x,\exp _{x}^{-1}(y)) & |x-y|<\varepsilon _{0} \\ 
\ast & |x-y|\geq \varepsilon _{0}%
\end{array}%
\right.
\end{equation*}%
where $\ast $ is the basepoint of $\Thom(TM)$. \ Note that 
\begin{equation*}
\nu _{M}\colon TM \longrightarrow M^{2} \ \ \ \ \ \ \ \text{and}\ \ \ \ \ \
\kappa _{M}\colon M^{2}\longrightarrow \Thom(TM)
\end{equation*}%
are inverses of one another when restricted to the open $\varepsilon_0$%
-disc bundle $TM_{\varepsilon_0}$ and the $\varepsilon_0$-neighborhood $%
U_{M,\varepsilon_0}$ of the diagonal in $M^2$. 
We pick a cochain representative 
\begin{equation*}
u_{M}\in C^n(TM,TM_{\varepsilon_0}^c)
\end{equation*}
of the Thom class of the tangent bundle $TM$, which in particular means that $\kappa_M^*u_M$ vanishes on
chains supported on the complement of $U_{M,\varepsilon _{0}}$. We will in
the paper write 
\begin{equation*}
\tau_M:=\kappa_M^*u_M\in C^n(M^2,U_{M,\varepsilon_0}^c).
\end{equation*}
Its defining property is that $$\tau_M\cap [M\x M]=\De_*[M]\in H_n(M\times M).$$
By abuse of notations, we will also call $\tau_M$ its restriction to $C^n(U_{M},U_{M,\varepsilon_0}^c)$.

\begin{rem}\label{rem:orientations}
   We need to orient the normal bundle of $\De(M)$ inside $M\x M$ in such a way that the isomorphism $N\De(M)\oplus T\De(M)\cong T(M\x M)|_{\De(M)}$ preserves the orientation, so that capping with the Thom class does give the relation $\tau_M\cap [M\x M]=\De_*[M]$ without any sign. 
 One can check that this means that under the identification $N(\Delta M)\cong TM$,  
 the fibers of the normal bundle are oriented as $(-1)^n$ times the canonical orientation of those in the tangent bundle. Such details will be relevant when doing computations in
 Section~\ref{sec:computing}.
\end{rem}

The cap product plays a crucial role in our construction of the loop
(co)products. We make precise in Appendix~\ref{app:cap} which maps and
associated cap products we use, and give their relevant 
properties.  
In Example~\ref{ex:cap}, we detail how to define the cap product as a map 
\begin{equation*}
[\tau _{M}\cap]\colon C_{\ast }(M^{2})\longrightarrow C_{\ast}(U_{M}).
\end{equation*}%
The ingredients are the usual formula for the cap product, our tubular
neighborhood of $\Delta M$ in $M^{2}$, and a chain map from standard singular chains to ``small simplices":
for $A\in C_{\ast }(M^2)$,
$$[\tau_M\cap ](A)=\tau_M\cap \rho(A)$$
where $\rho\colon C_*(M^2)\to C_*(M^2)$ is a chain map that consistently takes barycentric subdivisions of the simplices, in such a way that 
 $\rho(A)$ is homologous to $A$ and every simplex $\sigma $ in $\rho(A)$ has
support in either $U_M$ or in $U_{M,\varepsilon _{0}}^{c}$ (see Section~\ref{sec:smallsimp}). Capping
with the Thom class kills all the simplices with support in $U_{M,\varepsilon _{0}}^{c}$.

\subsection{The homology product}

\label{sec:newCS}

Recall from Section~\ref{H1loopssec} that $\Lambda=\Lambda M$ denotes the
space of $H^1$-loops in $M$. We denote by 
\begin{equation*}
e\colon \Lambda\to M
\end{equation*}
the evaluation at $0$, that is the map taking a loop $\gamma$ to its
evaluation $e(\gamma)=\gamma(0)$. %at time $t\in [0,1]$.

Let $\Lambda \times _{M}\Lambda \subset \Lambda ^{2}$ be the
\textquotedblleft figure-eight space", the subspace 
\begin{equation*}
\Lambda \times _{M}\Lambda =\{(\gamma ,\lambda )\in \Lambda ^{2}\ |\ \gamma
_{0}=\lambda _{0}\},
\end{equation*}%
where $\gamma _{0}=\gamma (0)$ and $\lambda _{0}=\lambda (0)$ are the
basepoints of the loops, and consider its neighborhood inside $\Lambda ^{2}$
defined by 
\begin{equation*}
U_{\CS}=U_{\CS,\varepsilon }:=\{(\gamma ,\lambda )\in \Lambda ^{2}\ |\ |\gamma
_{0}-\lambda _{0}|<\varepsilon \}=(e\times e)^{-1}(U_{M}).
\end{equation*}%
We define the retraction 
\begin{equation*}
R_{\CS}\colon U_{\CS}\longrightarrow \Lambda \times _{M}\Lambda
\end{equation*}%
by 
\begin{equation*}
R_{\CS}(\gamma ,\lambda )=(\gamma \text{ },\text{ }\overline{\gamma
_{0}\lambda_{0}}\;\star \;\lambda \;\star \;\overline{\lambda _{0}\gamma
_{0}})
\end{equation*}%
where, for $x,y\in M$ with $|x-y|<\rho $, $\overline{xy}$ denotes the
minimal geodesic path $[0,1]\rightarrow M$ from $x$ to $y$, and $\star $ is
the optimal concatenation of paths defined above. That is, we add small
\textquotedblleft sticks" $\overline{\gamma _{0}\lambda _{0}}$ \ and $%
\overline{\lambda _{0}\gamma _{0}}$ to $\lambda$ to make it have the same
basepoint as $\gamma$. (See Figure~\ref{fig:sticks}(a).) We will see in Lemma~\ref{retract} below that $R_{\CS}$ is a homotopy inverse to the inclusion $\La\x_M\La\inc U_{\CS}$.

Define 
\begin{equation*}
\tau _{\CS}:=(e\times e)^{\ast }\tau _{M}\in C^{n}(\Lambda \times \Lambda,U_{\CS,\varepsilon _{0}}^{c})
\end{equation*}%
as the pulled-back Thom class $\tau _{M}$ by the evaluation map. The class $\tau_{\CS}$ has support in $U_{\CS}$, and capping with this
class thus defines a map 
\begin{equation*}
[\tau _{\CS}\,\cap] \;\colon C_{\ast }(\Lambda \times \Lambda )\longrightarrow
C_{\ast -n}(U_{\CS}).
\end{equation*}%
Just like in the case of $M\x M$ above, given $A\in C_{\ast }(\Lambda \times
\Lambda )$, the maps has the form $$[\tau_{\CS}\cap](A)=\tau_{\CS}\cap \rho(A)$$ for $\rho(A)$ a chain homologous to $A$ with the property that
each simplex in $\rho(A)$ is supported in either $U_{\CS}$ or $U_{\CS,\varepsilon _{0}}^{c}$ so that  $\tau _{\CS}\cap\rho A$ has support in $U_{\CS}$. 
(See Appendix~\ref{app:cap}, in particular equation (\ref{equ:capdef}) and Example~\ref{ex:cap}, for more details.)

Note that the retraction $R_{\CS}$ is well-defined on the image of $[\cap \tau_{\CS}]$. Hence we can make the following definition. 

\begin{Def}
\label{def:stickypro} We define the loop product $\wedge_{\Thom}$ as the map 
\begin{equation*}
\wedge_{\Thom}\colon C_p(\Lambda)\otimes C_q(\Lambda)\longrightarrow
C_{p+q-n}(\Lambda)
\end{equation*}
given by 
\begin{equation*}
A\wedge_{\Thom} B=\concat \left( R_{\CS}([\tau _{\CS} \cap] (A\times B))\right) 
\end{equation*}
where $\concat\colon\Lambda\times_M \Lambda\to \Lambda$ is the concatenation $\star$.
\end{Def}

The map $(A,B)\rightarrow A\wedge_{\Thom} B$ is a composition of chain maps,
and thus descends to a product on homology:%
\begin{equation*}
\wedge_{\Thom}\colon H_{p}(\Lambda )\otimes H_{q}(\Lambda )\rightarrow
H_{p+q-n}(\Lambda )
\end{equation*}

\smallskip

\emph{Note that this is an integral singular chain level definition of a
loop product, and that it does not involve any infinite dimensional tubular
neighborhood, but only a simple retraction map and the pullback by the
evaluation map of the Thom class of $TM$. Indeed the retraction map
interacts only weakly with the loops in $U_{\CS}$, adding ``sticks" that
depend only on the basepoints of $\gamma$ and $\delta$. We will show in
Proposition~\ref{prop:CSequiv} below that this definition of the
Chas-Sullivan product coincides in homology with that of Cohen and Jones in 
\cite{CohJon}. We also show in Proposition~\ref{prop:transCS} that it intersects and then concatenates transverse chains in the way originally envisionned by Chas and Sullivan.}

\medskip

We emphasized in the notation $\wedge_{\Thom}$ of the product that it is
defined using a Thom-Pontrjagin like construction. To have nice algebraic
properties, such as associativity and graded commutativity in appropriately degree-shifted homology, one needs to
correct the above product by a sign: 

\begin{Def}\label{def:CSalg}
The \emph{algebraic} loop product $\wedge$ is the map 
\begin{equation*}
\wedge\colon C_p(\Lambda)\otimes C_q(\Lambda)\longrightarrow
C_{p+q-n}(\Lambda)
\end{equation*}
given by 
\begin{equation*}
A\wedge B:=(-1)^{n-np}A\wedge_{\Thom}B.
\end{equation*}
\end{Def}

The product is homotopy commutative via a geometric homotopy that rotates the target loop, while the associativity relation is closer to holding on chains, though it does not hold on the nose on the chain level in particular  because of the maps $[\tau_{\CS}\cap]$.  (See Theorem~\ref{thm:CSalg}  and Proposition~\ref{prop:intalg}.)

The sign $(-1)^n$ is a normalization, that makes $[M]$ the unit of the product (see Example~\ref{ex:unit} for the sign computation), while the sign $(-1)^{np}$ could be associated to a degree $n$ suspension. Appendix~\ref{app:signs} gives details about the corresponding sign issue for the intersection product.

We note that the idea of using ``sticks'' instead of deforming the loops can
already be found in \cite{Tam}, where the same sign convention is also taken.

\subsection{The homology coproduct}\label{sec:newGH}

Let $\F\subset \Lambda \times I$ be the subspace  
\begin{equation*}
\F =\{(\gamma ,s)\in \Lambda \times I\ |\ \gamma _{0}=\gamma (s)\},
\end{equation*}%
with $\gamma _{0}=\gamma (0)$ as above, and consider its neighborhood 
\begin{equation*}
U_{\GH}=U_{\GH,\varepsilon }:=\{(\gamma ,s)\in \Lambda \times I:|\gamma
_{0}-\gamma (s)|<\varepsilon \}=(e_{I})^{-1}(U_{M})\ \subset \
\Lambda \times I
\end{equation*}%
 where 
\begin{equation*}
e_{I}\colon \Lambda \times I\longrightarrow M^{2}
\end{equation*}%
is our notation for the map that takes a pair $(\gamma ,s)$ to the pair $%
(\gamma _{0},\gamma (s))$. Define the retraction 
\begin{equation*}
R_{\GH}\colon U_{\GH}\longrightarrow \F 
\end{equation*}%
by 
\begin{equation*}
R_{\GH}(\gamma ,s)=\Big(\big(\,\gamma \lbrack 0,s]\text{ }\star \text{ }%
\overline{\gamma (s)\gamma _{0}}\text{ }\big)\ast _{s}\big(\,\overline{%
\gamma _{0}\gamma (s)}\star \text{ }\gamma \lbrack s,1]\,\big)\ ,\ s\Big)
\end{equation*}%
where $\gamma [a,b]$ denotes the restriction of $\gamma $ to the
interval $[a,b]\subset [0,1]$ precomposed by the scaling map $%
[0,1]\rightarrow [a,b]$, and where $\overline{xy}$ denotes again the
shortest geodesic from $x$ to $y$. (See Figure~\ref{fig:sticks}(b).) Note
that the retraction is well-defined also when $s=0$ and $s=1$; in fact $%
R_{\GH}(\gamma ,s)=(\gamma ,s)$ for any $\gamma $ whenever $s=0$ or $s=1$ or whenever $\ga$ is constant, that is it restricts to the identity on
$$\B:=M\x I\cup \La\x \del I \ \subset \ \F\ \subset \ U_{\GH}.$$
We will see in Lemma~\ref{retract}  that $R_{\GH}$ is a homotopy inverse to the inclusion
$$(\F,\B)\inc (U_{\GH},\B).$$

%Small simplices on $M\times M$ lift to $\Lambda \times I$; 
Just as above, given $A\in C_{\ast }(\Lambda \times I)$ we have a homologous
chain $\rho(A)\in C_{\ast }(\Lambda \times I)$ with the property that
each simplex  in $\rho(A)$ is supported in either $U_{\GH}$ or in $U_{\GH,\varepsilon _{0}}^{c}$, so that  $e_{I}^{\ast }\tau _{M}\cap
\rho(A)$ has support in $U_{\GH,\varepsilon }$. Thus if 
\begin{equation*}
\tau _{\GH}:=(e_{I})^{\ast }\tau _{M}\in C^{n}(\Lambda \times
I,U_{\GH,\varepsilon _{0}}^{c})
\end{equation*}%
is the pulled-back Thom class, it is meaningful to apply the retraction map $%
R_{\GH}$ to $$[\tau_{\GH}\cap](A)=\tau _{\GH}\cap \rho(A)\in C_*(U_{\GH}).$$

Let 
\begin{equation*}
\cut\colon \F \rightarrow \Lambda \times \Lambda
\end{equation*}
be the cutting map, defined by $\cut(\gamma ,s)=(\gamma[0,s],\gamma[s,1])$.

\begin{Def}
\label{def:stickyco} We define the loop coproduct $\vee_{\Thom}$ as the map  
\begin{equation*}
\xymatrix{C_*(\La,M) \ar[r]^-{\vee_{\Thom}} & C_*(\La,M)\ot C_*(\La,M)
%  C_*(\La\x\La)/\big(C_*(M\x \La)+C_*(\La\x M)\big)
}
\end{equation*}
of degree $1-n$ given by 
\begin{equation*}
\vee_{\Thom} A=\AW (\cut (R_{\GH}([\tau _{\GH} \cap] (A\times I))));
\end{equation*}
that is, $\vee_{\Thom}$ is the composition of the following maps of relative chains: 
\begin{eqnarray*}
C_k(\La,M)=C_k(\Lambda)/C_k(M) &\overset{\times I}{\longrightarrow }&C_{k+1}(\Lambda
\times I)/\big(C_{k+1}(M\times I) +C_{k+1}(\Lambda \times \del I)\big) \\
& \xrightarrow{[\tau_{\GH}\cap]}&C_{k+1-n}(U_{\GH})/\big(C_{k+1-n}(M\times I) +C_{k+1-n}(\Lambda \times \del I)\big) \\
& \xrightarrow{R_{\GH}}&C_{k+1-n}(\F )/\big(C_{k+1-n}(M\times I) +C_{k+1-n}(\Lambda \times \del I)\big) \\
& \xrightarrow{\cut}&C_{k+1-n}(\Lambda \times \Lambda)/\big(C_{k+1-n}(M\times \Lambda) + C_{k+1-n}(\Lambda \times M)\big)\\
                     & \xrightarrow{\AW}&\! \! \bigoplus_{p+q=k+1-n}\!\! C_p(\Lambda,M) \otimes C_q(\Lambda,M), 
\end{eqnarray*}
where the last map is the Alexander-Whitney quasi-isomorphism. 
\end{Def}

\emph{As for the product,  Definition \ref{def:stickyco} is a chain level definition of the coproduct, and involves only a simple
retraction map and the pullback via the exponential map of the Thom class
from $TM$. It is a composition of chain maps and descends to a map $%
H_{k}(\Lambda ,M)\rightarrow H_{k+1-n}(\Lambda \times \Lambda ,M\times
\Lambda \cup \Lambda \times M)$.}

We will show in Proposition~\ref{prop:coequiv} below that this definition
coincides on homology with that given by Goresky and Hingston in \cite{GorHin}.

\begin{rem} \label{rem:+U}
(1) Recall that the Alexander-Whitney map
$$\AW\colon  C_*(\La\x \La)\sta{\sim}{\rar} C_*(\La)\otimes C_*(\La)$$
is a homotopy inverse to the cross product, or Eilenberg-Zilber map. It induces a quasi-isomorphism 
$$C_*(\La\x \La)/\big(C_*(M\times \Lambda) + C_*(\Lambda \times M)\big) \sta{\simeq}{\rar} C_*(\La,M)\otimes C_*(\La,M)$$
as it takes $C_*(M\x\La)$ quasi-isomorphically to $C_*(M)\ot C_*(\La)$ and $C_*(\La\x M)$ to $C_*(\La)\ot C_*(M)$. 
The complex $C_*(M\times \Lambda) + C_*(\Lambda \times M)$ is a subcomplex of the complex $C_*(M\times \Lambda
\cup \Lambda \times M)$, and the quotient map 
\begin{equation*}
C_{*}(\Lambda \times \Lambda)/\big(C_*(M\times \Lambda) + C_*(\Lambda
\times M)\big) \twoheadrightarrow C_{*}(\Lambda \times
\Lambda)/C_*(M\times \Lambda \cup \Lambda \times M)
\end{equation*}
is a quasi-isomorphism.
(This can for example be seen by replacing $M$ by the homotopy equivalent subspace 
$\Lambda^{<\alpha}$ of small loops in $\Lambda$, with $\alpha<\rho$, and
using ``small simplices'' associated to the covering $U_1=\Lambda\times
\Lambda^{<\alpha}$ and $U_2=\Lambda^{<\alpha}\times \Lambda$ of $\Lambda\times \Lambda^{<\alpha}\cup \Lambda^{<\alpha}\times \Lambda$.)
In particular, both the above complexes, and hence also the target complex $C_*(\La,M)\ot C_*(\La,M)$ of the coproduct, have homology the relative homology group
$$H_{*+1-n}(\Lambda \times \Lambda, M\times \Lambda \cup \Lambda \times M).$$

\smallskip

\noindent
(2) If we take coefficients in a field $\mathbb{F}$, or if the homology is
torsion free, then we can use the K\"unneth isomorphism to
get a coproduct 
\begin{equation*}
\vee_{\Thom} \colon H_{*}(\Lambda ,M;\mathbb{F})\ \longrightarrow\ H_{*+1-n}(\Lambda \times \Lambda, M\times \Lambda \cup \Lambda \times M;\mathbb{F})\ \cong\!\!\!\! \underset{p+q=*+1-n}{%
\bigoplus}\!\!\!\! H_{p}(\Lambda ,M;\mathbb{F})\otimes H_{q}(\Lambda ,M;\mathbb{F}).
\end{equation*}%
However,  there is in general no K\"unneth isomorphism, and indeed no natural
map from $H_{\ast }(U\times V)\cong H_*(C_*(U)\otimes C_*(V))$ to $H_{\ast }(U)\otimes H_{\ast }(V)$.

\smallskip

\noindent
(3) The use of relative chains is dictated by the fact that we cross with an interval, which only induces a chain map in relative chains. One could instead cross with a circle, but in that case the cutting map will take value in $\La\x \La/(\Z/2)$, with $\Z/2$ exchanging the loops. Indeed, the cup map at times  $0=1$  would not know anymore which is the left and which is the right loop. 
\end{rem}

\medskip

Just like the homology product, the above definition caping with a Thom class does not have good  coassociative and cocommutative properties.
We will show in Theorem~\ref{thm:coGHalg}  that the same sign change as for the product gives better algebraic properties. This justifies the following definition:

\begin{Def}
The \emph{algebraic} loop coproduct $\vee$ is the map 
\begin{equation*}
\vee\colon C_p(\Lambda)\longrightarrow C_*(\La,M)\ot C_*(\La,M)   
\end{equation*}
given by 
\begin{equation*}
\vee A:=\sum (-1)^{n+np}A^0_{p}\ot  A^1_q.
\end{equation*}
for $A_p^0\ot A_q^1$ the degree $(p,q)$ component of $\vee_{\Thom}A$. 
\end{Def}

Our sign change for the algebraic coproduct differs from the sign chosen in \cite{GorHin}. It is justified by Theorem~\ref{thm:coGHalg}, see also Remark~\ref{rem:HH}, and has the advantage of being the same sign change as for the product, with results like Theorem~\ref{thm:trivial} working both with Thom signs and algebraic signs.
The coproduct was defined, just like the product, as a lift of the Thom-signed intersection product. However in the case of the coproduct, it is more difficult to directly deduce that sign change from that of the intersection product.

\subsection{The cohomology product}

The homology coproduct $\vee_{\Thom}$ defined above induces a dual map 
\begin{equation*}
\vee_{\Thom} ^{\ast }\colon C^*(\Lambda,M) \otimes C^*(\Lambda,M)\  \longrightarrow\ \ C^{*}(\Lambda,M)
\end{equation*}%
of degree $n-1$.

\begin{Def}
\label{def:cast} The product 
\begin{equation*}
\circledast_{\Thom}\colon C^{p}(\Lambda ,M)\otimes C^{q}(\Lambda ,M)\rightarrow
C^{p+q+n-1}(\Lambda ,M)
\end{equation*}
is defined by 
\begin{equation*}
a\circledast_{\Thom} b=\vee_{\Thom}^*(a\otimes b)
\end{equation*}
where $\vee_{\Thom}$ is given in Definition~\ref{def:stickyco}.
\end{Def}

Applying homology, this defines a cohomology product 
\begin{equation*}
\circledast_{\Thom}\colon H^{p}(\Lambda ,M)\otimes H^{q}(\Lambda ,M) \sta{\x}{\longrightarrow} H^{p+q}(\La\x \La,M\x\La\cup \La\x M) \xrightarrow{\vee_{\Thom}^*}
H^{p+q+n-1}(\Lambda ,M)
\end{equation*}
dual to the homology coproduct, but which is defined with integer
coefficients on the tensor product of the cohomology groups as there is no K\"unneth formula issue here. We will show in Theorem~\ref{thm:cocoequ}
 that this product is equivalent on cohomology (up to sign) to the
cohomology product defined in \cite{GorHin}.

\medskip

Just like for the loop product and coproduct, the cohomology product needs to be corrected
by a sign for better associativity and commutativity properties
(see Theorem~\ref{thm:coGHalg}).

\begin{Def}
\label{def:castalg} The \emph{algebraic} product 
\begin{equation*}
\circledast\colon C^{p}(\Lambda ,M)\otimes C^{q}(\Lambda ,M)\rightarrow
C^{p+q+n-1}(\Lambda ,M)
\end{equation*}
is defined by 
\begin{equation*}
a\circledast b=(-1)^{n+np}a\circledast_{\Thom}
b=\vee^*\!(a\times b).
\end{equation*}
\end{Def}

We will show in Theorem~\ref{thm:coGHalg} that this product is a graded associative and anti-commutative product of degree $-1$ in degree-shifted cohomology. It is not unital (see eg.~the computation \cite[15.3]{GorHin} in the case of odd spheres).
On cochains, the product is homotopy commutative (after degree-shifting) 
via a geometric homotopy that flips the interval, explaining that it is anti-commutative rather than commutative in cohomology, and the associativity relation does not hold exactly on the nose on cochains because of the maps $[\tau_{\GH}\cap]$.  See Theorem~\ref{thm:coGHalg}  for more details.

\section{Equivalence of the new and old definitions}

\label{sec:equivalence}

Cohen and Jones' approach to defining the loop product, later also used in 
\cite{GorHin} for the coproduct, was via collapse maps associated to certain
tubular neighborhoods in loop spaces. To show that our definition is
equivalent with these earlier definitions, we will write down explicit such
tubular neighborhoods, and show that the associated collapse maps can be
modeled by sticks as described above. Chas and Sullivan originally defined
their product in \cite{CS99} by intersecting chains. This other approach has
been made precise by several authors (see e.g.~\cite{Cha05,Mei09,Lau11}),
and shown to be equivalent to the tubular neighborhood approach in e.g.~\cite%
{Mei09}. We will give in Section~\ref{sec:computing} a direct proof that, under appropriate transversality condition, the product and coproduct in our definition are computed as originally envisionned, by appropriately intersecting chains, and then applying the relevant concatenation or cutting map. 

\subsection{Pushing points in $M$}

We define a map that allows us to \textquotedblleft push
points\textquotedblright\ continuously in $M$. This map will allow us in
later sections to lift the tubular neighborhood of the diagonal to tubular
neighborhoods of analogous subspaces of the loop space of $M$ or related
spaces.

\bigskip

\begin{lem}
\label{H} Let $M$ be a Riemannian manifold with injectivity radius $\rho$.
Let $U_{M,\rho}\subset M\times M$ be the set of points $(u,v)$ where $%
|u-v|<\rho$. There is a smooth map 
\begin{eqnarray*}
h\colon  U_{M,\rho}\times M\subset M\times M \times M &\longrightarrow &M
\end{eqnarray*}%
with the following properties:

\begin{enumerate}[(i)]

\item For each $(u,v)\in U_{M,\frac{\rho}{14}}$, $h(u,v):=h(u,v,\_)\colon M\to M$
is a diffeomorphism.

\item $h(u,v)(u)=v$ if $|u-v|<\rho /14$.

\item $h(u,v)(w)=w$ if $|w-u|\ge \rho/2$.

\item $h(u,u)(w)=w$.
\end{enumerate}
So $h(u,v)$ takes $u$ to $v$ if they are close enough and does not move $w$
if $w$ is far from $u$.
\end{lem}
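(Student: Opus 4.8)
The statement to prove is Lemma~\ref{H}: the existence of a smooth "pushing" map $h\colon U_{M,\rho}\times M\to M$ satisfying properties (i)--(iv). The plan is to build $h(u,v)$ as the time-one flow of a compactly supported vector field on $M$ that, near $u$, looks like the constant-coefficient "translation" vector field in normal coordinates carrying $u$ toward $v$, and that is damped to zero outside a ball of radius $\rho/2$ around $u$. Concretely, first I would work in geodesic normal coordinates centered at $u$: since $|u-v|<\rho$, the point $v$ has normal coordinates $\xi = \exp_u^{-1}(v)\in T_uM$ with $|\xi|<\rho$. Choose a smooth bump function $\beta\colon[0,\infty)\to[0,1]$ with $\beta\equiv 1$ on $[0,\rho/4]$ and $\beta\equiv 0$ on $[\rho/2,\infty)$, and define a vector field $X_{u,v}$ on $M$ which in the normal chart $\exp_u$ equals $w\mapsto \beta(|w|)\,\xi$ (a damped constant vector field, transported via $d\exp_u$), extended by zero outside the ball of radius $\rho/2$ about $u$. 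This field is smooth, compactly supported, and depends smoothly on the parameters $(u,v)$; let $h(u,v)$ be its time-one flow $\Phi^1_{X_{u,v}}$.

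Next I would verify the four properties. Property (iv), $h(u,u) = \id$, is immediate since $\xi = 0$ forces $X_{u,u}=0$. Property (iii), $h(u,v)(w)=w$ for $|w-u|\ge\rho/2$: the vector field vanishes identically outside the ball $B_{\rho/2}(u)$, and this ball is flow-invariant (the field is zero on and beyond its boundary), so points starting outside stay fixed. Property (i), that $h(u,v)$ is a diffeomorphism: the time-one flow of a complete (here, compactly supported, hence complete) smooth vector field is always a diffeomorphism of $M$; the only subtlety is smoothness in $(u,v)$, which follows from smooth dependence of solutions of ODEs on parameters, using that $(u,v)\mapsto X_{u,v}$ is a smooth section of $TM$ over the parameter space. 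Property (ii), $h(u,v)(u)=v$ when $|u-v|<\rho/14$: here I need the flow line starting at $u$ to reach $v$ at time one. In the normal chart, as long as the trajectory stays inside $B_{\rho/4}(u)$ the vector field is the genuine constant field $\xi$ pushed forward by $d\exp_u$, but the flow of that is \emph{not} literally straight-line motion in $M$ unless the chart is the identity — so the honest fix is to instead define the field in normal coordinates as the \emph{coordinate}-constant field $\partial\mapsto \beta(|w|)\xi$ (i.e. work in the coordinate chart, not push forward the Euclidean field by $d\exp_u$), whose flow on $B_{\rho/4}(u)$ is exactly $w\mapsto w + t\xi$ in coordinates; then at $t=1$ the point $u$ (coordinate $0$) maps to $\xi$, i.e. to $v$, provided the whole segment $\{t\xi : t\in[0,1]\}$ stays in the region where $\beta\equiv 1$, which holds precisely because $|\xi|=|u-v|<\rho/14<\rho/4$. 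The constant $\rho/14$ (as opposed to, say, $\rho/4$) gives margin for the later gluing of such maps along loops, but for this lemma any bound below $\rho/4$ suffices.

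The main obstacle is the tension between property (ii), which wants $h(u,v)$ to be an \emph{exact} translation near $u$ (so that iterating/composing these maps along a loop behaves predictably), and smoothness/globality, which wants the construction to be chart-independent and compactly supported. The resolution is exactly the choice above: define the displacing vector field \emph{in the normal coordinate chart} as a damped coordinate-constant field, so that on the undamped core ball the flow is literally coordinate translation, while the damping makes it extend smoothly and trivially to all of $M$. One should double-check that the normal chart $\exp_u$ is a diffeomorphism on a ball of radius $>\rho/2$ — this is exactly the definition of injectivity radius $\rho$, so $\exp_u$ is a diffeomorphism on $B_\rho(0)\subset T_uM$, comfortably containing the radius-$\rho/2$ support. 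A further routine point is the smooth dependence of $\exp_u^{-1}(v)$ and of the chart on $u$, which is standard for Riemannian manifolds. Assembling these, $h$ is smooth on $U_{M,\rho}\times M$ and satisfies (i)--(iv), with the stronger conclusions (i),(ii) holding on the smaller set $U_{M,\rho/14}$ as stated.
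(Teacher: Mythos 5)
Your construction is correct, but it takes a genuinely different route from the paper. The paper defines $h(u,v)$ by the explicit closed formula $h(u,v)(w)=\exp_u\bigl(\exp_u^{-1}(w)+\mu(|u-w|)\exp_u^{-1}(v)\bigr)$ (for $|u-w|\le\rho/2$, identity otherwise), with a bump $\mu$ supported in $[0,\rho/3]$ and satisfying $-13/\rho\le\mu'\le 0$; the diffeomorphism property (i) is then proved by observing that in normal coordinates the map preserves lines $\{W_0+sV\}$ and is monotone on each line, with the positivity of $1+\mu'\cdot\tfrac{d}{ds}|W_0+sV|\ge 1-\tfrac{13}{\rho}\cdot\tfrac{\rho}{14}>0$ being precisely what the numerical value $14$ is for. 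You instead define $h(u,v)$ as the time-one flow of the damped coordinate-constant field $\beta(|w|)\,\xi$ in the normal chart at $u$; you get (i) for free from the fact that the time-one flow of a compactly supported smooth field is always a diffeomorphism, so your statement actually holds for all $(u,v)\in U_{M,\rho}$, not just $U_{M,\rho/14}$, and the mysterious constant becomes irrelevant. The trade-offs: your version is conceptually cleaner and quantitatively stronger on (i), but leans on ODE smooth-dependence machinery, whereas the paper's formula is elementary and directly computable — in particular the paper gives an explicit check that $\exp_u^{-1}(w)$ is jointly smooth in $(u,w)$ (via the diffeomorphism $\nu_M$), which you correctly flag as a "routine point" but do not spell out; that check is the only part of the smoothness argument that actually requires thought. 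One small remark: your worry about "push forward the Euclidean field by $d\exp_u$" versus "coordinate-constant field" is a non-distinction — these two descriptions give the same vector field, since pushforward by a diffeomorphism is exactly reading the field in the target through the source chart — but the object you settle on is the right one, so this has no effect on correctness.
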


We encourage the reader to realize that it is quite clear that such a map
should exist, but do give a proof for completeness, giving an explicit such map $h$, that allows us to directly check that it has the desired property. The reader could alternatively define $h$ as the flow associated to an appropriately chosen family of vector fields on $M$.  

\begin{proof}
Let $\mu \colon [0,\infty )\rightarrow \mathbb{R}$ be a smooth map with

\begin{minipage}{0.6\textwidth}
\begin{equation*}
\begin{array}{rcl}
\mu (r) & = & 1\text{ if }r\leq \rho /4 \\ 
\mu (r) & = & 0\text{ if }r\geq \rho /3 \\ 
\frac{-13}{\rho } & \leq & \mu ^{\prime }(r)\leq 0%
\end{array}
\end{equation*}
\end{minipage}
\begin{minipage}{0.3\textwidth}
%\begin{figure}
%\centering
\begin{lpic}{bump(0.45,0.45)}
\lbl[t]{45,1;$\frac{\rho}{4}$}
\lbl[t]{65,1;$\frac{\rho}{3}$}
\lbl[t]{55,37;$\mu$}
\lbl[t]{2,36;$1$}
\end{lpic}
%\end{figure}
\end{minipage}

\medskip

\noindent If $u,v\in M$ with $|u-v|<\rho $, we define $h(u,v)\colon M\rightarrow M$
by 
\begin{eqnarray*}
h(u,v)(w) &=&\left\{ 
\begin{array}{ll}
\exp _{u}\big(\exp _{u}^{-1}(w)+\mu (|u-w|)\exp _{u}^{-1}(v)\big) & \text{
if }|u-w|\leq \rho /2 \\ 
w & \text{ if }|u-w|\geq \rho /2%
\end{array}%
\right.
\end{eqnarray*}%
First we will show that $h(u,v)(w)$ depends smoothly on $u,v,$ and $w$. It
is enough to show that $\exp _{u}^{-1}(w)$ depends smoothly on $u$ and $w$
if $|u-w|<\rho $. Recall that $TM$ is the open disk bundle of radius $%
\varepsilon$, and $\varepsilon < \rho $. Recall from Section~\ref{tubsec}
the map $\nu_M\colon TM\rightarrow M\times M$ defined by 
\begin{equation*}
\nu_M (u,W) = (u,\exp _{u}W).
\end{equation*}
It is a smooth bijection onto $U_M=\{ (u,w)\ |\ |u-w|<\varepsilon \}$, since
there is a unique geodesic of length $<\varepsilon$ from $u$ to $w$ if $%
|u-w|<\varepsilon$. Thus it will be enough to show that the derivative $%
d\nu_M $ has maximal rank at each point in the domain $TM$ of $\nu_M$. But
at a point $(u,w)$ in the image, the image of $d\nu_M \colon TTM\rightarrow
TM\times TM$ clearly contains all vectors of the form $(0,V)$, since $\exp
_{u}\colon T_{u}M\rightarrow M$ is a diffeomorphism onto $\left\{ w\ |\
|u-w|<\varepsilon \right\}$. And the projection of the image of $d\exp $
onto the first factor $TM$ is clearly surjective, since ``we can move $u$ in 
$TM$". \ Thus $h(u,v)(w)$ is smooth in $u,v$, and $w$.

To see that $h(u,v)$ is a diffeomorphism when $|u-v|<\frac{\rho }{14}$, use $%
\exp _{u}$ to identify $T_{u}M $ with the neighborhood $\left\{ w\ |\
|u-w|<\varepsilon \right\} $ of $u$ in $M $. \ In these coordinates, if $%
v=\exp _{u}V$ and $W\in T_{u}M$, 
\begin{equation*}
h(u,V)(W):=\exp _{u}^{-1}h(u,v)(\exp _{u}W)=\left\{ 
\begin{array}{ll}
W+\mu (|W|)V & \text{ if }|W|\leq \rho /2 \\ 
W & \text{ if }|W|\geq \rho /2.%
\end{array}%
\right.
\end{equation*}
Note that $h(u,V)$ preserves the ``lines" $\{W_{0}+sV\ |\ s\in \mathbb{R}\}$. On
a fixed line we have 
\begin{eqnarray*}
\frac{d}{ds}h(u,V)(W_{0}+sV) &=&\frac{d}{ds}\left( (W_{0}+sV)+\mu
(|W_{0}+sV|)V\right) \\
&=&(1+\mu ^{\prime }(|W_{0}+sV|)\text{ }\frac{d}{ds}(|W_{0}+sV|))V
\end{eqnarray*}%
where the coefficient $(1+\mu ^{\prime }(|W_{0}+sV|)$ $\frac{d}{ds}%
(|W_{0}+sV|))$ $\ $of $V$ on the right is at least 
\begin{equation*}
(1-\frac{13}{\rho }\frac{\rho }{14})>0
\end{equation*}%
because $\mu^{\prime}(r)>-\frac{13}{\rho }$ for all $r$ and $|V|=|u-v|<\frac{%
\rho }{14}$. Thus $h(u,V)$\ is monotone on each line. It follows that, for
each $u,v$, the map \ $h(u,V)$ is a local diffeomorphism and is bijective,
and thus the same is true for $h(u,v)\colon M\rightarrow M$.
\end{proof}

\subsection{Tubular neighborhood of the figure-eight space and equivalence
to Cohen-Jones' definition of the Chas-Sullivan product}

\label{sec:CStub}

There is a pull-back diagram 
\begin{equation*}
\xymatrix{\La\x_M\La \ \ar[d]_{e}\ar@{^(->}[r] & \La\x \La \ar[d]^{e\x e} \\
M\ \ar@{^(->}[r]^-\De & M\x M. }
\end{equation*}%
Let $e^{\ast }(TM)\rightarrow \Lambda \times _{M}\Lambda $ be the pull-back
of the tangent bundle of $M$ along the evaluation at $0$. A number of
authors have already noticed that a tubular neighborhood of $M$ sitting as
the diagonal inside $M\times M$ can be lifted to a tubular neighborhood of $%
\Lambda \times _{M}\Lambda $ inside $\Lambda \times \Lambda $ (see e.g.~\cite%
{AbbSch,CohJon,Mei09}). We give now an explicit such tubular neighborhood,
compatible with the tubular neighborhood $\nu _{M}$ of the diagonal in $%
M^{2} $ constructed in Section~\ref{tubsec}. This construction is suggested
in \cite{AbbSch}.

\begin{prop}
\label{TubCS} Let $M$ be a Riemannian manifold with injectivity radius $\rho$
and let $0<\varepsilon< \frac{\rho}{14}$. Identify as above $TM$ with its $%
\varepsilon$-tangent bundle. The subspace $\Lambda\times_M\Lambda$ of $%
\Lambda\times\Lambda$ admits a tubular neighborhood 
\begin{equation*}
\nu_{\CS}\colon \xymatrix{e^*(TM)\ \ar[d]\ar@{^(->}[r] & \Lambda\times\Lambda,\\
\La\x_M\La & }
\end{equation*}
that is $\nu_{\CS}$ restrict to the inclusion on the zero-section $%
\Lambda\times_M\Lambda$ and is a homeomorphism onto its image $U_{\CS}=\{(\gamma,\lambda)\ |\ |\gamma(0)-\lambda(0)|< \varepsilon\}$. Moreover,
one can choose this tubular neighborhood so that it is compatible with our
chosen tubular neighborhood of the diagonal in $M^2$ in the sense that the
following diagram commutes: 
\begin{equation*}
\xymatrix{e^*(TM)\ \ \ar@{^(->}[r]^{\HH}\ar[d]_{e} &\  \La\x \La\ar[d]^{e\x
e}\\ TM\ \ \ar@{^(->}[r]^{\nu_M}&\  M\x M }
\end{equation*}
where $\nu_M$ is the tubular neighborhood of the diagonal (\ref{nuM}).
\end{prop}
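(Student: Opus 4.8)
The plan is to build $\nu_{CS}$ directly from the point-pushing map $h$ of Lemma~\ref{H}, mimicking on loops what $\nu_M$ does on points. First I would define, for a pair $(\gamma,W)$ with $\gamma\in\Lambda\times_M\Lambda$, say $\gamma=(\gamma_1,\gamma_2)$ with common basepoint $x=\gamma_1(0)=\gamma_2(0)$, and $W\in T_xM$ with $|W|<\varepsilon$, the loop pair
\begin{equation*}
\nu_{CS}(\gamma,W)=\big(\gamma_1,\ h(x,\exp_x W)\circ\gamma_2\big),
\end{equation*}
or a symmetric variant; here $h(x,\exp_x W)\colon M\to M$ is the diffeomorphism that pushes $x$ to $\exp_x W=:y$ and is the identity outside the $\rho/2$-ball. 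The postcomposition $h(x,y)\circ\gamma_2$ is again an $H^1$-loop (composition with a diffeomorphism preserves $H^1$), so $\nu_{CS}$ lands in $\Lambda\times\Lambda$. By property (ii) of Lemma~\ref{H} its new basepoint is $y$, so $(e\times e)(\nu_{CS}(\gamma,W))=(x,y)=(x,\exp_x W)=\nu_M(x,W)$, which is exactly the compatibility square; and by property (iv), $\nu_{CS}$ restricts to the inclusion on the zero section $W=0$.

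Next I would check that $\nu_{CS}$ is a homeomorphism onto $U_{CS}$. For surjectivity: given $(\gamma_1,\gamma_2)\in\Lambda\times\Lambda$ with $|\gamma_1(0)-\gamma_2(0)|<\varepsilon$, set $x=\gamma_1(0)$, $y=\gamma_2(0)$, let $W=\exp_x^{-1}(y)$ (well-defined since $\varepsilon<\rho$), and use that $h(x,y)\colon M\to M$ is a diffeomorphism (property (i), valid since $\varepsilon<\rho/14$) to recover $\gamma_2$ as $h(x,y)^{-1}\circ\big(h(x,y)\circ\gamma_2\big)$; so the preimage is $\big((\gamma_1,h(x,y)^{-1}\circ\gamma_2),W\big)$, which does lie in $e^*(TM)$ because $h(x,y)^{-1}$ fixes $y$ and hence the second loop has basepoint $x$. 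Injectivity is immediate from the same formula since $h(x,y)$ is invertible. Continuity of $\nu_{CS}$ follows from smoothness of $h$ in all three variables (Lemma~\ref{H}) together with continuity of the action of diffeomorphisms on $H^1$-loops; continuity of the inverse follows from continuity of $(\gamma_1,\gamma_2)\mapsto\big(x,y,\exp_x^{-1}(y)\big)$ and of $h(x,y)^{-1}$ in $(x,y)$, again via Lemma~\ref{H}.

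Finally I must argue that $\nu_{CS}$ is a genuine tubular neighborhood, i.e.\ that it is compatible with the Hilbert manifold structure, not merely a homeomorphism. Here I would invoke that $e\colon\Lambda\times_M\Lambda\to M$ is a smooth submersion and that the square in the statement exhibits $e^*(TM)\to\Lambda\times\Lambda$ as pulled back along $e\times e$ from the honest tubular neighborhood $\nu_M$ of $\Delta M$; since $\nu_M$ is a tubular neighborhood in the finite-dimensional sense and the loop coordinates transform smoothly under $h$, the normal derivative of $\nu_{CS}$ along the zero section is the identity on $e^*(TM)$ fiberwise, which is the required non-degeneracy. \textbf{The main obstacle} I anticipate is precisely this last point: verifying that postcomposition with $h(x,y)$ depends \emph{smoothly} (as a map of Hilbert manifolds) on the parameters and that the resulting $\nu_{CS}$ is a smooth embedding with the correct normal bundle — the set-theoretic and continuity statements are routine, but the smoothness of the "act by a family of diffeomorphisms" operation on $H^1$-loops, and the identification of the fiberwise derivative with the identity, is where the real work lies. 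If a fully smooth model is awkward, one can fall back on the observation that for the chain-level constructions only the homeomorphism onto $U_{CS}$ and the compatibility square are used, so a topological tubular neighborhood suffices.
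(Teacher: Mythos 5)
Your construction is exactly the paper's: define $\nu_{CS}(\gamma_1,\gamma_2,W)=(\gamma_1,\,h(x,\exp_x W)\circ\gamma_2)$ via the point-pushing diffeomorphism of Lemma~\ref{H}, write down the explicit inverse $(\gamma_1,\lambda)\mapsto(\gamma_1,\,h(\gamma_1(0),\lambda(0))^{-1}\circ\lambda,\,\exp_{\gamma_1(0)}^{-1}\lambda(0))$, and read off compatibility with $\nu_M$ from properties (ii)--(iv) of $h$. Your final worry about smoothness as a map of Hilbert manifolds is moot for the paper's purposes, and you correctly identify why in your last sentence: the proposition (and the paper's proof) only asserts a \emph{topological} tubular neighborhood, i.e.\ a homeomorphism onto $U_{CS}$ together with the compatibility square, which is all the subsequent chain-level constructions use.
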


\begin{proof} Recall that vectors $V\in TM\equiv TM_{\eps}$ are assumed to have length at most $\eps<\frac{\rho}{14}$. In particular, the exponential map makes sense on such vectors.
  
Let $(\gamma,\delta,V)\in e^*(TM)$, so $(\gamma ,\delta )\in \Lambda \times
_{M}\Lambda $ and $V\in T_{\gamma_0}M=T_{\delta_0}M$, for $\gamma_0=\delta_0$
the startpoints of the loops. The map $\nu _{\CS}$ is defined by $\nu
_{\CS}(\gamma ,\delta ,V)=(\gamma ,\lambda)\in \Lambda\times
\Lambda$ with 
\begin{equation*}
\lambda =h\big(\delta_0,\exp _{\delta_0}(V)\big)\circ \delta 
\end{equation*}%
where $h$ is the map of Lemma~\ref{H}. The map $\nu_{\CS}$ is continuous with image the space
${U}_{\CS}:=\{(\gamma ,\lambda )\ |\ |\gamma_0-\lambda_0|<\varepsilon \}\subset
\Lambda \times \Lambda $. Define now $\kappa _{\CS}\colon {U}_{\CS}\rightarrow
e^{\ast }(TM)$ by $\kappa_{\CS} (\gamma ,\lambda )=(\gamma ,\delta ,V)$
where 
\begin{eqnarray*}
\delta  &=&(h(\gamma_0,\lambda_0))^{-1}\circ \lambda  \\
V &=&\exp _{\gamma_0}^{-1}\lambda_0
\end{eqnarray*}%
The map $\kappa _{\CS}$ is again continuous, and is an inverse for $\nu
_{\CS}$. The compatibility of $\nu_{\CS}$ and $\nu_M$ follows from the
properties of $h$.
\end{proof}

Note that the inverse $\kappa_{\CS}$ of the tubular embedding $\nu_{\CS}$
extends to a ``collapse map'', for which we use the same notation, 
\begin{equation}  \label{muLa}
\kappa_{\CS}\colon \Lambda^2 \longrightarrow \Thom(e^*TM)
\end{equation}
by taking $(\gamma,\delta)$ to $\kappa(\gamma,\delta)=\nu_{C\!
S}^{-1}(\gamma,\delta)$ if $(\gamma,\delta)\in U_{\CS}$ and to the basepoint
otherwise. Restricted to $U_{\CS}$, this map has two components: $\kappa_{C\!
S}=(k_{\CS},v_{\CS})$, where 
\begin{equation*}
k_{\CS}\colon U_{\CS}\longrightarrow \Lambda\times_M\Lambda
\end{equation*}
is a retraction map. We will now show that our retraction map $R_{\CS}$ of
Section~\ref{sec:newCS} that add sticks instead of deforming the loops,
gives an approximation of the retraction component $k_{\CS}$ of the
collapse map, and that both maps are homotopy inverses to the inclusion. 

\begin{lem}
\label{retract} The maps $$R_{\CS},k_{\CS}\colon U_{\CS}\longrightarrow\Lambda\times_M\Lambda$$
are homotopic. Moreover, they both define deformation retractions of $U_{\CS}$ onto $\La\x_M\La$. 
\end{lem}

\begin{figure}[h]
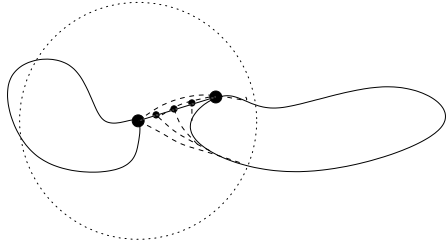

\centering
\begin{lpic}{CShtpy(0.5,0.5)}
 \lbl[r]{-1,40;$\ga$}
\lbl[l]{117,38;$\delta$}
\end{lpic}
\caption{Homotopy between the retractions $k_{\CS}$ and $R_{\CS}$.}
\label{fig:CShtpy}
\end{figure}

\begin{proof} We first show that the two maps are homotopic. 
Note that the only difference between the maps $k _{\CS}$ and $R_{\CS}$ is
their value on the second factor of $\Lambda\times_M\Lambda$; for $(\gamma,\lambda)\in U_{\CS}$, this second factor is given as 
\begin{equation*}
\delta =(h(\gamma_0,\lambda_0))^{-1}\circ \lambda 
\end{equation*}%
in the first case and the loop $\overline{\gamma_0\lambda_0}\star\lambda\star\overline{\lambda_0\gamma_0}$ in the second case. To prove
the first part of the  lemma, it is enough to define a homotopy $H\colon U_{\CS}\times I \rightarrow
\Lambda$ between these two maps, with the property that $H(\gamma ,\lambda,s)(0)=\gamma_0$ for all $s$. Such a homotopy $H$ can be given as follows:
set $H(\gamma ,\lambda ,s)$ to be the loop 
\begin{equation*}
\overline{\gamma_0\lambda_0}\,[0,s]\ \star\ \big((h(\overline{\gamma_0\lambda_0}(s),\lambda_0)^{-1}\circ \lambda\big)\text{ }\star \text{ }\overline{\gamma_0\lambda_0}\,[0,s]^{-1}
\end{equation*}%
where $\overline{\gamma_0\lambda_0}[0,s]$ is the geodesic from $\gamma_0$
to the point $\overline{\gamma_0\lambda_0}(s)$ at time $s$ on the geodesic,
parametrized on $[0,1]$, from $\gamma_0$ to $\lambda_0$, and $\overline{%
\gamma_0\lambda_0}\,[0,s]^{-1}$ is the same geodesic but in the reversed
direction. (See Figure~\ref{fig:CShtpy} for and illustration of this
homotopy.)

\smallskip

Let $j\colon \La\x_M\La\to U_{\CS}$ denote the inclusion. 
Both maps $R_{\CS}$ and $k_{\CS}$ restrict to the identity on $j(\La\x_M\La)$, which is the subspace of pairs of loops $(\lambda,\delta)$ such that $\lambda_0=\delta_0$. For the second part of the statement,
we need to show that there are homotopies $G_R,G_k\colon U_{\CS}\x I\to U_{\CS}$ between $j\circ R_{\CS}$ (resp.~$j\circ k_{\CS}$) and the identity, relative to $\La\x_M\La$. 
It is enough to show that one such homotopy because the homotopy $R_{\CS}\simeq k_{\CS}$ just constructed restricts to the identity on $\La\x_M\La$. We define $G_R$ by the following formula:
$$G_R(\ga,\lambda,s)=\big(\ga,\overline{\gamma_0\lambda_0}[s,1]\star\lambda\star\overline{\ga_0\lambda_0}[s,1]^{-1}\big),$$
i.e.~$G_R$ retracts the added geodesic path $\overline{\ga_0\lambda_0}$. 
\end{proof}

Cohen and Jones define the Chas-Sullivan product using a composition 
\begin{equation*}
C_*(\Lambda)\otimes C_*(\Lambda) \xrightarrow{\x} C_*(\Lambda\times\Lambda)\longrightarrow C_*(\Thom(e^*(TM)|_{\La\x_M\La})) \xrightarrow{\cong} C_{*-n}(\La\x_M \La)\xrightarrow{\concat} C_{*-n}(\Lambda)
\end{equation*}
where 
the second is the collapse map associated to a (not explicitly given) tubular neighborhood of $\Lambda\times_M\Lambda$ inside $\Lambda\times \Lambda$ of the form
constructed in Proposition~\ref{TubCS}, and the third one is the 
Thom isomorphism. (See \cite[Thm 1]{CohJon} or \cite[(1.7) and Prop 5]{CJY}
for a detailed description.)

\begin{prop}
\label{prop:CSequiv} The loop product $\wedge_{\Thom}$ of Definition~\ref{def:stickypro} is chain homotopic to the explicit representative of the
Cohen-Jones product of \cite{CohJon} obtained using Proposition~\ref{TubCS}.
\end{prop}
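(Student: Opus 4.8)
The plan is to show that both operations are composites of the same sequence of chain maps, differing only in the choice of retraction map applied to the capped chain, and then invoke Lemma~\ref{retract} together with standard naturality of the cap product. Concretely, both $\wedge_{Th}$ and the Cohen-Jones representative factor as
\begin{equation*}
C_*(\Lambda)\ot C_*(\Lambda)\xrightarrow{\times} C_*(\Lambda^2)\xrightarrow{\tau_{CS}\,\cap} C_{*-n}(U_{CS})\xrightarrow{\text{retr}} C_{*-n}(\Lambda\x_M\Lambda)\xrightarrow{concat} C_{*-n}(\Lambda),
\end{equation*}
where for our product the retraction is $R_{CS}$ and for the Cohen-Jones model it is $k_{CS}$, the retraction component of the collapse map $\kappa_{CS}$ of Proposition~\ref{TubCS}. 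So the first step is to make this identification precise on the Cohen-Jones side: using the explicit tubular neighborhood $\nu_{CS}$ of Proposition~\ref{TubCS}, the collapse map $\kappa_{CS}\colon \Lambda^2\to Th(e^*TM)$ followed by the Thom isomorphism of the bundle $e^*TM\to \Lambda\x_M\Lambda$ is, on chains, exactly $k_{CS}\circ(\tau_{CS}\cap -)$ up to sign — this is the content of Example~\ref{ex:cap} and the compatibility diagram in Proposition~\ref{TubCS}, which says $\nu_{CS}$ pulls $\tau_M$ back to the Thom class $u_M$ of $e^*TM$. Hence the Cohen-Jones product is chain homotopic to $concat\circ k_{CS}\circ(\tau_{CS}\cap -)\circ\times$.

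Next I would feed in Lemma~\ref{retract}: there is a homotopy $H\colon U_{CS}\x I\to \Lambda\x_M\Lambda$ from $k_{CS}$ to $R_{CS}$. Precomposing $H$ with the chain map $(\tau_{CS}\cap -)\circ\times$ and postcomposing with $concat$ yields a chain homotopy between $concat\circ k_{CS}\circ(\tau_{CS}\cap-)\circ\times$ and $concat\circ R_{CS}\circ(\tau_{CS}\cap-)\circ\times=\wedge_{Th}$. One small technical point to address here is that the cap product $\tau_{CS}\cap-$ is only defined after barycentric subdivision into ``small simplices'' supported in $U_{CS,\varepsilon}$ or in $U_{CS,\varepsilon_0}^c$; since subdivision is naturally chain homotopic to the identity and the homotopy $H$ is defined on all of $U_{CS,\varepsilon}$, this does not cause trouble, but it should be mentioned that one works with the subdivided chain $\widehat{A\x B}$ throughout, exactly as in Section~\ref{sec:newCS}.

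The main obstacle I expect is bookkeeping rather than anything conceptual: matching the signs and the precise form of the Thom isomorphism used by Cohen-Jones (their convention in \cite{CohJon} or \cite{CJY}) with the cap-product-with-$\tau_M$ convention fixed in Appendix~\ref{app:cap}, and checking that the ``collapse map followed by Thom iso'' really equals ``cap with $\tau_{CS}$ followed by $k_{CS}$'' on the nose (not just up to homotopy) once one unwinds definitions. The homotopy-theoretic core — that two homotopic retractions give chain-homotopic composites — is immediate; the work is in verifying that the Cohen-Jones construction, once the explicit $\nu_{CS}$ of Proposition~\ref{TubCS} is substituted for their unspecified tubular neighborhood, literally is $concat\circ k_{CS}\circ(\tau_{CS}\cap-)\circ\times$ up to a sign that is absorbed into the definition of $\wedge_{Th}$ (recall $\wedge_{Th}$ carries no sign correction, so one wants the Cohen-Jones representative to agree with it, not with $\wedge$). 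I would handle this by citing Example~\ref{ex:cap} for the identification of the collapse-plus-Thom map with capping, and the compatibility square of Proposition~\ref{TubCS} for the identification of the pulled-back Thom class, leaving only Lemma~\ref{retract} to complete the argument.
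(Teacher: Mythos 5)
Your proposal is correct and follows essentially the same route as the paper's own proof: identify the Cohen--Jones collapse-plus-Thom-isomorphism with $concat\circ k_{CS}\circ(\tau_{CS}\cap -)\circ\times$ via Proposition~\ref{TubCS} and naturality of the cap product (Appendix~\ref{app:cap}), then replace $k_{CS}$ by $R_{CS}$ using Lemma~\ref{retract}. The paper states this more tersely, but the decomposition, the key lemma invoked, and the ``cap-product naturality plus Thom isomorphism'' closing step are all the same.
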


\begin{proof}
The map $\kappa_{\CS}$ of (\ref{muLa}) is an explicit collapse map associated
to a chosen tubular neighborhood of the form considered in \cite[Sec 1]%
{CohJon}, allowing the following explicit comparison of their approach to ours, where $[\tau\cap]$ is the Thom isomophism, capping with the Thom class of $e^*TM$:
$$\xymatrix{C_*(\La\x \La) \ar[r]^-{\kappa_{\CS}}\ar[d] &  C_*(\Thom(e^*(TM)|_{\La\x_M\La})) \ar[r]_-\cong^-{[\tau\cap]} &  C_{*-n}(\La\x_M \La) \\
C_*(\La\x \La,U_{\CS}^c) \ar[r]^-{[\tau_{\CS}\cap]}  \ar[ur]^-{\kappa_{\CS}} & C_{*-n}(U_{\CS}) \ar[ur]_-{R_{\CS}}  &
  }$$
  We have that $\kappa_{\CS}=(k_{\CS},v_{\CS})\simeq (R_{\CS},v_{\CS})$ by Lemma~\ref{retract}, and the Thom classes are compatible under this map as they are both pulled back from $\tau_M$.
 Hence the square commutes by the naturality of the cap product (see Appendix~\ref{app:cap}), which proves the result. 
\end{proof}

\begin{thm}
  \label{thm:CSalg} The algebraic loop product $\wedge$ is a unital, associative and graded commutative product of degree 0 on degree shifted homology
 $$\wedge\colon
 H_{p+n}(\Lambda)\otimes H_{q+n}(\Lambda)\to H_{(p+q)+n}(\Lambda).$$
 In particular, for $[A]\in H_{p+n}(\La)$ and $[B]\in H_{q+n}(\La)$, 
\begin{equation*}
[A]\wedge [B]=(-1)^{pq}[B]\wedge [A]. 
\end{equation*}
\end{thm}

\begin{proof}
This result can be extracted from the litterature. However, as there are so
many different definitions and sign conventions in the literature, we give
here instead a quick direct proof of the sign-commutativity using our
definition. The associativity and unitality can likewise be proved by lifting the
associativity and unitality of the algebraic intersection product to the
loop space; a proof of associativity of the intersection product that can be lifted to the present situation is given in Proposition~\ref{prop:intalg}. For the unit, a proof for classes represented by manifolds is given in Example~\ref{ex:unit}. 

\medskip

Consider the diagram 
\begin{equation*}
\xymatrix{H_p(\La)\ot H_q(\La) \ar[r]^-{\x} \ar[d]_{t_\ot} & H_{p+q}(\La\x\La)
\ar[r]^-{t^*\!\tau_{\CS}\cap}\ar[d]^t & H_{p+q-n}(U_{\CS}) \ar[r]^-{R_{\CS}} \ar[d]^t
& H_{p+q-n}(\La\x_M\!\La) \ar[r]^-{\concat}\ar[d]^t & H_{p+q-n}(\La)\ar[d]^{id} \\
H_p(\La)\ot H_q(\La) \ar[r]^-{\x} & H_{p+q}(\La\x\La) \ar[r]^-{\tau_{\CS}\cap} &
H_{p+q-n}(U_{\CS}) \ar[r]^-{R_{\CS}} & H_{p+q-n}(\La\x_M\!\La) \ar[r]^-{\concat} & H_{p+q-n}(\La) }
\end{equation*}
where $t_{\ot}$ denotes the twist map that takes $[A]\otimes [B]$ to $%
(-1)^{pq}[B]\otimes [A]$, while the map $t\colon \La\x\La\to \La\x\La$, and its restriction to $U_{\CS}$ and $\La\x_M\La$, take a pair $(\gamma,\delta)$ to $(\delta,\gamma)$.
The first square commutes by the commutativity of the cross product. The second square
commutes by the naturality of the cap product (Appendix~\ref{app:cap}). The
third square commutes up to homotopy on the space level by a homotopy that
slides the basepoint along the geodesic stick added by the retraction.
Finally the last square commutes up to the homotopy on the space level, using that the
concatenation map is homotopic to the map that concatenates at time $s=\frac{%
1}{2}$, and that the identity map on $\Lambda$ is homotopic to the map that
reparametrizes the loops by precomposing with half a rotation of $S^1$. Now $%
t^*\tau_{\CS}=(-1)^n\tau_{\CS}$ because the corresponding fact holds for $%
\tau_M$. Hence the top row is the $(1)^n$ times the product $\wedge_{\Thom}$,
while the bottom row is $\wedge_{\Thom}$. The diagram thus gives that $%
(-1)^{n}[A]\wedge_{\Thom} [B]=(-1)^{pq}[B]\wedge_{\Thom} [A]$. Hence 
\begin{align*}
[A]\wedge [B]=(-1)^{n-np}[A]\wedge_{\Thom} [B]=(-1)^{pq-np}[B]\wedge_{\Thom}
  [A]&=(-1)^{pq-np+n-nq}[B]\wedge [A] \\
  &=(-1)^{(n-p)(n-q)}[B]\wedge [A],
\end{align*}
which gives the result after the given degree shifting by $n$.
\end{proof}

\subsection{Tubular neighborhood of the loop space inside the path space}

To define a tubular neighborhood for the coproduct, we will use a tubular neighborhood of $\Lambda $ inside the
path space $PM$, constructed in this section.

\smallskip

Let 
\begin{equation*}
P M=\{\gamma \colon [0,1]\rightarrow M\ |\ \gamma \text{ is } H^1\}
\end{equation*}
be the space of $H^1$-paths in $M$, with evaluation maps $e_0$, $%
e_{1}\colon PM\rightarrow M$ at $0$ and $1$ so that 
\begin{equation*}
\Lambda =(e_0,e_{1})^{-1}\Delta
\end{equation*}
for $(e_0,e_1)\colon PM\to M^2$. Consider the vector bundle $e_{1}^{\ast}(TM)%
\rightarrow \Lambda $.

\begin{lem}
\label{pathtub} The embedding $\Lambda\subset PM$ admits a tubular
neighborhood in the sense that there is an embedding 
\begin{equation*}
\nu_P\colon \xymatrix{e_{1}^{\ast }(TM)\ \ar[d] \ar@{^(->}[r] & PM \\ \La & }
\end{equation*}
restricting to the inclusion on the zero-section and with image $%
U_P=\{\gamma \in PM\ |\ \gamma(1) -\gamma(0) |<\varepsilon \}$. Moreover,
this tubular neighborhood is compatible with that of the diagonal in $M^2$
in the sense that the following diagram commutes: 
\begin{equation*}
\xymatrix{e_1^*(TM)\ \ \ar@{^(->}[r]^-{\nu_P}\ar[d]_{e_1} &\ 
PM\ar[d]^{(e_0,e_1)}\\ TM\ \ \ar@{^(->}[r]^-{\nu_M}&\  M\x M }
\end{equation*}
where $\nu_M$ is the map (\ref{nuM}).
\end{lem}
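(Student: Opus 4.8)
The plan is to lift the tubular neighbourhood $\nu_M$ of the diagonal through the evaluation $(e_0,e_1)$ using the ``push points'' map $h$ of Lemma~\ref{H}, exactly as in the construction of $\nu_{C\!S}$ in Proposition~\ref{TubCS}. The only new feature is that here we must keep the startpoint $\gamma(0)$ of a loop fixed while displacing its endpoint $\gamma(1)=\gamma(0)$; since no diffeomorphism of $M$ can fix a point and move it, we interpolate in the path parameter. Concretely, fix once and for all a smooth monotone $\beta\colon[0,1]\to[0,1]$ with $\beta(0)=0$ and $\beta(1)=1$, and for $(\gamma,V)\in e_1^*(TM)$ (so $\gamma\in\Lambda$, $V\in T_{\gamma(1)}M$, $|V|<\varepsilon$) set
\begin{equation*}
\nu_P(\gamma,V)(t)=h\big(\gamma(1),\,\exp_{\gamma(1)}(\beta(t)V)\big)\big(\gamma(t)\big).
\end{equation*}
Since $|\beta(t)V|\le|V|<\varepsilon<\rho/14$, every occurrence of $h$ above is a diffeomorphism by Lemma~\ref{H}(i), so $\nu_P(\gamma,V)$ is a well-defined $H^1$ path depending continuously on $(\gamma,V)$.

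Next I would verify the three required properties. On the zero-section, Lemma~\ref{H}(iv) gives $h(\gamma(1),\gamma(1))=\id$, hence $\nu_P(\gamma,0)=\gamma$ and $\nu_P$ restricts to the inclusion $\Lambda\hookrightarrow PM$. Evaluating at endpoints, $\nu_P(\gamma,V)(0)=h(\gamma(1),\gamma(1))(\gamma(0))=\gamma(1)$ by (iv), while $\nu_P(\gamma,V)(1)=h(\gamma(1),\exp_{\gamma(1)}V)(\gamma(1))=\exp_{\gamma(1)}V$ by (ii) (legitimate since $|V|<\rho/14$); thus $(e_0,e_1)\big(\nu_P(\gamma,V)\big)=(\gamma(1),\exp_{\gamma(1)}V)=\nu_M(e_1(\gamma,V))$, which is the asserted commuting square, and also shows the image of $\nu_P$ is contained in $U_P$.

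Finally I would exhibit an explicit two-sided inverse $\kappa_P\colon U_P\to e_1^*(TM)$, proving $\nu_P$ is a homeomorphism onto $U_P$. Given $\eta\in PM$ with $|\eta(0)-\eta(1)|<\varepsilon$, put $V:=\exp_{\eta(0)}^{-1}(\eta(1))$ and
\begin{equation*}
\gamma(t):=\Big[h\big(\eta(0),\exp_{\eta(0)}(\beta(t)V)\big)\Big]^{-1}\big(\eta(t)\big),
\end{equation*}
which is a well-defined $H^1$ path since each $h(\eta(0),\cdot)$ involved is a diffeomorphism. Using (iv) at $t=0$ and (ii) at $t=1$ one checks $\gamma(0)=\eta(0)=\gamma(1)$, so $\gamma\in\Lambda$ and (since $\gamma(1)=\eta(0)$) $(\gamma,V)\in e_1^*(TM)$; set $\kappa_P(\eta):=(\gamma,V)$, manifestly continuous in $\eta$. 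Direct substitution then gives $\nu_P\circ\kappa_P=\id$ and $\kappa_P\circ\nu_P=\id$, the latter using injectivity of $\exp_{\gamma(1)}$ on the $\varepsilon$-ball. The one point requiring genuine care is the basepoint bookkeeping: the factor $\beta(t)$ is precisely what keeps the common value $\gamma(0)=\gamma(1)$ pinned while $\gamma(1)$ is transported to $\exp_{\gamma(1)}(V)$, and checking that the reconstructed $\gamma$ is an honest loop (not merely an $H^1$ path) is the crux — it is exactly where property (ii) of $h$ is used.
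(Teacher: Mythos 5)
Your proof is correct and follows essentially the same approach as the paper's: lift $\nu_M$ through the evaluation by applying the pushing diffeomorphism $h\big(\gamma_0,\exp_{\gamma_0}(\cdot V)\big)$ pointwise along the parameter, with the inverse $\kappa_P$ recovered by applying $h^{-1}$ and taking $V=\exp_{\tau_0}^{-1}(\tau_1)$. The only cosmetic difference is your extra reparametrization $\beta$; the paper simply takes $\beta(t)=t$.
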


This lemma is parallel to Proposition~\ref{TubCS}, and so is its proof. In
the current case though we cannot move the whole loop; we need to keep the
starting point fixed while moving the endpoint.

\begin{proof} Recall again that we identify $TM$ with $TM_\eps$, its subbundle of vectors of length at most $\eps$.  
Given a loop $\gamma \in \Lambda $ and a vector $V\in T_{\gamma_0}M$, we
will define a path $\tau \in PM$. As before, we let $\gamma _{0}
=\gamma(0)=\gamma (1)$ denote the basepoint of $\gamma$, and we write $%
\tau_0=\tau(0)$ and $\tau_1=\tau(1)$ for the endpoints of a path $\tau$.

The map $\nu _{P}$ is defined by $\nu _{P}(\gamma ,V)=\tau$
where 
\begin{equation*}
\tau(t)=h(\gamma _{0},\exp _{\gamma _{0}}(tV))(\gamma (t))
\end{equation*}%
where $h$ is the map of Lemma~\ref{H}. Note that, from the properties of $h$
we have $\tau_0=\gamma _{0}$ and $\tau_1=\exp_{\gamma _{0}}(V)$ hence $\nu_P$ is compatible with $\nu _{M}$. Also it
restricts to the identity on the zero section, i.e.~when $V=0$. The image of 
$\nu _{P}$ lies inside $U_P=\{\tau \in PM\ |\ |\tau _{1}-\tau
_{0}|<\varepsilon \}$, an open neighborhood of $\Lambda$ inside $P$. We will
show that $\nu _{P}$ is a homeomorphism onto $U_{P}$ by defining an inverse.

Define $\kappa _{P}\colon U_{P}\rightarrow e_{1}^{\ast }(TM)$ by $\kappa _{P}(\tau
)=\big(k_{P}(\tau ),v_{P}(\tau )\big)=\big(\gamma,\exp _{\tau _{0}}^{-1}(\tau _{1})\big)$ where 
\begin{align}  \label{equkP}
\gamma (t)&=(h(\tau _{0},\overline{\tau _{0}\tau_{1}}(t)))^{-1}(\tau (t)) 
%V_{P}(\tau ) &=\exp _{\tau _{0}}^{-1}(\tau _{1}).
\end{align}

We have $\gamma(1)=h(\tau _{0},\tau _{1})^{-1}\tau
_{1}=\tau_0=\gamma_0$, and hence $\ga$ is a loop.
The maps $\nu _{P}$ and $\kappa _{P}$ are continuous. Note that 
\begin{equation*}
\overline{\tau _{0}\tau _{1}}(t)=\exp _{\tau _{0}}tv_{P}(\tau )\text{.}
\end{equation*}%
It follows that $\nu _{P}$ and $\kappa _{P}$ inverses of each other which
finishes the proof.
\end{proof}

\begin{rem}
Note that the tubular embedding $\nu_P$ takes constant paths to geodesics:
if $\gamma $ is constant, then $\nu _{P}(\gamma,V) $ is a geodesic.
\end{rem}

Just as in our definitions of the loop product and coproduct, we can give a
``sticky'' version of the retraction map $k_P$ of (\ref{equkP}): Let $U_P\subset PM$ be the $%
\varepsilon$-neighborhood of $\Lambda$ inside $PM$ as above and define 
\begin{equation*}
R_{P}\colon U_{P}\ \longrightarrow \ \ \Lambda
\end{equation*}%
by 
\begin{equation*}
R_{P}(\tau )=\tau \star \overline{\tau _{1}\tau _{0}}
\end{equation*}%
That is, we add a ``stick" $\overline{\tau _{1}\tau _{0}}$ at the end of $%
\tau $ to get back to the starting point $\tau _{0}$. Note that, by
definition, $R_{P}(\tau )=\tau $ if $\tau \in \Lambda $.
Note also that $k_P$ and $R_P$ both keep the startpoint of the paths constant,
i.e.~they define maps over the evaluation at $0$: 
\begin{equation*}
\xymatrix{PM\ar[rr]^-{k_P,R_P} \ar[dr]_{e_0} & & \ \La. \ar[dl]^{e_0}\\ & M&
}
\end{equation*}

\begin{lem}
\label{lem:htpypath} The maps $k_{P},R_{P}\colon U_{P}\longrightarrow \Lambda $
are homotopic, through a homotopy leaving $\Lambda\subset U_P$ fixed at all
time, and such that the homotopy is over the identity on $M$ when evaluating
at $0$, i.e.~the basepoints of the paths stay unchanged throughout the
homotopy.
\end{lem}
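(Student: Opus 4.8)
The plan is to exhibit an explicit homotopy $G\colon U_P\times I\to \Lambda$ from $k_P$ to $R_P$, built by interpolating geometrically between ``undoing the diffeomorphism $h(\tau_0,\overline{\tau_0\tau_1}(t))$'' and ``appending a geodesic stick'', while keeping $G(\tau,s)(0)=\tau_0$ throughout. The key observation, exactly as in the proof of Lemma~\ref{retract}, is that both maps produce loops based at $\tau_0$, and that the only difference between $k_P(\tau)$ and $R_P(\tau)$ lies in how they ``close up'' the path $\tau$: the former deforms $\tau$ by the ambient isotopy coming from $h$, the latter leaves $\tau$ alone and concatenates with the minimal geodesic $\overline{\tau_1,\tau_0}$. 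Since $\tau\in U_P$ means $|\tau_1-\tau_0|<\varepsilon<\rho$, the geodesic stick $\overline{\tau_1,\tau_0}$ is well-defined and varies continuously with $\tau$.

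First I would reduce to the case where $M$ is replaced, via $\exp_{\tau_0}$, by a normal coordinate ball: working in $T_{\tau_0}M$, the endpoint $V_P(\tau)=\exp_{\tau_0}^{-1}(\tau_1)$ is a vector of norm $<\varepsilon$, the path $\overline{\tau_0\tau_1}(t)$ becomes the straight segment $t\mapsto tV_P(\tau)$, and $h(\tau_0,\cdot)$ acts by the formula $W\mapsto W+\mu(|W|)(\cdot)$ established in the proof of Lemma~\ref{H}. Then I would define the homotopy in two (concatenated) stages. Stage one: linearly scale the ``correction'' applied to $\tau$, i.e.\ use the family $\tau_s(t)=h\big(\tau_0,\,\exp_{\tau_0}(s\,t\,V_P(\tau))\big)^{-1}(\tau(t))$ for $s$ running from $1$ down to $0$; at $s=1$ this is $k_P(\tau)$, and at $s=0$ it is $\tau$ itself (since $h(\tau_0,\tau_0)=\id$ by property (iv) of Lemma~\ref{H}), a path from $\tau_0$ to $\tau_1$. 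Stage two: having deformed $k_P(\tau)$ to the honest path $\tau$, now interpolate $\tau$ to $\tau\star\overline{\tau_1,\tau_0}=R_P(\tau)$ by the standard reparametrization homotopy that grows the geodesic tail: for $s\in[0,1]$ concatenate $\tau$ with the partial stick $\overline{\tau_1,\tau_0}[0,s]$ and then run backwards along that same partial stick, so that at $s=0$ one recovers $\tau$ (viewed as a loop, since $|\tau_1-\tau_0|<\varepsilon$ forces... actually $\tau$ need not be a loop, so the $s=0$ endpoint of stage two is the loop $\tau\star\overline{\tau_1,\tau_0}[0,0]\star\overline{\tau_1,\tau_0}[0,0]^{-1}$, which deformation-retracts onto $\tau\star\overline{\tau_1,\tau_0}$); concretely it is cleanest to take stage two to be the homotopy $\tau\mapsto \tau\star\overline{\tau_1,\overline{\tau_1,\tau_0}(s)}\star\overline{\tau_1,\tau_0}(s)$-type family sending $s=0$ to the constant-tail loop and $s=1$ to $R_P(\tau)$. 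In all formulas the basepoint at $0$ is $\tau(0)=\tau_0$, unchanged, and on $\Lambda\subset U_P$ we have $V_P(\tau)=0$, so stage one is constant and stage two degenerates since $\overline{\tau_1,\tau_0}=\overline{\tau_0,\tau_0}$ is constant, giving $G(\tau,s)=\tau$ for all $s$ as required.

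I expect the main obstacle to be purely bookkeeping: checking that the two stages glue into a single \emph{continuous} map $U_P\times I\to\Lambda$ with values in $H^1$-loops (the concatenations and the inverse of $h$ are continuous in the $H^1$ topology by Lemma~\ref{H}, but one must be careful that the reparametrizations introduced when concatenating depend continuously on $\tau$ via $\mathcal{L}$, which they do since optimal concatenation is continuous), and that it is correctly normalized at the two endpoints $s=0,1$ so that it literally equals $k_P$ and $R_P$ rather than something homotopic to them. There is no deep geometric difficulty here — all the real content was already in Lemma~\ref{H} and the normal-coordinate picture — so the proof is short: one writes down $G$, observes continuity, checks the endpoint values, and notes that $G(\tau,s)(0)=\tau_0$ and that $G$ is stationary on $\Lambda$.
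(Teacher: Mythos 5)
Your two-stage plan has a genuine gap: the stage-one family does not stay in $\Lambda$. With $\tau_s(t)=h\bigl(\tau_0,\exp_{\tau_0}(st\,V_P(\tau))\bigr)^{-1}(\tau(t))$, one has $\tau_s(0)=\tau_0$ for all $s$, but $\tau_s(1)=h\bigl(\tau_0,\exp_{\tau_0}(s\,V_P(\tau))\bigr)^{-1}(\tau_1)$; in normal coordinates at $\tau_0$ this is $(1-s)V_P(\tau)$ (as $\mu\equiv 1$ on the relevant range), i.e.\ $\tau_s(1)=\overline{\tau_0\tau_1}(1-s)\neq\tau_0$ for $0<s<1$. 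So $\tau_s\in PM\setminus\Lambda$ for every intermediate $s$. You do notice that the $s=0$ term is ``a path from $\tau_0$ to $\tau_1$,'' and the hedging in your stage two (``actually $\tau$ need not be a loop, so\dots'') confirms the issue was seen but not repaired: the proposed second-stage families also have free endpoint at $\tau_1$ and are paths, not loops. Since the lemma asserts a homotopy $U_P\times I\to\Lambda$ that is stationary on $\Lambda$, concatenating two homotopies through $PM$ does not prove it.

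The repair is to interleave the two stages rather than run them in sequence: at each $s$ one undoes only part of the $h$-isotopy and simultaneously appends a compensating partial geodesic stick so that the loop stays closed. That is what the paper's single formula does. It sets $k_P^s(\tau)(t)=h\bigl(\overline{\tau_0\tau_1}(st),\,\overline{\tau_0\tau_1}(t)\bigr)^{-1}(\tau(t))$ --- a path from $\tau_0$ to $\overline{\tau_0\tau_1}(s)$ --- and defines $H(\tau,s)=k_P^s(\tau)\star\overline{\tau_0\tau_1}[0,s]^{-1}$. The reversed partial stick closes the loop at every $s$; one then checks $H(\cdot,0)=k_P$, $H(\cdot,1)=R_P$, $H(\tau,s)(0)=\tau_0$, and $H(\gamma,s)=\gamma$ for $\gamma\in\Lambda$. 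Your two ingredients are exactly the right ones; they must only be woven into a single family rather than performed consecutively.
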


\begin{proof}
Define a homotopy $H\colon U_P\times I\to \Lambda$ by 
\begin{equation}  \label{equP}
H(\tau,s)=k_{P}^{s}(\tau )\star \overline{\tau_0\tau_1}\,[0,s]^{-1}
\end{equation}
where 
\begin{equation*}
k_{P}^{s}(\tau )(t)=h\big(\overline{\tau _{0}\tau _{1}}(st),\overline{\tau
_{0}\tau _{1}}(t)\big)^{-1}(\tau (t)).
\end{equation*}
The right side of (\ref{equP}) is the $\star $-concatenation of two paths;
the first, $k_{P}^{s}(\tau )$, begins ($t=0$) at 
\begin{equation*}
h\big(\overline{\tau _{0}\tau _{1}}(0),\overline{\tau _{0}\tau _{1}}(0)\big)%
^{-1}(\tau _{0})=h(\tau _{0},\tau _{0})^{-1}(\tau _{0})=\tau _{0}
\end{equation*}%
and ends ($t=1$) at 
\begin{equation*}
h\big(\overline{\tau _{0}\tau _{1}}(s),\overline{\tau _{0}\tau _{1}}(1)\big)%
^{-1}(\tau_1)=h\big(\overline{\tau _{0}\tau _{1}}(s),\tau _{1})\big)%
^{-1}(\tau _{1})=\overline{\tau _{0}\tau _{1}}(s).
\end{equation*}%
The second is a "stick" beginning at $\overline{\tau _{0}\tau _{1}}(s)$ and
ending at $\tau _{0}$. \ Thus $H(\tau,s)\in \Lambda $ for all $\tau \in
U_{P} $ and $s\in \lbrack 0,1]$. \ When $s=0$ the first path is 
\begin{equation*}
k_{P}^{0}(\tau )(t)=(h(\tau _{0},\overline{\tau _{0}\tau _{1}}(t)))^{-1}\tau
(t)=k_{P}(\tau )(t)
\end{equation*}%
and the second is the constant path at $\tau _{0}$.  Thus 
\begin{equation*}
H(\tau,0)=k_{P}(\tau)
\end{equation*}%
We leave it to the reader to check that 
\begin{equation*}
H(\tau,1)=R_{P}(\tau)
\end{equation*}%
and that $H(-,s)$ fixes $\Lambda$ for all $s$. Moreover, at all time $s$ we
have that $H(\tau,s)(0)=\tau_0$.
\end{proof}

One could also prove that $k_P$ and $R_P$ are deformation retraction of $P$ onto $\La$ relative to the constant loops $M$, but we will not need this statement.

\subsection{Tubular neighborhood for the coproduct and equivalence to
Goresky-Hingston's definition of the homology coproduct and cohomology product}

\label{sec:GHtub}

In this section, we use the tubular neighborhood of the loop space inside the path space to show the equivalence between our definition of the homology coproduct and the one given in \cite{GorHin}. We also separately prove that the dual cohomology product is also equivalent to the cohomology product of \cite{GorHin}, since that product is not directly defined as a dual of the coproduct in that paper.

\smallskip

For the coproduct, the figure-eight space used in the homology product  is replaced by the
space 
\begin{equation*}
\F=\{(\gamma ,s)\in \Lambda \times I\ |\ \gamma (s)=\gamma
(0)\}\subset \Lambda \times I.
\end{equation*}
This space can be defined as a pull-back in the following diagram: 
\begin{equation*}
\xymatrix{ & \F\ \ \ar@{^(->}[r]\ar[d]_{e} & \ \La\x I \ar[d]^{e_I} \\ TM
\ar[r] \ar@{-->}@/_1pc/[rr]_(.4){\nu_M} & M\ \ \ar@{^(->}[r]^\De & \ M\x M }
\end{equation*}%
where $e_I$ as before denotes the map taking $(\gamma ,s)$ to $(\gamma_0,\gamma (s))$, for $\ga_0:=\ga(0)$ as above. Again we can pull-back the $\varepsilon $-tangent bundle
of $M$ along the evaluation at 0 to get a bundle $e^{\ast }(TM)\rightarrow F$%
.

The original construction of the coproduct in \cite{GorHin} used that there
exists a tubular neighborhood of $\F_{(0,1)}:=\F\cap (\Lambda \times (0,1)) $
inside $\Lambda \times (0,1)$. It is argued in \cite{GorHin} that such a
tubular neighborhood exists, but, just as for the Chas-Sullivan product, we
need an explicit construction to be able to compare the resulting coproduct
to the one defined in Section~\ref{sec:newGH}.

\smallskip

Consider the bundle $e^{\ast }(TM)\rightarrow \F_{(0,1)}$.

\begin{prop}
\label{coemb} The embedding $\F_{(0,1)}\subset \Lambda\times (0,1)$ admits a
tubular neighborhood in the sense that there is an embedding 
\begin{equation*}
\nu_{\GH}\colon \xymatrix{e^{\ast }(TM)\ \ar[d] \ar@{^(->}[r] & \Lambda \times
(0,1) \\ \F_{(0,1)} &}
\end{equation*}
that restricts to the inclusion on the zero-section $\F_{(0,1)}$ and is a
homeomorphism onto its image $U_{\GH}=\{(\gamma,s)\in \Lambda\times (0,1)\ |\
|\gamma(0)-\gamma(s)|<\varepsilon \}$. Moreover this tubular neighborhood
restricts to a tubular embedding over each $s\in (0,1)$ and is compatible
with that of the diagonal in $M^2$ in the sense that the following diagram
commutes: 
\begin{equation*}
\xymatrix{e^*(TM)\ \ \ar@{^(->}[r]^{\nu_{\GH}}\ar[d]_{e} &\  \La\x
(0,1)\ar[d]^{e_I}\\ TM\ \ \ar@{^(->}[r]^{\nu_M}&\  M\x M }
\end{equation*}
where $\nu_M$ is the map (\ref{nuM}) and $e_I\colon \Lambda\times (0,1)\to M$
takes $(\gamma,s)$ to $(\gamma_0,\gamma(s))$.
\end{prop}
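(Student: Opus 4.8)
The plan is to mimic the proof of Proposition~\ref{TubCS}, replacing the diffeomorphism-pushing argument with the version that keeps a basepoint fixed while moving a second point, exactly as in Lemma~\ref{pathtub}, but now letting the second point vary with the parameter $s\in(0,1)$. Concretely, given $(\gamma,s)\in F_{(0,1)}$ and $V\in T_{\gamma_0}M=T_{\gamma(s)}M$, I would define
\begin{equation*}
\nu_{G\!H}(\gamma,s,V)=(\lambda(\gamma,s,V),s)
\end{equation*}
where $\lambda(\gamma,s,V)$ is obtained from $\gamma$ by applying, near the self-intersection point $\gamma(s)=\gamma_0$, the diffeomorphism $h(\gamma_0,\exp_{\gamma_0}V)$ of Lemma~\ref{H} only on the portion of the loop that passes through $\gamma(s)$ from ``inside'' — that is, on the second strand $\gamma[s,1]$ near its endpoints — while leaving the basepoint $\gamma(0)$ fixed. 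The cleanest way to organize this is to think of a loop $\gamma$ with $\gamma(0)=\gamma(s)$ as a pair of paths (the two arcs $\gamma[0,s]$ and $\gamma[s,1]$ sharing both endpoints), apply the path-level construction $\nu_P$ of Lemma~\ref{pathtub} to the arc $\gamma[s,1]$ (pushing its endpoint $\gamma(1)=\gamma(0)$ to $\exp_{\gamma_0}(V)$ while fixing $\gamma(s)$), and reassemble; the result is a loop whose value at time $0$ is unchanged and whose value at time $s$ has moved to $\exp_{\gamma_0}(V)$, so that it now lies in $U_{GH}$ rather than in $F_{(0,1)}$.

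The key steps, in order, are: (1) write down $\nu_{G\!H}$ as above, using $h$ applied in a neighborhood of $\gamma(s)$ determined by the $H^1$-path structure, being careful that the pushing map is supported near $\gamma(s)$ and therefore does not disturb $\gamma(0)$ provided $|\gamma(0)-\gamma(s)|<\varepsilon<\rho/14$; (2) check that $\nu_{G\!H}$ restricts to the inclusion on the zero section $V=0$ (immediate from $h(u,u)=\id$) and that its image is exactly $U_{GH}$, since the endpoint of the pushed arc can be any point within distance $\varepsilon$ of $\gamma_0$; (3) construct the inverse $\kappa_{G\!H}\colon U_{GH}\to e^*(TM)$ by $\kappa_{G\!H}(\gamma,s)=(\gamma',s,V)$ with $V=\exp_{\gamma_0}^{-1}(\gamma(s))$ and $\gamma'$ the loop obtained by applying $h(\gamma_0,\gamma(s))^{-1}$ near $\gamma(s)$ to bring the two strands back together, and verify $\nu_{G\!H}\circ\kappa_{G\!H}=\id$ and $\kappa_{G\!H}\circ\nu_{G\!H}=\id$ from the corresponding identities for $h$, just as in Proposition~\ref{TubCS} and Lemma~\ref{pathtub}; (4) observe that the construction is fibered over $s\in(0,1)$ — nothing in the formulas changes $s$ — so it restricts to a tubular embedding over each fixed $s$; and (5) read off the compatibility square with $\nu_M$ from $\nu_{G\!H}(\gamma,s,V)(0)=\gamma_0$ and $\nu_{G\!H}(\gamma,s,V)(s)=\exp_{\gamma_0}(V)$, which says precisely $e_I\circ\nu_{G\!H}=\nu_M\circ e$.

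The main obstacle I anticipate is handling the two strands through $\gamma(s)$ simultaneously and continuously in $s$: unlike in Lemma~\ref{pathtub}, where a single path has a single endpoint, here the loop passes through the point $\gamma(s)$ twice (once at time $0$/$1$ as the basepoint and once at time $s$), and as $s\to 0$ or $s\to 1$ the two crossing times collide. One must check that the localized pushing map is applied consistently — e.g.\ only to the arc $\gamma[s,1]$, tapering to the identity at both of its endpoints, so that the basepoint $\gamma(0)$ and the matching condition at time $s$ are respected and the reassembled loop is genuinely $H^1$ and depends continuously on $(\gamma,s,V)$. This is the place where the explicit cutoff $\mu$ in Lemma~\ref{H} and the bound $\varepsilon<\rho/14$ do the real work, and it is worth noting that the construction need only produce a tubular neighborhood of $F_{(0,1)}$ in $\Lambda\times(0,1)$ (not over the closed interval), which is why restricting to $s\in(0,1)$ avoids the degenerate collision at the endpoints. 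The remaining verifications are formal and parallel to the already-given proofs of Proposition~\ref{TubCS} and Lemmas~\ref{H} and~\ref{pathtub}.
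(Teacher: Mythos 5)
Your high-level idea — split the loop at time $s$ into two arcs and re-use the path-level tubular neighborhood $\nu_P$ of Lemma~\ref{pathtub} — is exactly the paper's strategy. But the core step, the actual formula for $\nu_{G\!H}$, is wrong as you have written it, and it is wrong in a way that the stated outcome cannot hold. You propose to apply $\nu_P$ only to the second arc $\gamma[s,1]$, ``pushing its endpoint $\gamma(1)=\gamma(0)$ to $\exp_{\gamma_0}V$ while fixing $\gamma(s)$,'' and to leave the first arc $\gamma[0,s]$ untouched. Trace through what this produces: the reassembled path has $\tau(0)=\gamma(0)$ (from the untouched first arc), $\tau(s)=\gamma(s)=\gamma_0$ (still the old self-intersection point, since $\nu_P$ \emph{fixes} the start of the arc it is applied to), and $\tau(1)=\exp_{\gamma_0}V$. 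This is not a loop at all ($\tau(0)\neq\tau(1)$), and in particular $\tau(s)$ has not moved, contradicting the sentence that follows your construction. If instead you flip which endpoint of $\gamma[s,1]$ you push — say by applying $\nu_P$ to $\gamma[s,1]^{-1}$ so as to move the value at time $s$ and fix the value at time $1$ — you rescue the loop condition but create a discontinuity at $s$: the left-hand arc $\gamma[0,s]$ still ends at $\gamma_0$, while the right-hand arc now starts at $\exp_{\gamma_0}V$. There is no way to do this with one arc only.

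The resolution, which is what the paper does, is to apply $\nu_P$ to \emph{both} arcs, each treated as a based loop at the self-intersection point:
\begin{equation*}
\nu_{G\!H}(\gamma,s,V)=\Bigl(\bigl(\nu_P(\gamma[0,s],V)\bigr)\ast_s\bigl(\nu_P(\gamma[s,1]^{-1},V)\bigr)^{-1},\,s\Bigr).
\end{equation*}
Applying $\nu_P$ to $\gamma[0,s]$ fixes $\gamma(0)$ and pushes the arc's endpoint ($=\gamma(s)$) to $\exp_{\gamma_0}V$; applying $\nu_P$ to $\gamma[s,1]^{-1}$ fixes $\gamma(1)$ and pushes the arc's endpoint ($=\gamma(s)$ again) to $\exp_{\gamma_0}V$; after the outer reversal and concatenation at time $s$ the result is a genuine loop $\tau$ with $\tau(0)=\tau(1)=\gamma(0)$ and $\tau(s)=\exp_{\gamma_0}V$. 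Your items (2)--(5) — zero-section behavior, constructing the inverse $\kappa_{G\!H}$ from $\kappa_P$, fiberedness over $s\in(0,1)$, and the compatibility square with $\nu_M$ — are all stated correctly and would then go through verbatim, as would your remark that the degenerate collisions only occur at $s=0,1$, which is why the construction is only over $(0,1)$. The missing idea is simply that both strands through the self-intersection point must be deformed simultaneously, using the basepoint $\gamma(0)=\gamma(1)$ (not $\gamma(s)$) as the fixed anchor on each arc.
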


\begin{proof}
Let $(\gamma ,s,V)\in (e^{\ast }(TM)\rightarrow \F_{(0,1)})$, so $\gamma $
satisfies $\gamma (s)=\gamma_0$ and $V\in T_{\gamma_0}M=T_{\gamma (s)}M$
is a vector with $|V|<\varepsilon $ and $0<s<1$.

Recall from Lemma~\ref{pathtub} the map $\nu _{P}\colon (e_{1}^{\ast
}(TM)\rightarrow \Lambda )\longrightarrow PM$. To define $\nu _{\GH}$, we
apply $\nu _{P}$ to $\gamma \lbrack 0,s]$, the restriction of $\gamma $ to
the interval $[0,s]$ (rescaled to be parametrized again by $[0,1]$), and to $%
\gamma \lbrack s,1]^{-1}$, the path $\gamma \lbrack s,1]$ taken in the
reverse direction: Define $\nu _{\GH}(\gamma ,s,V)=(\tau ,s)$ with 
\begin{equation*}
\tau =\big(\nu _{P}(\gamma \lbrack 0,s],V)\big)\ast _{s}\big(\nu _{P}(\gamma
\lbrack s,1]^{-1},V)\big)^{-1}
\end{equation*}%
and where $\ast _{s}$ indicates that the concatenation takes place at time $%
s $.

Note first that this is a loop, as $\nu _{P}(\gamma \lbrack 0,s],V)$ is a
path starting at $\gamma_0=\gamma_1$ and ending at $\exp _{\gamma (s)}V$%
, just like $\nu _{P}(\gamma \lbrack s,1]^{-1},V)$. As the latter is
reversed one more time, the two paths can be glued together. The glued path $%
\tau $ has $\tau_0=\gamma_0$ and $\tau (s)=\exp _{\gamma (s)}V$. In
particular, $|\tau (s)-\tau_0|<\varepsilon$.

Let $U_{\GH}^{(0,1)}=\{(\tau ,s)\in \Lambda \times (0,1)\ |\ |\tau (s)-\tau_0|<\varepsilon \}$. We define an inverse map 
\begin{equation*}
\kappa _{\GH}\colon U_{\GH}^{(0,1)}\longrightarrow \big(e^{\ast }(TM)\rightarrow
\F_{(0,1)}\big)
\end{equation*}%
by setting $\kappa _{\GH}(\tau ,s)=(\gamma ,s,V)$ with $V=\exp _{\tau_0}^{-1}(\tau(s)) $, and 
\begin{equation}\label{equ:kGH}
\gamma :=k_{\GH}(\tau ,s)=\big(k_{P}(\tau \lbrack 0,s])\big)\ast _{s}\big(%
k_{P}(\tau \lbrack s,1]^{-1})\big)^{-1}
\end{equation}%
where $k_{P}$ is the map of Lemma~\ref{pathtub}. We check that $\gamma $ is
well-defined: $\tau \lbrack 0,s]$ is a path with endpoints at distance at
most $\varepsilon $, so it is in the source of the generalized map $k_{P}$.
Likewise for $\tau \lbrack s,1]^{-1}$. Moreover, the image of those paths
under $k_{P}$ are loops based at $\gamma_0=\gamma_1$. Hence it makes
sense to concatenate them, yielding a loop based at that point, now
parametrized by $[0,1]$ and with a self-intersection at time $s$.

The two maps $\nu_{\GH}$ and $\kappa_{\GH}$ are continuous and are
inverse for each other because $\nu_P$ and $\kappa_P$ are inverse of each
other.
\end{proof}

\begin{rem}
\label{rem:J} The constructions on $\F_{(0,1)}$ could also be done on 
\begin{equation*}
\F_{\frac{1}{2}}=\{\gamma \in \Lambda \ |\ \gamma (\frac{1}{2})=\gamma_0\}
\end{equation*}
and then expanded to the remainder of $\F_{(0,1)}$ using the natural map $\F_{s}\overset{\approx }{\rightarrow }\F_{\frac{1}{2}}$ induced by
precomposing with the piecewise linear bijection $\theta _{\frac{1}{2}%
\rightarrow s}\colon [0,1]\rightarrow [0,1]$ that takes $0\rightarrow 0$, $%
\frac{1}{2}\rightarrow s$, $1\rightarrow 1$. These maps appear in the
definition of the cohomology product in \cite[Sec 9.1]{GorHin}. Note
that when $s=0$ we have a map but not a bijection $\F_{0}\rightarrow \F_{\frac{%
1}{2}}$; the map $\nu _{\GH}$ also does not extend to $s=0,1$.
\end{rem}

The map 
\begin{equation*}
k_{\GH}\colon U_{\GH}^{(0,1)}\longrightarrow \F_{(0,1)}
\end{equation*}
of (\ref{equ:kGH}) in the proof above is a 
retraction that preserves $s\in (0,1)$. We will now show that it is
homotopic to our stick map $R_{\GH}$ from Section~\ref{sec:newGH} restricted
to $U_{\GH}^{(0,1)}=U_{\GH}\cap \Lambda\times (0,1)$, with both maps defining deformation retractions.

\begin{lem}
\label{lem:GHhtpy} The maps $k_{\GH},R_{\GH}\colon U_{\GH}^{(0,1)}\longrightarrow
\F_{(0,1)}$ are homotopic. Moreover, $k_{\GH}$ defines a deformation retration of $U_{\GH}^{(0,1)}$ onto $\F_{(0,1)}$, and 
$R_{\GH}\colon U_{\GH}\longrightarrow \F$   a deformation retraction of $U_{\GH}$ onto $\F$. 
\end{lem}

\begin{proof}
The first part of the statement follows from applying Lemma~\ref{lem:htpypath} on both sides of the
concatenation at $s$, keeping $s$ constant. Because the homotopy in  Lemma~\ref{lem:htpypath} fixes $\La$ and $s$ is kept constant, the resulting homotopy will fix $\F_{(0,1)}$. Hence for the second part of the statement it is enough to show that $R_{\GH}$ is a deformation retraction, keeping $s$ fixed.

So we are left to find a homotopy $G\colon U_{\GH}\x I\to U_{\GH}$ between $j\circ R_{\GH}$ and the identity that restricts to the identity on $\F$ and keeps $s$ constant, where $j\colon \F\to U_{\GH}$ denotes the inclusion.  Such a homotopy can be obtained by retracting the geodesic sticks added by $R_{\GH}$ when $(\ga,s)\notin\F$, just as in the case of $R_{\CS}$ in Lemma~\ref{retract}.  
\end{proof}

\begin{prop}
\label{prop:coequiv} The tubular neighborhood $\nu_{\GH}$ allows us to give
an explicit description of the Goresky-Hingston coproduct \cite[8.4]{GorHin}, and the coproduct $\vee$ of Definition~\ref{def:stickyco} is a chain model
for it.
\end{prop}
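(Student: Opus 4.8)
My plan is to follow the same strategy as in the proof of Proposition~\ref{prop:CSequiv}. Recall that Goresky and Hingston define their coproduct in \cite[8.4]{GorHin} as the composite of: crossing a relative cycle in $C_*(\La,M)$ with the interval; the Thom collapse map onto the Thom space of a tubular neighborhood of $F_{(0,1)}$ inside $\La\x(0,1)$; the Thom isomorphism, landing in $C_{*+1-n}(F_{(0,1)})$; and the cutting map to $C_{*-n+1}(\La\x\La)$, everything taken relative to the subspaces ($M$, $\La\x\{0,1\}$, and $M\x\La\cup\La\x M$) needed for the composite to be defined and to vanish on constant loops. Since \cite{GorHin} only establishes the \emph{existence} of such a tubular neighborhood, the first step is to observe that their construction is independent of the choice: any two tubular neighborhoods of $F_{(0,1)}$ in $\La\x(0,1)$ are ambient isotopic, hence have homotopic collapse maps, hence induce chain-homotopic coproducts. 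This reduces everything to the explicit $\nu_{GH}$ of Proposition~\ref{coemb}, and in particular proves the first assertion of the proposition.

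Next I would repeat the argument of Proposition~\ref{prop:CSequiv} in the present setting. The collapse map associated to $\nu_{GH}$ extends the inverse $\kappa_{GH}$ and, on $U_{GH}^{(0,1)}$, has two components $(k_{GH},V)$, where $k_{GH}$ is the $s$-preserving retraction appearing in the proof of Proposition~\ref{coemb} and $V(\ga,s)=\exp_{\ga(0)}^{-1}(\ga(s))$. By Lemma~\ref{lem:GHhtpy} the retraction $k_{GH}$ is homotopic to the stick retraction $R_{GH}$ of Section~\ref{sec:newGH} restricted to $U_{GH}^{(0,1)}$, through a homotopy preserving $s$; hence $(k_{GH},V)$ may be replaced by $(R_{GH},V)$ in the collapse map. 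The compatibility of $\nu_{GH}$ with $\nu_M$ recorded in Proposition~\ref{coemb} says exactly that $\tau_{GH}=e_I^*\tau_M$ is a Thom class for $\nu_{GH}$, so by the naturality of the cap product with Thom classes (Appendix~\ref{app:cap}) the ``collapse $+$ Thom isomorphism'' step of the Goresky--Hingston construction is modeled on chains by $A\mapsto R_{GH}(\tau_{GH}\cap(A\x I))$. Composing with $cut$ gives precisely Definition~\ref{def:stickyco}.

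The step I expect to be the main obstacle is the bookkeeping at the endpoints $s=0,1$ together with the passage to the relative and quotient complexes. The neighborhood $\nu_{GH}$ lives only over $(0,1)$ and degenerates as $s\to 0,1$ (cf.~Remark~\ref{rem:J}), while Definition~\ref{def:stickyco} is phrased over the closed interval $I$ and in the quotient by $C_*(M\x I)+C_*(\La\x\{0\})+C_*(\La\x\{1\})$. One must check that $R_{GH}$ extends continuously to $s\in\{0,1\}$ as the identity (which it does, as noted right after its definition, so extending over $I$ is harmless), that capping with $\tau_{GH}$ followed by $R_{GH}$ respects the relevant quotient maps so that the composite is well defined on the stated relative complexes, and that, after identifying the relative homology of $(\La\x I,\,M\x I\cup\La\x\{0,1\})$ correctly, the resulting map coincides on homology with the Goresky--Hingston coproduct. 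A secondary point is matching parametrization conventions: \cite{GorHin} works with the model $F_{1/2}$ and the rescaling maps $\theta_{\frac12\rightarrow s}$ of Remark~\ref{rem:J}, so I would spell out the homeomorphisms $F_s\overset{\approx}{\to}F_{1/2}$, check they are compatible with our $F_{(0,1)}$-model and with $\nu_{GH}$ restricting to a tubular embedding over each $s$, and verify orientation conventions on the fibres so that no spurious sign appears.
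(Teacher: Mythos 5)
Your proposal is essentially the paper's argument, with two points worth flagging. First, the opening claim that any two tubular neighborhoods of $F_{(0,1)}$ in $\Lambda\times(0,1)$ are ambient isotopic (and hence give homotopic collapse maps) is asserted without justification; in the Hilbert-manifold setting this is not an off-the-shelf fact, and the paper does not rely on it. Instead, the paper just works with the explicit $\nu_{GH}$ and compares it directly to the Goresky--Hingston construction via the $\alpha$-neighborhoods $V_\alpha$ that appear in their limit argument; no uniqueness statement for tubular neighborhoods is invoked. Second, what you describe as ``bookkeeping at the endpoints'' is in fact the heart of the matter: the middle map of the Goresky--Hingston construction is defined as a limit $\alpha\to 0$ of maps $H_*(\Lambda\times(0,1),V_\alpha^{(0,1)})\to H_{*-n}(F_{(0,1)},F_{(0,1)}\cap V_\alpha^{(0,1)})$, and the existence of this limit is a nontrivial argument in \cite{GorHin}. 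The paper's observation is not merely that $R_{GH}$ extends continuously to $s=0,1$, but that because $R_{GH}$ is defined without any tubular neighborhood it already makes sense at $\alpha=0$ on the chain level; this simultaneously reproves that the limit exists and identifies it with $\vee$. You should make that identification with the limit explicit rather than deferring it to a cleanup step. With those two adjustments the rest of your outline (collapse map $=(k_{GH},V)$, Lemma~\ref{lem:GHhtpy}, compatibility of $\tau_{GH}$ with $\tau_M$, naturality of the cap product, composition with $cut$) matches the paper's proof.
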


\begin{proof}
The definition of the Goresky-Hingston coproduct is made most precise in the
proof of Lemma~8.3 of the paper \cite{GorHin}, and we recall it here. The
coproduct is defined via a sequence of maps 
\begin{equation*}
\begin{aligned} H_*(\Lambda,M) &\longrightarrow H_{*+1}(\Lambda\times I,\B) \longrightarrow
H_{*+1-n}(\F,\B)  \longrightarrow H_{*+n-1}(\Lambda\times \Lambda,M\times \Lambda\cup
\Lambda \times M) \end{aligned}
\end{equation*}
with $\B=M\times I\cup \Lambda\times \{0,1\}$ as before, where the first map crosses with $I$ and the last map is the cut-map, just
as in our definition of the coproduct $\vee$ (Definition~\ref{def:stickyco}%
). The middle map needs to be constructed, and the paper \cite{GorHin}
produces it as follows.

For a small $\alpha>0$, let 
\begin{equation*}
V_\alpha=\Lambda^{<\alpha}\times I\ \cup\ \Lambda\times \big([0,\alpha)\cup
(1-\alpha,1]\big),
\end{equation*}
where $\Lambda^{<\alpha}$ is the space of loops of energy smaller than $%
\alpha^2$. Let $V_\alpha^{(0,1)}=V_\alpha\cap \big(\Lambda\times (0,1)\big)$%
. A tubular neighborhood of $\F_{(0,1)}$ inside $\Lambda\times (0,1)$
together with excision allows to define maps 
\begin{equation*}
\xymatrix{ H_*(\La\x (0,1),V_\alpha^{(0,1)}) \ar[d]_\cong\ar[r]^-{[\tau\cap]}_-\cong &
H_{*-n}(\F_{(0,1)},\F_{(0,1)}\cap V^{(0,1)}_\alpha) \ar[d]^\cong\\ H_*(\La\x
I,V_\alpha) & H_{*-n}(\F_{[0,1]},\F_{[0,1]}\cap V_\alpha)}
\end{equation*}
where the horizontal map is defined by capping with the Thom class and
retracting (or equivalently applying the Thom collapse map followed by the
Thom isomorphism, see the proof of Proposition~\ref{prop:CSequiv}). Then it is argued in \cite{GorHin}
that taking a limit for $\alpha\to 0$ gives a well-defined map 
\begin{equation*}
H_*(\Lambda\times I,\B)\longrightarrow H_{*-n}(\F_{[0,1]},\B),
\end{equation*}
which finishes the definition of the coproduct.

We have here given an explicit tubular neighborhood of $\F_{(0,1)}$ inside $%
\Lambda\times (0,1)$ and Lemma~\ref{lem:GHhtpy} shows that the retraction
map $k_{\GH}$ associated to this tubular neighborhood is homotopic to the map 
$R_{\GH}$ used in our definition of the coproduct, and the Thom classes are compatible, in both cases coming from the Thom class $\tau_M$ of the tangent bundle. We thus have chain
homotopic maps 
\begin{equation*}
\xymatrix{C_*(\La\x (0,1),V_\alpha^0) \ar[r]^-{\tau_{\GH}\cap}&
C_{*-n}(U_{\GH}\cap \La\x (0,1),U_{\GH}\cap V_\alpha^0)
\ar@<-3ex>[d]_{k_{\GH}}\ar@<3ex>[d]^{R_{\GH}} \\ &
C_{*-n}(\F_{(0,1)},\F_{(0,1)}\cap V^0_\alpha). }
\end{equation*}
Now the definition of $R_{\GH}$ does not need a tubular neighborhood and is
well-defined also when $\alpha=0$. This provides a different proof that the
limit as $\alpha$ tends to 0 exists, already on the chain level using the
map $R_{\GH}$. At the same time it shows that our definition of the coproduct
agrees in homology with that of \cite{GorHin}.
\end{proof}

The cohomology product is defined in \cite{GorHin} using a slightly
different sequence of maps than the dual of the above maps, to avoid
complications with taking limits. As we do not need to take limits with our
new definition, it is now easier to show that this other construction is
indeed the dual of the coproduct $\vee$.

\begin{thm}
\label{thm:cocoequ} The cohomology product 
\begin{equation*}
\circledast_{\Thom}\colon H^p(\Lambda,M)\otimes H^q(\Lambda,M)\to H^{p+q-1+n}(\Lambda,M)
\end{equation*}
induced in cohomology by the map $\oast_{\Thom}$ of Definition~\ref{def:cast}
agrees with the cohomology product defined in \cite[Sec 9]{GorHin} up to the sign $(-1)^{q(n-1)}$.
\end{thm}

\begin{proof}
In \cite{GorHin}, the cohomology product is  defined as $(-1)^{q(n-1)}\widetilde\vee^*(x\times y)$, for a map $\widetilde \vee$ which
we recall below. We will show that $\vee_{\Thom}$ and $\widetilde \vee$ induce the
same map on cohomology.

Recall from Remark~\ref{rem:J} the reparametrization map $\theta _{\frac{1}{2%
}\rightarrow s}$, which is defined for all $s\in [0,1]$. This can be used to
define a map $J\colon \Lambda\times I\to \Lambda$ by $J(\gamma,s)=\gamma\circ
\theta _{\frac{1}{2}\rightarrow s}$. Note that $J$ restricts to a map $%
J\colon \F\to \F_{\frac{1}{2}}$ and also commutes with the evaluation
at 0 and defines a map of pairs: 
\begin{equation*}
\xymatrix{(\La\x I,\La\x \del I \cup M\x I) \ar[rr]^-J \ar[dr]_{e} & & (\La, M*_{\frac{1}{2}}\!\La\cup \La*_{\frac{1}{2}}\!M) \ar[dl]^{e}\\ & M & }
\end{equation*}
where $ M*_{\frac{1}{2}}\!\La\cup \La*_{\frac{1}{2}}\!M$ is the subspace of $\Lambda$ of
loops that are constant on their first or second half. Now consider the
diagram 
\begin{equation*}
\xymatrix{H^*(\La,M) \ar@{=}[r] & H^*(\La,M) \\ H^{*+1}(\La\x I,\B) \ar[u]_{(\x I)^*} \ar@{=}[r] & H^{*+1}(\La\x I,\B) \ar[u]^{(\x I)^*}\\
  H^{*+1}(\La,  M*_{\frac{1}{2}}\!\La\cup \La*_{\frac{1}{2}}\!M)
\ar[u]_-{J^*} & \\ H^{*+1-n}(U_{\frac{1}{2}}, M*_{\frac{1}{2}}\!\La\cup \La*_{\frac{1}{2}}\!M) \ar[u] _{e^*\tau_M\cup} \ar[r]_-{J^*} &
H^{*+1-n}(U_{\GH},\B) \ar[uu]^{\tau_{\GH}\cup}\\
H^{*+1-n}(\F_{\frac{1}{2}},  M*_{\frac{1}{2}}\!\La\cup \La*_{\frac{1}{2}}\!M)
\ar[u]_{k_{\GH}^*} \ar[r]_-{J^*}& H^{*+1-n}(\F ,\B)
\ar[u]^{R_{\GH}^*}\\ H^{*+1-n}(\La\x\La,M\x \La\cup \La\x M) \ar[u]_{\cut^*}
\ar@{=}[r] & H^{*+1-n}(\La\x\La,M\x \La\cup \La\x M) \ar[u]^{\cut^*} }
\end{equation*}
where $U_{\frac{1}{2}}:=U_{\GH}\cap \Lambda\times\{\frac{1}{2}\}$, and $%
k_{\GH} $ is the restriction to $U_{\frac{1}{2}}$ of the retraction map of
the same name defined above. The right vertical composition is our
definition of $\vee^*_{\Thom}$, while the left composition is an explicit
description of the map $\widetilde \vee^*$ defining the cohomology product
in \cite{GorHin} (see Figure 3 in Section 9 of that paper, where the sign $%
(-1)^{q(n-1)}$ is encoded in the letter $\omega$). So we are left to show
that this diagram commutes. The first square trivially commutes. The second
square commutes by naturality of the cap product (see Appendix~\ref{app:cap}%
) using that $\tau_{\GH}=J^*e^*\tau_M$ by the commutativity of the triangle
above. The third square commutes up to homotopy on the space level by Lemma~%
\ref{lem:GHhtpy}. Finally the last square commutes on the space level.

This shows that our definition is equivalent to that of \cite{GorHin}. Now
statement (a) follows from \cite[Prop 9.2]{GorHin} and statement (b) from
our definition~\ref{def:castalg}.
\end{proof}

\subsection{The algebraic homology coproduct and cohomology product}

In this section, we give a detailed proof of the algebraic properties
of our chosen sign convention for the homology coproduct and the
cohomology product, because it differs from what it suggested in
\cite{GorHin}.

\begin{thm}
  \label{thm:coGHalg} The algebraic homology coproduct
  $$\vee\colon
H_{k-n}(\Lambda,M)\to  \oplus_{p+q=k+1}H_{p-n}(\Lambda,M)\ot H_{q-n}(\Lambda,M)$$ 
is the twisted suspension of a graded coassociative and cocommutative coproduct on the degree-shifted homology $H_{*-n}(\La,M)$.
Dually, the algebraic cohomology product
$$\circledast\colon H^{p-n}(\La,M)\ot H^{q-n}(\La,M)\rar H^{(p+q-1)-n}(\La,M)$$
is the twisted desuspension of a graded associative and commutative product  on the degree-shifted cohomology $H^{*-n}(\La,M)$. Explicitly, 
for $[x]\in H^{p-n}(\La,M), [y]\in H^{q-n}(\La,M), [z]\in H^{r-n}(\La,M)$
$$[x]\oast [y]=(-1)^{pq+1}[y]\oast [x] \ \ \ \textrm{and}\ \ ([x]\oast [y])\oast [z]=(-1)^{r+1}[x]\oast ([y]\oast [z]).$$
\end{thm}

In the statment, the {\em twisted suspension} refers to the suspension of an operad by the homology of the one-point compactification of the simplex operad, as described eg.~in \cite[Sec. 4.1]{Kla13B}. This twisted suspension can be thought of as adding a length parameter to the inputs of the operations (or outputs in the case of a coproduct), with the lengths summing to 1.  The symmetric group action, that governs the commutativity rules for the operations, permutes the lengths, which gives a ``twisting'' in the suspension. These lengths in the case of our coproduct are the parameters $(s,1-s)$. The fact that the symmetry switches these parameters corresponds to a flip of the interval $I$ parametrizing the coproduct, and explains the ``$+1$'' in the sign $(-1)^{pq+1}$ in the graded commutativity. (We will also see this flip of the interval in the proof below.) 
And the fact that it is the suspension of a (co)associative operation gives the sign $(-1)^{r+1}$ in the associativity relation: if $\vee A=(s\ot s)\vee_0 (s^{-1}A)$ for $\vee_0$ a degree 0 coproduct satisfying that $(1\ot \vee_0)\circ \vee_0=(\vee_0\ot 1)\circ \vee_0$, one get such a sign for $\vee$ with our convention that $\vee$ acts from the right. (The twisting is not visible in the associativity relation.)

Recall that the cohomology product is not  counital, as can be computed in the case of spheres \cite[15.3]{GorHin}; morally, the only reasonable counit would be the empty set of loops, which is not part of our set-up. 

\begin{rem}[Compatibility with known results in Hochschild homology]\label{rem:HH}
The coproduct is an operation of degree $1-n$, and the statement above treats these two degree shifts separately, with $n$ becoming a shift of the homology and 1 a shift of the operation. This is necessary because the two shifts have different behavior. One way to understand why this could be expected is to look at the Hochschild homology model of the loop space.

The Jones isomorphism \cite{Jon87}  states that, over fields,  if $M$ is simply-connected, 
\begin{equation}\label{equ:Jones}
  H^*(LM)\cong HH_*(C^*(M),C^*(M)).
\end{equation}
Moreover, over $\mathbb{Q}$, by \cite{LamSta} %(see also \cite[6.6]{WahWes}), we can get 
there exists a  model for $C^*(M)$ as a strict Poincar\'e duality cdga (or {\em Frobenius} cdga). 
Algebraic versions of the Chas-Sullivan and Goresky-Hingston products have been constructed by a number of authors using this  Hochschild model of the loop space, see eg.~\cite{Abb16,Kla13B,Tradler}, and by \cite[Thm 1]{FelTho}  and \cite[Thm 1.3]{NaeWil}, we know that they are compatible with the operations defined here under the isomorphism (\ref{equ:Jones}).
Note that $C^*(M)$ is equivalent to a Poincar\'e duality algebra of {\em dimension $n$}. As explained in \cite[6.3]{WahWes}, the dimension shift in the Poincar\'e structure corresponds to a determinant bundle twisting on the prop of operations acting on the Hochschild complex. This determinant twisting can instead be described as a degree shift of the algebra ($H^*(LM)$ in our case) if we restrict to a product (with degree $n$ desuspension) or a coproduct (degree $n$ suspension). 

Klamt in \cite{Kla13B} describes operations on the Hochschild homology of Frobenius cdgas, including the coproduct as a degree $1$ operation. She shows in Section 4.1 of the paper that the coproduct is such a twisted suspension of a degree 0 coproduct.  This is exactly compatible with our sign computation.

Our result is also compatible with the work of Rivera-Wang \cite{RivWan}, that shows, using completely different methods, that the cohomology product is the $(n-1)$-suspension of a degree 0 associative product, in a situation where there is no commutativity. Indeed, without shifting the degree, the sign we obtain in the associativity relation is $(-1)^{r+n-1}$, which is also the sign that would arise from shifting an associative operation by degree $(n-1)$. 
 % Note that $SD_d$ is a prop and as a prop, it is not true that and $SD_d$-algebra structure on $A$ is the same as an $SD$-algebra structure on $\Si^dA$. The degree shift simply cannot math up for general prop operations. But $SD_d$ is defined from $SD$ by adding ``determinant bundle twists'' which are a form of suspension for a prop... 
\end{rem}

\begin{proof} 
  We first consider the commutativity relation. The proof given here is essentially the same as that of \cite[Prop 9.2]{GorHin}, just formulated for the coproduct and using our definition. We give it for convenience as it is short, and because it exhibits that the interval $I$ parametrizing the coproduct is flipped.

 Let $\chi\colon \La\x I\to \La\x I$ be the map defined by $\chi(\ga,s)=(\ga(s+\_\,),1-s)$, rotating the loop $\ga$ by $s$ and flipping the interval. This map has the effect of exchanging  $\ga[0,s]$ and $\ga[s,1]$ together with their parametrizing lengths $s$ and $1-s$.  It restricts to a map  $\chi\colon U_{\GH}\to U_{\GH}$ and $\chi\colon \F\to \F$, and fits into the following commuting diagrams: 
 $$\xymatrix{\La\x I \ar[d]_{e_I}\ar[r]^-\chi & \La\x I \ar[d]^{e_I} & & & \F  \ar[d]_{\cut}\ar[r]^-\chi & \F \ar[d]^{\cut}  \\
   M\x M \ar[r]^-t & M\x M  & & & \La\x \La \ar[r]^-t & \La\x \La }$$
 for $t\colon X\x Y \to Y\x X$ the twist map.  
 The map $\chi\colon \La\x I\to\La\x I$ is homotopic to $\id\x (-1)$, for $(-1)$ the flip map on the interval: a homotopy $H\colon (\La\x I)\x I\to \La\x I$ can be given by setting $H(\ga,s,r)=(\ga(rs+\_\,),1-s)$.  This homotopy restricts to a homotopy $\chi\simeq_{H}\id\x\{-1\}\colon  \B\x I\to \B$, for $\B=\La\x\del I\cup M\x I$ as it preserves the constant loops and exchanges $s=0$ and $s=1$ for all $r$. 
   (The homotopy $H$ does not restrict to either $U_{\GH}$ or $\F$, but we will not need that.) 
 This shows that 
 $$\chi_*=(\id,(-1))_*\colon H_*(\La\x I,\B) \rar H_*(\La\x I,\B).$$

 We are now ready to compute commutativity of the coproduct, using the following diagram:
  $$\xymatrix{C_*(\La) \ar[r]^-{\x I}\ar[d]_{\id} & C_{*}(\La\x I)\ar[rr]^{[\chi^*\tau_{\GH}\cap]} \ar[d]^{\chi}&& C_{*}(U)\ar[d]^{\chi}\ar[r]^-{R} & C_{*}(\F)\ar[d]^{\chi} \ar[r]^-{\cut} &
    C_{*}(\La\x\La) \ar[d]^t \ar@<1ex>[r]^-{\AW} &\ar@<1ex>[l]  \oplus C_{p}(\La)\ot C_{q}(\La)\ar[d]^{t_\ot}\\
    C_*(\La) \ar[r]^-{\x I} & C_{*}(\La\x I)\ar[rr]^{[\tau_{\GH}\cap]} && C_{*}(U) \ar[r]^-{R} & C_{*}(\F) \ar[r]^-{\cut} & C_{*}(\La\x\La) \ar@<-1ex>[r]_-{\AW} & \ar@<-1ex>[l]  \oplus C_{p}(\La) \ot C_{q}(\La).}$$
  Taking the chains relative to $M$, $\B$ or $M\x \La\cup \La\x M$ as appropriate, we consider the induced diagram in homology.  The first square commutes  up to the sign $(-1)$ by the above computation of $\chi$ on $H_*(\La\x I,\B)$. The second square commutes by naturality of the cap product. The third square commutes because $R\circ \chi\simeq \chi\circ R$ relative to $\B$:
  we have
\begin{align*}
  \chi\circ R(\ga,s)& = (\overline{\ga_0\ga(s)}\st \ga[s,1] *_{1-s}\ga[0,s]\st \overline{\ga(s)\ga_0}\ , \ 1-s) \\
  R \circ \chi(\ga,s)& = (\ga[s,1]\st \overline{\ga_0\ga(s)} *_{1-s} \overline{\ga(s)\ga_0}\st \ga[0,s]\ , \ 1-s) 
\end{align*}
with the first loop based at $\ga_0$ and the second based at $\ga(s)$. A homotopy can be obtained by sliding the basepoint along the geodesic $\overline{\ga_0\ga(s)}$.  
 The fourth square commutes on the space level as noted above. The map $t_\ot$ takes $A\ot B$ to $(-1)^{pq}B\ot A$ and  the last square commutes in homology. Now  $\chi^*\tau_{\GH}=\chi^*e_I^*\tau_M=e_I^*t^*\tau_M=(-1)^n\tau_{\GH}$ as $t^*$ changes $\tau_M$ by the sign $(-1)^n$.

So, for $A\in C_k(\La,M)$, the diagram gives us that  $$\vee_{\Thom} A=\sum_{p+q=k+1-n}A^0_{p}\ot A^1_{q}\simeq (-1)^{1+n}\sum_{p+q=k+1-n}(-1)^{pq}A^1_{q}\ot A^0_{p}.$$
The same diagram in cohomology gives for $[x]\in H^p(\La,M), [y]\in H^q(\La,M)$
\begin{align*}
  [x]\!\oast\![y]=(-1)^{n+np}\vee_{\Thom}^* ([x]\!\ot\![y])  =(-1)^{1+pq+np} \vee_{\Thom}^* ([y]\!\ot\! [x])  &=(-1)^{1+pq+np+n+nq} \vee_{\Thom}^* ([y]\!\ot\! [x]) \\
  & =(-1)^{(p+n)(q+n)+1}[y]\!\oast\! [x]
  \end{align*}
as stated in the theorem (after regrading). 
  
\medskip

The associativity relation corresponds to the two ways of successively splitting loops at two consecutive intersection points, as parametrized by the 2-simplex
$$\De^2=\{0\le s_1\le s_2\le 1\}.$$
We will use the two evaluation maps
$$e_1,e_2\colon \La\x \De^2 \ \rar\ M\x M$$
defined by $e_i(\ga,s_1,s_2)=(\ga(0),\ga(s_i))$, and the corresponding subspaces  
\begin{align*}
  U_{1}&=\{(\ga,s_1,s_2)\in \La\x \De^2 \ |\ |\ga(0)-\ga(s_1)|<\eps\}= e_1^{-1}(U_M)\\
  U_{2}&=\{(\ga,s_1,s_2)\in \La\x \De^2 \ |\ |\ga(0)-\ga(s_2)|<\eps\}=e_2^{-1}(U_M)\\
   U_{12}&=\{(\ga,s_1,s_2)\in \La\x \De^2 \ |\ |\ga(0)-\ga(s_1)|<\eps\ \textrm{and}\ |\ga(0)-\ga(s_2)|<\eps\}=U_1\cap U_2.\\
\end{align*}

For associativity, consider the following diagram, where we have marked in bold the squares where signs come in, and where we use the following abbreviated notations:
we write $\tau=\tau_{\GH}=e_I^*\tau_M$,  the map $cR=\cut\circ R\colon U\to \La^2$, 
with its twisted version $$cR^t=(1\x t)\circ (cR\x 1)\circ (1\x t)\colon U_{2}\to \La\x I \x \La$$ that takes $(\ga,s_1,s_2)$ to $(\ga'[0,s_2],s_1,\ga'[s_2,1])$, restricting to a map $cR^t\colon U_{12}\to U \x \La$. 
$$%
  \xymatrix @C=1pc {    C_*(\La) \ar[dd]_{\x I} \ar[r]^-{\x I} \ar@{}[ddr]|{\mbox{{\bf (0)}}}& C_*(\La\x I) \ar[d]^-{\x I} \ar[r]^-{\tau\cap }\ar@{}[dr]|{\mbox{(1)}} &  C_*(U) \ar[d]^{\x I} \ar[r]^-{cR} & C_*(\La^2) \ar@/^8ex/[dd] \ar[d]^{\x I} \ar[r]  \ar@{}[ddr]|{\mbox{ \ \ {\bf (10)}}}& C_*(\La)\ot C_{*}(\La) \ar@<1ex>[l] \ar[dd]^{(1)^r(\x I_1\ot 1)} \\
  & C_*(\La\x I^2) \ar@{}[dr]|{\mbox{(2)}} \ar[d]^{p\circ (1\x t)}\ar[r]^-{(\tau\x 1)\cap } &  C_*(U\x I) \ar[d]^{p\circ(1\x t)} \ar[r]^-{cR\x 1}  \ar@{}[dr]|{\mbox{(3)}}& C_*(\La^2\x I) \ar[d]^{1\x t}  & \\
    C_*(\La\x I) \ar[r]^-{\x_I \De^2}\ar[d]_{\tau\cap} \ar@{}[dr]|{\mbox{(4)}}& C_*(\La\x \De^2) \ar[d]_{e_1^*\tau_M\cap} \ar@{}[dr]|{\mbox{{\bf (5)}}}  
    \ar[r]^-{e_2^*\tau_M\cap } &  C_*(U_2) \ar@{}[dr]|{\mbox{\ \ \ \ \ (6)}}\ar[d]^{e_1^*\tau_M\cap} \ar[r]^-{cR^t}& C_*(\La\x I\x \La) \ar[r]\ar[d]^{(\tau\x 1)\cap} \ar@{}[dr]|{\mbox{(11)}} &  \ar@<1ex>[l] C_*(\La\x I)\ot C_{*}(\La) \ar[d]^{\tau\cap\ot 1}\\
C_*(U) \ar[d]_{cR} \ar[r]^-{\x_I\De^2}& C_*(U_1)  \ar@{}[dr]|{\mbox{(7)}}\ar[r]^(.5){e_2^*\tau_M\cap} \ar[d]^{cR\x 1} & C_*(U_{12}) \ar[d]_{cR\x 1} \ar[r]^-{cR^t} \ar@{}[dr]|{\mbox{(8)}} & C_*(U\x \La) \ar[r] \ar[d]^{cR\x 1}& \ar@<-1ex>[l]  C_*(U)\ot C_{*}(\La) \ar[d]^{cR\ot 1} \\
C_*(\La^2)  \ar[r]^-{\x I} \ar[d]& C_*(\La^2\x I) \ar@{}[dr]|{\mbox{{\bf (9)}}}\ \ar[r]_-{(1\x \tau_{})\cap}\ar[d]& C_*(\La\x U)  \ar[r]_{1\x cR}\ar[d]& C_*(\La^3)   \ar[d] \ar[r]&  C_*(\La^2) \ot C_{*}(\La) \ar[d]\\
C_{*}(\La)^{\ot 2}  \ar[r]_-{1\ot \x I} \ar@<-1ex>[u]& C_{*}(\La)\ot C_*(\La\x I)  \ar[r]_-{1\ot \tau_{}\cap} \ar@<1ex>[u]& C_{*}(\La)\ot C_*(U)  \ar[r]_{1\ot cR}\ar@<1ex>[u]& C_{*}(\La)\ot C_*(\La^2)  \ar[r]_-{} & C_{*}(\La)^{\ot 3}
}$$ %\newline
Considering the diagram starts in degree $p+q+r-2+n$ and ends in total degree $p+q+r$, where we will write $C_*(\La)^{\ot 3}=\bigoplus_{p,q,r} C_p(\La)\ot C_q(\La)\ot C_r(\La)$.
\begin{enumerate}[(a)]
\item Diagram (0) commutes up to the sign $(-1)$ as both compositions cross with $\De^2$, but with opposite orientations.
  
\item Diagrams (1), (4) and (11) commute by (\ref{equ:capcross}), using that $e_1^*\tau_M=\tau\x 1$ for (4),  while diagram (9) commutes up to the sign $(-1)^{np}$ by the same equation if we write the target
$$C_*(\La)\ot C_*(U)=\bigoplus_{*=p+q+r}C_p(\La)\ot C_{q+r}(U).$$

\item Diagrams (2),(6), (7) commute by naturality of the cap product. For diagram (2) this follows from the fact that $(1\x t)^*e_2^*\tau_M=e_1^*\tau_M=\tau\x 1$,
  for diagram (6) because $cR^t\circ (e_I\x 1)=e_1$, and diagram (7) because $(cR\x 1)^*(1\x \tau)=e_2^*\tau_M$.

\item Diagrams (3) and (8) commute on the space level by definition of the map $cR^t$. 
  
\item Diagram (5) commutes up to $(-1)^n$ because of the compatibility with cup and cap and the fact that we commute two degree $n$ classes.

\item Diagram (10) commutes 
  if we write the target as $$C_*(\La\x I)\ot C_*(\La)=\bigoplus_{*=p+q+r-1+n}C_{p+q-1+n}(\La\x I)\ot C_r(\La),$$
 with the sign $(-1)^r$ already added to the rightmost vertical map.  
Indeed, 
starting from the top right corner, we see that a tensor product $A\ot B$ is mapped to the class $A\x I\x B$ following either sides of the diagram, with the orientation differs by that sign, as the interval is crossed with either $A$ or $A\x B$. 
\end{enumerate}

Reading the outside of the diagram,  we thus get that
$$\xymatrix{C_*(\La) \ar[rr]^-{\vee_{\Thom}} \ar[d]_{\vee_{\Thom}} && \bigoplus  C_{p+q-1+n}(\La)\ot C_{r}(\La) \ar[d]^-{(-1)^{r}\vee_{\Thom}\ot 1}\\
\bigoplus C_{p}(\La)\ot C_{q+r-1+n}(\La) \ar[rr]^{1\ot \vee_{\Thom}}  &&  \bigoplus C_{p}(\La)\ot C_{q}(\La) \ot C_{r}(\La)
}$$
commutes up to the sign  $(-1)^{1+n+np}$.
If we replace $\vee_{\Thom}$ by $\vee$ in the diagram, and $(-1)^{r}$ by $(-1)^{r+n}$ to take into account the degree shift, we get a diagram commuting up to the sign
 $$(-1)^{1+np}(-1)^{(n+np)+(n+nq)}(-1)^{(n+n(p+q-1+n))+(n+np)}=-1$$
 where the first sign comes from the sign commutativity of the corresponding square for $\vee_{\Thom}$, taking into account to the sign $(-1)^{r+n}$ placed on the right vertical arrow, the second from the deviation of using $\vee$ instead of $\vee_{\Thom}$ along the bottom of the diagram and the third for that deviation going along the bottom of the diagram. 
In cohomology, this gives the sign stated in the theorem. 
\end{proof}

\begin{rem}\label{rem:GHsign}
 The algebraic cohomology product of the paper \cite{GorHin}
  $$\circledast_{\GH}\colon H^{p}(\Lambda ,M)\otimes H^{q}(\Lambda ,M)\rightarrow H^{p+q+n-1}(\Lambda ,M)$$
is defined by 
\begin{equation*}
[x]\circledast_{\GH} [y]=(-1)^{(n-1)q}[x]\circledast_{\Thom} [y]
\end{equation*}
(see \cite[9.1]{GorHin} and Theorem~\ref{thm:cocoequ}). 
Inputing this sign change in the above computation, gives that this product satisfies the  graded commutativity and associativity relations 
$$[x]\circledast_{\GH} [y]=(-1)^{(p-n+1)(q-n+1)}[y]\circledast_{\GH} [x]  \ \ \ \textrm{and}\ \ \ ([x]\circledast_{\GH} [y])\circledast_{\GH} [z]=(-1)^{n(p+r)}[x]\circledast_{\GH}([y]\circledast_{\GH} [z])$$
if $[x]\in H^p(\La,M)$, $[y]\in H^q(\La,M)$, and $[y]\in H^r(\La,M)$. 
\end{rem}

\section{Computing the product and coproduct by intersecting chains}\label{sec:computing}

The original idea of Chas and Sullivan was that, given two nice enough cycles $A,B\in C_*(\La)$, we can define a product $A\wedge B$ by taking the concatenation of the loops in $A$ and $B$ on the locus where their basepoints agree, i.e. after taking the intersection product of $e_*(A)$ and $e_*(B)$. In this section, we make precise that this is indeed what the product $\wedge$ does on classes that evaluate at their basepoint to transverse classes in $M$, and we give the corresponding property for the coproduct $\vee$.
For the coproduct, this property is a little more delicate to state and prove because all cycles admit trivial self-intersections, so we will first need to make precise how the trivial self-intersections do not count.

\subsection{Computing the homology product}

Recall that two smooth maps $f\colon X\to M$ and $g\colon Y\to M$ are transverse if for each $x,y$ with $f(x)=p=g(y)$, we have that $f_*T_xX+g_*T_yY=T_pM$. This is equivalent to the transversality of the maps $f\x g\colon X\x Y\to M\x M$ and $\De\colon M\to M\x M$. Note also that if $f$ and $g$ are embeddings, then $(f\x g)^{-1}(\De M)\cong f(X)\cap g(Y)$. 
%% a tangent vector $(p+V,p+W)$ in $M\x M$ can be written as $(p+V_X+V_Y,p+W_X+W_Y)=(p+V_Y+W_X,p+V_Y+W_X)+(p+V_X-WX,p+V_Y-W_Y)$. 

The following result is a slight generalization of  \cite[Prop 5.5]{GorHin} where the cycles were assumed to be embedded submanifolds of the loop space, and the sign was not computed: 
\begin{prop}\label{prop:transCS}
  Let $Z_1\colon \Si_1\to \La$ and $Z_2\colon \Si_2\to \La$ be cycles respresented by closed, oriented manifolds $\Si_1$ and $\Si_2$, with the property that $e\circ Z_1\colon \Si_1\to M$ and $e\circ Z_2\colon \Si_2\to M$ are transverse smooth maps. 
  Then 
  $$[Z_1]\wedge_{\Thom} [Z_2]=(Z_1\st Z_2)|_{\Si_1\x_e \Si_2} \ \in \ H_*(\La)$$
  is represented by the cycle $Z_1\wedge_{\Thom} Z_2\colon  \Si_1\x_e\Si_2\to \La$ taking $(x,y)$ to the concatenation $Z_1(x)\st Z_2(y)$, where $\Si_1\x_e \Si_2:= (e\!\circ\! Z_1\x e\!\circ\! Z_2)^{-1}(\De M)\subset \Si_1\x \Si_2$ is oriented so that the isomorphism 
  $$T(\Si_1\x \Si_2)|_{\Si_1\x_e \Si_2}\cong (e\!\circ\! Z_1\x e\!\circ\! Z_2)^*N\De(M) \op T(\Si_1\x_e \Si_2)$$ respects the orientation, with $N\De M$ oriented as in Remark~\ref{rem:orientations}. 
\end{prop}

\begin{rem}\label{rem:transCS}
Because the product only cares about the parts of the cycles mapped to the neighborhood $U_{\CS}$ of the figure 8 space $\La\x_M\La$, the assumption can be weakened, without any further work, to requiring only that the restriction of  $e\circ Z_1\x e\circ Z_{2}$ to $(Z_1\x Z_2)^{-1}(U_{\CS})= (e\circ Z_1\x e\circ Z_{2})^{-1}(U_{M})\subset (\Si_1\x \Si_2) $ is transverse to the diagonal in $M\x M$.  
\end{rem}

  \begin{ex}[The unit for $\wedge$] \label{ex:unit}
    Suppose that $A\colon \Si_A\to \La$ is a $p$-cycle  with the property that $e\circ A\colon \Si_A\to M$ is an embedded submanifold, and let $[M]\colon M\inc \La$ denote the cycle of all constant loops. Then $e\circ A$ and  $e\circ [M]$ are trivially transverse in $M$, intersecting in $\Si_A$,  and the proposition verifies that $[M]\wedge_{\Thom}[A]=[A]=(-1)^{n-np}[A]\wedge_{\Thom}[M]$,
    as the isomorphism $T(M\x \Si_A)\cong (1\x e\circ Z_A)^*N\De(M) \op T\Si_A$ is orientation preserving with our choice of orientation for $N\De M$ of Remark~\ref{rem:orientations}, while
     the isomorphism $T(\Si_A\x M)\cong (e\circ Z_A\x 1)^*N\De(M) \op T\Si_A$ only holds up to the sign $(-1)^{n-np}$. Note that the signs work out in such a way that $[M]\wedge [A]=[A]=[A]\wedge [M]$, as expected. 
    \end{ex}

  \begin{proof}[Proof of Proposition~\ref{prop:transCS}]
    Consider the following diagram:
    $$\xymatrix{& H_{p+q}(\Si_1\x \Si_2)\ar[r]^-{Z_1\x Z_2}\ar[d]_{[(Z_1\x Z_2)^*\tau_{\CS}\cap]} & H_{p+q}(\La\x \La,U_{\CS}^c)\ar[d]^{[\tau_{\CS}\cap]} \\
      H_{p+q-n}(\Si_1\x_e\Si_2) \ar[r]& H_{p+q-n}((Z_1\x Z_2)^{-1}(U_{\CS})) \ar[r]^-{Z_1\x Z_2} & H_{p+q-n}(U_{\CS})\ar[d]^{\concat\, \circ\, R_{\CS}} \\
      & & H_{p+q-n}(\La)}$$
    The square commutes by naturality of the cap product. Now $(Z_1\x Z_2)^*\tau_{\CS}=((Z_1\circ e)\x (Z_2\circ e))^*\tau_{M}$ and by the transversality assumption, this is a Thom class for the embedding $\Si_1\x_e \Si_2\inc \Si_1\x \Si_2$. Hence the left vertical map takes $[\Si_1\x \Si_2]$ to $\Si_1\x_e \Si_2$, with the given orientation by our conventions. The result follows, noting also that $R_{\CS}$ is the identity on pairs of loops that already intersect at their basepoints. 
   \end{proof}

   Let $\Omega M=\maps_*(S^1,M)\xrightarrow{\iota} \La$ denote the based loop space of $M$. 
The following, probably well-known fact, follows from Proposition~\ref{prop:transCS}, using also Remark~\ref{rem:transCS}. 

\begin{cor}
If $\dim(M)\neq 0$,   the Chas-Sullivan product is trivial on the homology of the based loop space $\iota_*H_*(\Omega M)\le H_*(\La)$. 
\end{cor}

\begin{proof}
  Let $Z_1\colon \Si_1\to \Omega M\to \La$ and $Z_2\colon \Si_2\to \Omega M\to \La$ be cycles in the image of the based loops in $\La$. Let $p$ be the basepoint and $q\neq p$ some other point in $M$, in the same path component.
          %           with $|p-q|<\rho$.
  Replace $Z_2$ by the homologous cycle $Z_2'\colon \Si_2\to \La$ obtained from $Z_2$ by setting $Z_2'(x)=\delta*_{\frac{1}{3}} Z_2(x) *_{\frac{2}{3}} \overline{\delta}$ for $\delta$ a path from $q$ to $p$.
Take $\eps<|p-q|$. Then  $(Z_1\x Z_2')^{-1}(U_{\CS})= (e\circ Z_1\x e\circ Z_2')^{-1}(U_{M})=\emptyset$ as $e\circ Z_1(\Si_1)=\{p\}$ and $e\circ Z_2'(\Si_2)=\{q\}$ with $(p,q)\notin U_M$ by construction. It follows from Proposition~\ref{prop:transCS} and  Remark~\ref{rem:transCS} that $[Z_1]\wedge [Z_2]=[Z_1]\wedge [Z_2']=0$. 
  \end{proof}

\subsection{Computing the homology coproduct}
   
The coproduct is a relative operation, and to prove the analogous result in that case, we will first show that the coproduct can equivalently be defined as relative to a slightly larger subspace than the one used in the original definition. As above, we write $\B:=\La\x \del I\cup M\x I$.
We consider the following two variants of $\B$:  %open  neighborhood of $B$:
\begin{align*}
   \Bb &:=\{(\ga,s)\in \La\x I \ |\ \ga(t)=\ga(0) \ \ \forall t\in [0,s] \ \textrm{or}\ \forall t\in [s,1]\}\\
  \B_\eps &:=\{(\ga,s)\in \La\x I \ |\ |\ga(t)-\ga(0)|<\eps \ \ \forall t\in [0,s] \ \textrm{or}\ \forall t\in [s,1]\}
\end{align*}
Note that there are inclusions 
$$\B\inc \Bb \inc \B_\eps.$$
The space $\Bb$ identifies with the inverse image of $M\x \La\cup \La\x M$ under the cut map, and $\B_\eps$ is an open neighborhood of $\Bb$: 
$$\xymatrix{&\F\ar[rr]^-{\cut} & & \La\x \La \\
&    \B_\eps \ar@{^(->}[u] \ar[rr]^-{\cut} && \La_\eps\x \La \cup \La\x \La_\eps \ar@{^(->}[u]\\
\B\ \ar@{^(->}[r]  & \Bb \ar@{^(->}[u] \ar[rr]^-{\cut} && M\x \La \cup \La\x M. \ar@{^(->}[u]
 }$$
 where $\La_\eps=\{\ga\in \La \ |\ |\ga(t)-\ga(0)|<\eps \ \textrm{for all}\ t\in [0,1]\}$ is an open neighborhood of the constant loops in $\La$.

It follows from Lemma~\ref{lem:GHhtpy} that the inclusion  $(\F,\B)\inc (U_{\GH},\B)$ is a relative homotopy equivalence, with homotopy inverse given by the retraction maps. We show now that we can replace $\B$ by $\Bb$ and $\B_\eps$ in the following way
\begin{lem}\label{lem:hompairs}
Let $\eps\le \eps_1<\frac{\rho}{3}$.  The inclusion   $i\colon (\F,\Bb)\inc (U_{\GH,\eps},\B_{\eps_1}\cap U_{\GH,\eps})$ is a homotopy equivalence of pairs. 
\end{lem}

\begin{proof}
For $\mu\in [0,1]$, let  $K=K_\mu\colon M\times M\rightarrow M$  be a smooth map that satisfies 
$$  
K_{p}(q)= K_{\mu,p}(q)= \left\{ \begin{array}{ll} 
                     p  & \text{ if }|p-q|<\eps_1 \mu  \\
                     q  & \text{ if }|p-q|\geq 2\eps_1 \mu 
                   \end{array}\right.  \ \ \ \textrm{and}\ \ \   |K_p(q)-p| \le |p-q|  $$
 that is $K_p$ is a map that does not increase the distance to $p$, collapses the ball of radius
 $\eps_1\mu$ around $p$ to $p$ and fixes anything away
 from the ball of radius $2\eps_1\mu$ around $p$. As $\eps_1$ is small with respect to the injectivity radius, such a map exists because it exists in $\RR^n$.
Note that when $\mu=0$, $K_p=K_{0,p}\colon M\to M$ is the identity. 

Now define $Q_\mu\colon U_{\GH,\eps}\rightarrow U_{\GH,\eps}$  
by setting 
\[
Q_\mu(\gamma,s)=(K_{\mu,\gamma (0)}\circ \gamma,s). 
\]
This is well-defined as $|\gamma(s)-\gamma(0)|<\eps$ implies that the same holds for $K_{\mu,\gamma (0)}\circ \gamma$ by assumption on $K_{\mu,\ga(0)}$. For the same reason, it restricts to a map  $Q_\mu\colon \B_{\eps_1}\to \B_{\eps_1}$. It also restricts to maps  
$Q_\mu\colon \F\rightarrow \F$ and $Q_\mu\colon \Bb\to \Bb$ because $\gamma(t)=\gamma(0)$ gives that the same holds for $K_{\mu,\gamma (0)}\circ \gamma$. 
Also $Q_0=\id$ while 
$Q_1$ retracts every part of $\gamma $ at distance at most $\eps_1$  of $\ga(0)$ to $\ga(0)$ itself. In particular, 
$$Q_1\colon (U_{\GH,\eps},\B_{\eps_1})\rar (\F,\Bb)$$
Varying the parameter $\mu\in [0,1]$ gives homotopies $i\circ Q_1\simeq \id$ and  $Q_1\circ i\simeq \id$,  considering $Q_\mu$ as a 
self-map of $U_{\GH,\eps}$ in the first case, and of $\F$ in the second, noting that it restricts as required to $\B_{\eps_1}$ and $\Bb$. 
\end{proof}

If follows from the lemma that $\B_{\eps_1}\cap U_{\GH,\eps}\simeq \Bb$, and in particular that $\B_\eps\simeq \Bb$. 
It not clear that these two spaces should be homotopy equivalent to $\B$.

\begin{prop}\label{prop:copeps}
Let $\eps_0<\eps\le \eps_1<\frac{\rho}{3}$. The coproduct $\vee_{\Thom}$ can be defined using the sequence of maps 
\begin{multline*}H_*(\La,M) \xrightarrow{\x I}  H_{*+1}(\La \x I, \B_{\eps_1}\cup U_{\GH,\eps_0}^c) \xrightarrow{\tau_{\GH} \cap}  H_{*+1-n}(U_{\GH,\eps},\B_{\eps_1}\cap U_{\GH,\eps})   \\
 \xrightarrow{Q_1} H_{*+1-n}(\F,\Bb) \xrightarrow{\cut} H_{*+1-n}(\La\x \La,M\x \La\cup \La\x M)
\end{multline*}
for $\Bb,\B_{\eps_1}$ as defined above and $Q_1\colon U_{\GH}\to \F$ as in the proof of Lemma~\ref{lem:hompairs} a homotopy inverse for the inclusion. 
\end{prop} 

\begin{proof}
The statement follows from the commutativity of the following diagram
$$\xymatrix{H_*(\La,M) \ar[r]^-{\x I} & 
H_{*+1}(\La \x I, \B\cup U_{\GH,\eps_0}^c) \ar[d]^-{\tau_{\GH} \cap} \ar[r] &H_{*+1}(\La \x I, \B_{\eps_1}\cup U_{\GH,\eps_0}^c) \ar@<2ex>[d]^-{\tau_{\GH} \cap}  \\
& H_{*+1-n}(U_{\GH,\eps},\B) \ar[r]  \ar[d]_R^\cong & H_{*+1-n}(U_{\GH,\eps},\B_{\eps_1}\cap U_{\GH,\eps})   \ar@<2ex>[d]^{Q_1}_\cong \\
& H_{*+1-n}(\F,\B)  \ar@<-3ex>[u] \ar[dr]_-{\cut} \ar[r] & H_{*+1-n}(\F,\Bb)  \ar@<2ex>[d]^{\cut}  \ar@<1ex>[u] \\
 & &H_{*+1-n}(\La\x \La,\B),}$$
where $Q_1$ is defined in the proof of Lemma~\ref{lem:hompairs} above. 
The first square commutes by naturality of the cap product, the second because it commutes on the level of spaces for the inclusions, that are homotopy inverses for $R$ and $Q_1$ by Lemma~\ref{lem:GHhtpy}  and Lemma~\ref{lem:hompairs}, and the triangle commutes on the space level. 
\end{proof}

We will use the above description of the coproduct to give a geometric way to compute it on ``nice'' cycles, where nice here will mean that they are modeled by manifolds that, after applying the evaluation map and removing the trivial self-intersections, are transverse to the diagonal in $M\x M$.

\begin{prop}\label{prop:strong4}  %Let $\eps\le \eps_1<\frac{\rho}{3}$ and
  Let $Z\colon (\Sigma,\Si_0) \rightarrow  (\Lambda,M)$ be a relative $k-$cycle represented by  an oriented manifold pair $(\Si,\Si_0)$, and write $\Si_\B:=\Si_0\x I \cup \Si\x \del I$. 
Suppose that the restriction 
\begin{align*}
E(Z):=e_I\circ (Z\times I)|_{(\Si\x I)\minus \Si_\B } \colon (\Si\x I)\minus \Si_\B & \ \longrightarrow\ M\times M\\
(\sigma ,t) &\ \ \mapsto\ \; \big(Z(\sigma )(0),Z(\sigma)(t)\big)
\end{align*}
 is a smooth map transverse to the diagonal $\Delta\colon M\to M\x M$. 
 Then 
\begin{equation*}
[\vee Z]=[\cut\circ (Z\times I)|_{\overline{\Si_{\De}}}]\in
H_{k+1-n}(\Lambda\times \Lambda ,M\times \Lambda \cup \Lambda\times M)
\end{equation*}
for $\overline{\Si_\De}$ the closure inside $\Si\x I$ of
$$\Si_\Delta:=E(Z)^{-1}(\De M)\ \subset \ (\Si\x I)\minus \Si_\B$$ 
oriented so that the isomorphism $T_{(\s,t)}(\Si\x I)\cong N_{E(Z)(\s,t)}\De(M)\oplus T_{(\s,t)}\Si_\De$ respects the orientation. 
\end{prop}

\begin{rem}
(1) Note that the map $\overline{E}(Z)=e_I\circ (Z\times I) \colon \Si\x I \longrightarrow M\times M$ 
will essentially never be transverse to the diagonal as $\Si_\B$ is by definition mapped to  $\B=M\x I\cup \La \x \del I$ by $Z\x I$, and hence entirely to the diagonal of $M$ by $\overline{E}(Z)$.
Likewise,  $\overline{E}(Z)^{-1}(\De M)$ %and  $E(Z)^{-1}(U_M)$
will typically not be a submanifold of $\Si\x I$.

\smallskip

\noindent
(2) Just as for the product, one can without any further work weaken the assumption to only require that  $(Z\x I)^{-1}(U_{\GH,\eps}) \minus \Si_{\eps_1}=K \minus \Si_{\eps_1}$ for $K\subset \Si\x I$  a manifold, neighborhood retract, intersecting transversally $\De M$ under the map $e_I\circ (Z\x I)$, where $\eps\le \eps_1<\frac{\rho}{3}$, that is we can throw away anything that is not in the inverse image of $U_{\GH}\minus \B_{\eps_1}$.
\end{rem}

\begin{proof}[Proof of Proposition~\ref{prop:strong4}] 
We compute the coproduct using the composition given by Proposition~\ref{prop:copeps}. Write $\Si_{\B_\eps}=(Z\x I)^{-1}(\B_\eps)$. 
Let $\eps_0<\eps$ and consider the diagram \\
$\xymatrix{%&H_{k+1}(\Si\x I,\Si_0\x I\cup \Si\x \del I) \ar[r]^-{Z\x I} \ar[d] & H_{k+1}(\La \x I,\B\cup U_{\GH,\eps_0}^c) \ar[d]\\
&  H_{k+1}(\Si\x I,\Si_{\B_\eps}) %\ar[r] &H_{k+1}(\Si\x I, \Si_{\B_\eps} \cup \Si_{U_{\eps_0}}^c)
  \ar[r]^-{Z\x I} & H_{k+1}(\La\x I,\B_\eps \cup U_{\GH,\eps_0}^c) \ar@<10ex>@/^10ex/[ddd]_-{[\tau_{\GH}\cap]} \\
&  H_{k+1}(\Si\x I\minus \Si_\B,\Si_{\B_\eps}\minus \Si_\B) \ar[u]^\cong %&H_{k+1}(\Si_{U_\eps}\minus \Si_\B, (\Si_{\B_\eps}\minus \Si_\B)\cup \Si_{U_{\eps_0}}^c)
  \ar[r]^-{Z\x I} \ar[u]^\cong \ar[d]_-{(Z\x I)^*\tau_{\GH}\cap}
 &   H_{k+1}(\La\x I\minus \B,(\!\B_\eps\minus \B\!) \cup U_{\GH,\eps_0}^c) \ar[u]_\cong \ar[d]_-{[\tau_{\GH}\cap]} \\
H_{k+1-n}(\Si_\De,\Si_\De\cap \Si_{\B_\eps}) \ar[d]_\cong \ar[r] & H_{k+1-n}(\Si_{U_\eps}\minus \Si_\B,\Si_{\B_\eps}\minus \Si_\B )\ar[d]_\cong \ar[r]^-{Z\x I} & H_{k+1-n}(U_{\GH}\minus \B,\B_\eps\minus \B)  \ar[d]^\cong\\
H_{k+1-n}(\overline{\Si_\De},\overline{\Si_\De}\cap \Si_{\B_\eps}) \ar[r] & H_{k+1-n}(\Si_{U_\eps},\Si_{\B_\eps} )\ar[r]^-{Z\x I} & H_{k+1-n}(U_{\GH},\B_\eps) \ar@<2ex>[d]^{Q_1}_\cong \\
H_{k+1-n}(\overline{\Si_\De},\del \overline{\Si_\De}) \ar[rr]\ar[u] & & H_{k+1-n}(\F,\B)\ar@<1ex>[u]
}$
where we apply excision for the triples $(\B\subset \B_\eps\subset U_{\GH}=U_{\GH,\eps})$ and $(\B\cup U_{\GH}^c\subset \B_\eps\cup U_{\GH,\eps_0}^c\subset \La\x I)$ and their inverse images under $Z\x I$. 
Now $(Z\x I)^*\tau_{\GH}=(Z\x I)^*e_I^*\tau_{M}$ which, by our transversality assumption on $Z\x I$, is a Thom class from the embedding $\Si_\De\subset (\Si\x I)\minus \Si_\B$. 
Hence $(Z\x I)^*e_I^*\tau_{M}\cap [\Si\x I\minus \Si_\B]=[\Si_\De]$, oriented as in the statement by our conventions.
The result then follows from the commutativity of the diagram. 
\end{proof}

We will apply the proposition to do several computations below, see in particular Examples~\ref{ex:genus3} and \ref{ex:square}, and Proposition~\ref{prop:spheres1} below. We start the next section by a generalization of the simplest case of the proposition, namely the case where $\Si_\De$ is empty.

\subsection{Support of the homology product and cohomology product} 
\label{sec:supportNEW}

We will show in this section that the coproduct $\vee$ vanishes on the simple loops, and more generally that the
iterated coproduct $$\vee^{k}=(1\x\dots\x 1\x \vee)\circ\dots\circ (1\x\vee)\circ \vee\colon  H_*(\La,M)\to H_*(\La^{k+1},\cup_i \La^i\x M\x \La^{k-i})$$
vanishes on loops having at most $k$-fold self-intersections. As a consequence, we are guaranteed to find loops
with multiple self-intersections when a power of the coproduct is not $0$.
Examples will follow. We start by defining what we mean by multiple
self-intersections.

\begin{Def}
\label{def:kfold} We will say that a (non-constant) loop $\gamma \in \Lambda 
$ has a \textit{k-fold intersection at }$p\in M$ if $\gamma ^{-1}\{p\}$
consists of $k$ points.
\end{Def}

\begin{figure}[hb]
\includegraphics[width=10.4cm]{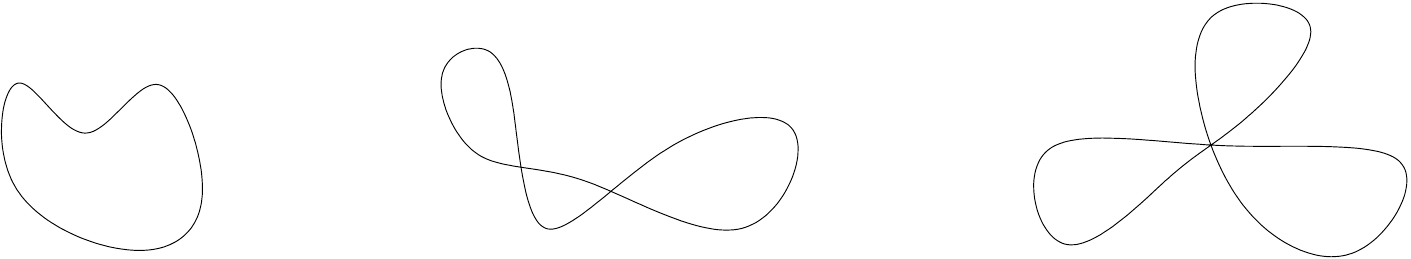}
\caption{Simple loop, loop with two 2-fold intersections and loop with a
3-fold intersection}
\end{figure}

Note that if $\gamma$ is parametrized proportional to arc length (see
Section~\ref{H1loopssec}) then $\gamma \in \Lambda $ has a $k$-fold
intersection at the basepoint if and only if it is the concatenation of $k$
nontrivial loops (and no more), so roughly speaking 
\begin{equation*}
k\text{-fold intersection }\leftrightsquigarrow \text{ \ }k\text{ loops.}
\end{equation*}
If a loop is stationary at some point $p$, it will have an infinite self-intersection at that point. Note though that for non-constant loops, such self-intersections can be removed by working instead with the homotopic subspace of loops parametrized proportional to arc length. 

\bigskip Recall the operator 
\begin{equation*}
\Delta \colon H_{\ast }(\Lambda )\longrightarrow H_{\ast +1}(\Lambda )
\end{equation*}%
coming from the $S^{1}$-action on $\Lambda $ that rotates the loops.

\begin{thm}
  \label{thm:coproduct2} 
  The coproduct $\vee$ and its iterates $\vee^k$ have the following support property:

\begin{enumerate}[(i)]

\item If $Z\in C_{\ast }(\Lambda,M)$ has the property that every nonconstant
loop in the image of $Z$ has at most $k$-fold intersections, then%
\begin{equation*}
\vee^{k}[Z]=\vee^{k}(\Delta[Z])=0\in H_*(\Lambda^k,\cup_i \La^i\x M\x \La^{k-i}).
\end{equation*}

\item If $Z\in C_{\ast }(\Lambda,M)$ is a cycle with the property that every
nonconstant loop in the image of $Z$ has at most a $k$-fold intersection at
the basepoint, then%
\begin{equation*}
\vee^{k}[Z]=0\in H_*(\Lambda^k,\cup_i \La^i\x M\x \La^{k-i}).
\end{equation*}
\end{enumerate}
\end{thm}

\begin{rem}
As an immediate consequence of (ii) we have the following (stronger)
statement:

\begin{enumerate}
\item[(ii')] Suppose $Z\in C_{\ast }(\Lambda,M)$ is a cycle with the property
that for \textit{some} $s\in S^{1}$, every nonconstant loop $\gamma $ in the
image of $Z$ has at most a $k$-fold intersection $\gamma (s)$. Then%
\begin{equation*}
  \vee^{k}[Z]=0 \in H_*(\Lambda^k,\cup_i \La^i\x M\x \La^{k-i}).
\end{equation*}
\end{enumerate}

On the chain level the coproduct $\vee$ sees only
self-intersections at the basepoint; $\vee\circ \Delta $ picks up
on other self-intersections. See Example~\ref{ex:genus3} below. 

The equations in Theorem~\ref{thm:coproduct2} are valid in $H_{\ast
}(\Lambda^k,\cup_i \La^i\x M\x \La^{k-i})$ and therefore, when applicable, after the
K\"unneth map in $H_{\ast }(\Lambda,M)^{\otimes k}$. But they are
valid for any choice of coefficients if we do not use the K{\"u}nneth formula.
\end{rem}

\begin{proof}[Proof of Theorem~\ref{thm:coproduct2}]
We start with the case $k=1$, that is assuming that loops in the image of $Z\colon (\Si,\Si_0)\to (\La,M)$ only have trivial self-intersections at their basepoint, or in other words
  $(Z\x I)^{-1}(\F)=(Z\x I)^{-1}(\B)=\Si_0\x I\cup \Si\x \del I$.
  The statement will follow from Proposition~\ref{prop:copeps} if we can find $\tilde\eps\le \eps\le  \eps_1<\frac{\rho}{3}$ such that $Z\x I^{-1}(U_{\GH,\tilde\eps})\subset \Si_{\B_{\eps_1}}:=Z\x I^{-1}(\B_{\eps_1})$.
  
We start by considering $\B_\eps$.   Because $\B_{\eps}$ is open, and $\Si\x I$ compact, the space $(Z\x I)^{-1}(\B_{\eps}^c)\subset \Si\x I$ is compact.  Let
  $$\eps'=\min\{|\ga(s)-\ga(0)|\ |\ (\s,s)\in (Z\x I)^{-1}(\B_{\eps}^c), Z(\s)=\ga\}.$$
  We have $\eps'>0$ by our assumption on $Z$: if $\ga(s)=\ga(0)$ with
  $(\ga,s)$ in the image of $\B_\eps^c$, we cannot have $s=0,1$ or $\gamma$ constant, so $\ga$ must have a non-trivial self-intersection at time $s$, contradicting the assumption that it is simple if not constant. 
  Let $\tilde\eps=\min(\eps,\eps')$ and $\eps_1=\eps$. Then $Z\x I^{-1}(U_{\GH,\tilde \eps})\subset \Si_{\B_{\eps_1}}$ by our choice of $\tilde\eps\le \eps_1$: if $|\ga(s)-\ga(0)|<\tilde \eps$, then $(Z\x I)(\ga,s)\notin \B_\eps^c=\B_{\eps_1}^c$.
  Using the definiton of $\vee$ in  Proposition~\ref{prop:copeps}, we get that $[\vee Z]=0$.  %with $K=\emptyset$, which shows that $\vee Z=0$.
  This proves (ii) in the case $k=1$.

  Statement $(i)$ in the case $k=1$ follows from statement $(ii)$ as, if $Z$
consists only of simple and trivial loops, then both $Z$ and $\Delta Z$
consists only of trivial loops and those with no self-intersection at the
basepoint. This completes the proof in the case $k=1$.

\medskip

For $k>1$, let
$$\B_\eps^k=\{(\ga,\underline s)\in \La\x \De^k\ |\ |\ga(t)-\ga(0)|<\eps \textrm{ for all } t\in [s_i,s_{i+1}] \textrm{ for some } 0\le i\le k\}$$
where $\underline s=(0=s_0\le s_1\le \dots\le s_k\le s_{k+1}=1)$. Just like in the case $k=1$, we have  $\B_\eps^k$ is open, and hence $(Z\x \De^k)^{-1}((\B_{\eps}^k)^c)\subset \Si\x \De^k$ is compact.
Generalizing the case $k=1$, let
$$\eps'=\min \big\{\max_{0\le i\le k}\{|\ga(s_i)-\ga(0)|\} \ |\ (\s,\underline s)\in (Z\x \De^k)^{-1}((\B_{\eps}^k)^c), Z(\s)=\ga\big\}.$$
We have $\eps'>0$ by our assumption on $Z$ since $\eps'=0$ would imply the existence of a non-constant loop $\ga$ in the image of $Z$ having an at least $k$-fold self-intersection at $0$.  
This $\eps'$ has the property that if $\ga=Z(\s)$ is in the image of $Z$ and $(Z\x \De^k)(\s,\U s)\notin \B_\eps^k$, then there exists an $1\le i\le k$ such that $|\ga(s_i)-\ga(0)|\ge \eps'\ge \tilde \eps$, for $\tilde\eps=\min(\eps,\eps')$ as above.

As in the proof of associativity of the coproduct (Theorem~\ref{thm:coGHalg}), the iterated coproduct $\vee^k$ can be computed by first crossing with $\De^k$, then capping with the class
$\tau_{1\dots k}:=e_k^*\tau_M\cup \dots\cup e^*_1\tau_M$, for $e_i\colon \La\x \De^k\to M\x M$ the evaluation at $(0,s_i)$, and derefter applying the retraction maps. The target of the cap product $[\tau_{1\dots k}\cap ]$ is the space
$$U_{1\dots k,\tilde \eps}=\{(\ga,\U s) \ | \ |\ga(s_i)-\ga(0)|<\tilde \eps \ \textrm{for all} \ 1\le i\le k\}.$$
Now if  $(\ga,\U s)$ is in the image of the cycle $[\tau_{1\dots k}\cap (Z\x \De^k)]$, then $\ga$ was in the image of $Z$ and $(\ga,\U s)$ satisfies that $|\ga(s_i)-\ga(0)|<\tilde \eps$ for all $i$. Hence $(\ga,\U s)$ has image in $\B^k_\eps$ by our choice of $\tilde\eps$, and computing the coproduct using $\B_\eps$ (as in Proposition~\ref{prop:copeps}) yields that the coproduct is trivial, proving (ii) in the general case.

Just as in the case $k=1$, statement (i) follows from (ii) as if $Z$ consists only of loops that are either constant or have at most $k$-fold intersections, then both $Z$ and $\De Z$ consists of  only of loops that are either constant or have at most $k$-fold intersections at the basepoint. 
  \end{proof}

\begin{ex}
  \label{ex:genus3}
  Here is an example that illustrates the consequences of
Theorem~\ref{thm:coproduct2}. Let $M$ be a surface of genus 3 and
let $\gamma$ be the loop pictured in Figure~\ref{fig:genus3}, starting
at the intersection point $p$, and let $X=\{\gamma \}\in C_{0}(\Lambda,M)$.
Then $\gamma $ has a 2-fold intersection at the basepoint. Despite this, $%
\vee [X]=\widehat{\vee }[X]=0$ for (at least) two reasons: (i) $\widehat{%
\vee }[X]$ has degree $-1$ and (ii) $X$ is homologous to $Y=\{\delta \}\in
C_{0}(\Lambda,M)$ where $\delta $ traces out the same path as $\gamma $, but
starts at the point $q$. Since $\delta$ has no self-intersection at the
basepoint, $\widehat{\vee }[X]=\widehat{\vee }[Y]=0$ by Theorem~\ref%
{thm:coproduct2}.

\begin{figure}[h]
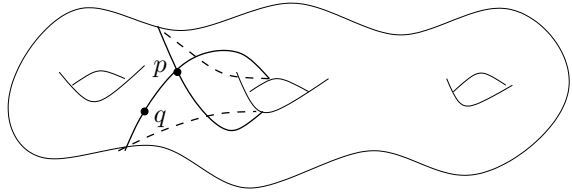

\centering
\begin{lpic}{genus3loop(0.55,0.55)}
\lbl[b]{37,27;$p$}
\lbl[b]{37,15;$q$}
\end{lpic}
\caption{Loop in a genus 3 surface}
\label{fig:genus3}
\end{figure}

Now consider the cycle $\Delta Y\in C_{1}(\Lambda,M)$. Suppose $p=\delta
(t_{0})=\delta (t_{1})$ with $0<t_{0}<t_{1}<1$. To compute $\widehat{\vee }%
\Delta Y$, we use Proposition~\ref{prop:strong4} with $\Si=S^1$ and $\Si_0=\emptyset$. Consider 
\begin{equation*}
\begin{array}{ccccl}
S^1\times I & \xrightarrow{\Delta Y\times I} & \Lambda \times I & 
\overset{e_I}{\longrightarrow} & M\times M \\ 
(t,s) & \mapsto & (\delta _{t},s) & \mapsto & (\delta _{t}(0),\delta
_{t}(s))=(\delta (t),\delta (t+s))%
\end{array}%
\end{equation*}
where $\delta_t$ is the loop defined by $\delta _{t}(s)=\delta (t+s)$. On
the \textit{interior} of $S^1\times I$, the image of this map intersects the
diagonal $\Delta \subset M\times M$ exactly twice, at the image of the
points $(t,s)=(t_{0},t_{1}-t_{0})$ and $(t,s)=(t_{1},t_{0}-t_{1}+1)$. 
One checks 
that $Z\x I(S^1\x I)$ is transverse to $\De M$ if $\delta $ self-intersects transversely at $p$, which we may assume. One can also check that the orientation of $T(S^1\x I)$ at the two points is opposite. (Note that this sign computation agrees with the computation of the Thom sign in the commutativity relation in the proof of Theorem~\ref{thm:coGHalg}, when in this case $n=2$ and $p=0=q$ so that $(-1)^{1+n+pq}=-1$.) 
Hence we can apply Proposition~\ref{prop:strong4} with $\Si_\De=\overline{\Si_\De}=\{(t_{0},t_{1}-t_{0}),(t_{1},t_{0}-t_{1}+1)\}\subset \Si=S^1\x I$. 
Let%
$$\begin{array}{rcccl}
\zeta &=&\delta [t_{0},t_{1}]&=&\ga[0,t_1-t_0] \\
\xi &=&\delta [t_{1},t_{0}+1]&=&\ga[t_1-t_0,1]
\end{array}$$
be the two loops that make up $\ga$. Then the proposition gives that  the coproduct is 
\begin{equation*}
\vee \Delta Y=\{\pm(\zeta ,\xi ),\mp(\xi ,\zeta )\}\in C_{0}(\Lambda \times
\Lambda,M\times \Lambda \cup \Lambda \times M),
\end{equation*}
where the actual sign depends on choices of
orientation of $\delta$ and of the surface. The homology class $[\vee \Delta Y]$ 
is nontrivial because $(\zeta,\xi)$ and $(\xi ,\zeta )$ belong to different
components of $\Lambda \times \Lambda$, both disconnected from $M\times \Lambda \cup \Lambda \times M$.

To sum up: we can perturb away the self intersection of $\gamma$ at the
basepoint, from which it follows that $\vee[X]=0$. But  $\vee [\Delta X]=\vee[\Delta Y]\neq 0$, and from this we can
conclude that every representative $Z$ of $[\Delta X]$ contains a
nonconstant loop with nontrivial self-intersection.

Note that in order to compute $[\vee \Delta Y]$, we only looked at intersections in the \textit{interior }of $S^1\times I$%
, and the fact that the boundary $S^1\x \del I$ maps to the diagonal played no role. 
This is exactly the main content of  Proposition~\ref{prop:strong4}. 
\end{ex}

\begin{ex}[Square of a loop in a surface]\label{ex:square}
Note that the same computation gives that, for $\ga$ a simple loop in a surface, 
$$\vee([\ga\st \ga])=0=\vee \Delta ([\ga\st \ga]).$$
Indeed, one can deform such a loop $\ga\st \ga$ so that it has exactly one transverse self-intersection as it lives in an orientable surface by assumption. The two resulting terms in the coproduct will cancel by the above computation (or by the sign of the commutativity relation of $\vee_{\Thom}$ as given in the proof of Theorem~\ref{thm:coGHalg}). 
\end{ex}

For $k=1$, Theorem~\ref{thm:coproduct2} says that $\vee[Z]=\vee\circ \Delta [Z]=0$ for any homology class $[Z]$ that can
be represented using simple and trivial loops only, and that $\vee[Z]=0$ for any $[Z]$ that has a representative consisting entirely of
trivial loops and loops that have no self-intersection at the basepoint
(that is, no self-intersections of order $k>1$ at the basepoint). This
support statement confirms our intuition that the coproduct

\begin{center}
``\emph{looks for self-intersections, and cuts them apart}".
\end{center}

From the point of view of geometry, if there are no self-intersections, the
coproduct should vanish, and the coproduct is not foolish enough
to mistake the tautology $\gamma (0)=\gamma (0)$  (the equation you get when
you set $\gamma (0)=\gamma (s)$ and let $s\rightarrow 0$) for a
self-intersection! For example, the union of ``all circles, great and small"
on a sphere, where a \textit{circle} on the unit sphere $S^{n}\subset 
\mathbb{R}^{n+1}$ is by definition the non-empty intersection of $S^{n}$
with a $2$-plane in $\mathbb{R}^{n+1}$, defines a non-trivial homology class
in $H_{3n-2}(\Lambda S^n)$. We can use our theorem to conclude that the
coproduct vanishes on this homology class, even though it includes the
constant loops.

\begin{rem}
  A $(k+1)$-fold self-intersection in a loop may seem as a rather unlikely event. However, it is not so unlikely in families of loops. In fact, such a $(k+1)$-fold self-intersection is generically avoidable precisely when the iterated coproduct $\vee^k$ anyway vanishes for degree reasons, i.e. in degrees $<k(n-1)$.
  Indeed, if $\Sigma \subset \Lambda$ is a cycle represented by a smooth submanifold,
then the map 
\begin{eqnarray*}
\Sigma \times \Delta^{k} &\rightarrow &M^{k+1} \\
(\gamma ,s_{1},...,s_{k}) &\rightarrow &(\gamma (0),\gamma
(s_{1}),...,\gamma (s_{k}))
\end{eqnarray*}%
will generically not intersect the diagonal $\Delta_{k+1}M\subset M^{k+1}$
(signalling that $(k+1)$-fold intersections at the basepoint can be avoided) precisely when 
\begin{equation*}
\dim (\Sigma \times \Delta^{k})+\dim M< \dim M^{k+1}, 
\end{equation*}%
that is when $\dim \Sigma < k(n-1)$. For $(k+1)$-self-intersection somewhere along the loops, this condition becomes $\dim \Sigma < k(n-1)-1$, which is also when the operation $\vee\De$ vanishes for degree reasons. 

For example in dimension $n=2$, with $k=1$ and dim $\Sigma =0$: if a loop
intersects itself transversely at the basepoint we can by perturbation
move the self-intersection away from the basepoint, as $0<k(n-1)$, but we cannot
remove the self-intersection entirely, given that $0\not < k(n-1)-1$. 
\end{rem}

Finally we give the dual statement to Theorem~\ref{thm:coproduct2}, where we write $[x_{1}]\circledast [x_{2}]{\circledast}\dots {\circledast }[x_{k}]$ for any bracketing of this product.

\begin{thm}
\label{thm:product2} The product $\circledast$ and its iterates 
$\oast^k$ have the following support property:

\begin{enumerate}[(i)]

\item If $[Z]\in H_{\ast }(\Lambda,M )$ has a representative with the property
that every nonconstant loop has at most $k$-fold intersections, and if $%
[x_{1}],\dots,[x_{k}]\in H^{\ast }(\Lambda,M )$, then 
\begin{equation*}
\langle [x_{1}]\circledast [x_{2}]{\circledast}\dots {\circledast }[x_{k}],Z\rangle=0\text{ and }\langle [x_{1}]\circledast[x_{2}]{\circledast } \dots {\circledast }%
[x_{k}],\Delta [Z]\rangle=0.
\end{equation*}

\item If $[Z]\in H_{\ast }(\Lambda,M)$ has a representative with the property
that every nonconstant loop has at most a $k$-fold intersection at the
basepoint, and if $[x_{1}],\dots,[x_{k}]\in H^{\ast }(\Lambda,M)$, then 
\begin{equation*}
\langle [x_{1}]{\circledast }[x_{2}]{\circledast } \dots {\circledast }[x_{k}],[Z]\rangle=0.
\end{equation*}
\end{enumerate}
\end{thm}

\begin{rem}
There is another, well-known "dual" statement to Theorem~\ref%
{thm:coproduct2}, which is concerned with the Chas-Sullivan product
instead, and is a consequence of Proposition~\ref{prop:transCS}: If $[X]$ and $[Y]$ are homology classes that have representatives
whose evaluations at $0$ are disjoint, then their product $[X]\wedge [Y]=0$.
To see the ``duality", consider the picture one must see to have a
nontrivial product or coproduct: if $\widehat{\vee }[Z]\neq 0$ or $[X]\wedge
[Y]\neq 0$, one must see a picture that looks like this:

\centering
\includegraphics[width=5cm]{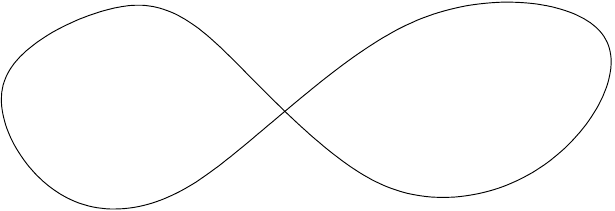}
\end{rem}

\begin{proof}[Proof of Theorem~\ref{thm:product2}]
  The statement is a reformulation of Theorem~\ref{thm:coproduct2} as we have the equality 
  $\langle\, [x_{1}]\circledast (\dots \oast (x_{k-2}{\circledast } (x_{k-1}\oast [x_{k}]))\dots ),Z \rangle=\langle [x_{1}]\ot [x_{2}]\ot\dots \ot [x_{k}],\vee^k Z\rangle$
  by definition of the cohomology product, and the product is associative up to sign (Theorem~\ref{thm:coGHalg}). 
\end{proof}

  \subsection{Computation of the coproduct for odd spheres (part I)}\label{sec:compspheres1}

Let $S^n$ be an odd dimensional sphere. From the computation of Cohen-Jones-Yan in \cite{CJY}, we have that $H_{\ast
}(\Lambda S^{n})=\textstyle{\bigwedge} (A)\otimes \mathbb{Z}[U]$ as a Chas-Sullivan algebra, with
generators $A\in H_0(\Lambda S^n)$, $1=[S^n]\in H_n(\Lambda S^n)$ and $U\in
H_{2n-1}(\Lambda S^n)$. 
We fix the orientation of the class $A\wedge U\in H_{n-1}(\Lambda S^n)$ by
requiring that, for $Z\colon \Sigma_{n-1}\to \Omega S^n$ a cycle
representing it, the map $\Sigma_{n-1}\times S^1\to S^n$ taking $(a,t)$ to $%
Z(a)(t)$ in homology is $(-1)^n $ times the fundamental class $[S^n]$. (The
sign $(-1)^n=-1$ is chosen for compatibility with the normal orientation of $%
\Delta S^n$ inside $S^n\times S^n$, see Remark~\ref{rem:orientations}.)  This fixes the orientation of 
$U$, and hence determines the orientation of each class $U^{\wedge k}$ and $A\wedge U^{\wedge k}$.

Let $\iota\colon \Omega S^n\to \La S^n$ denote the inclusion. Note that all classes of the form $A\wedge U^{\wedge n}$ are in the image of $\iota_*$. 
From the Gysin formula \cite[p 151]{GorHin}, we get that
\begin{align*}
   A\wedge U^{\wedge k}&=\iota_*(A\wedge U) \wedge (U)^{\wedge k-1}=\iota_*((A\wedge U)\bullet (A\wedge U)^{\bullet k-1})= (A\wedge U)^{\bullet k}
\end{align*}
where $\bullet$ denotes the Pontrjagin product in $H_*(\Omega S^n)$ induced by concatenation. Note that $A\wedge U^{\wedge k}$ has even degree $k(n-1)$. 
We will use this description of the classes $A\wedge U^{\wedge n}$ to compute their coproduct.

\begin{prop}\label{prop:spheres1}
  Let $n\ge 3$ be odd and write $H_{\ast}(\Lambda S^{n})=\textstyle{\bigwedge} (A)\otimes \mathbb{Z}[U]$ as above. The coproduct $\vee_{\Thom}$  is given on the  generators $A\wedge U^{\wedge n}$
  by the formula 
\begin{align*}
\vee_{\Thom} (A\wedge U^{\wedge k})&=\Sigma _{i=2}^{k-1}(A\wedge U^{\wedge i-1})\times (A\wedge U^{\wedge k-i})  \in H_*(\La S^n\x \La S^n,S^n\x \La S^n\cup \La S^n\x S^n)
\end{align*}
%NOTE: one expects no signs from the commutativity relation as n+1 is even and $A\wedge U^{\wedge i}$ has degree $i(n-1)$ which is even.
\end{prop}

Because each class $A\wedge U^{\wedge i-1}$ has even degree and $n$ is odd, the sign change $(-1)^{n-np}$ equals $(-1)$ for each term, so we have that 
\begin{align*}
\vee (A\wedge U^{\wedge k})=-\vee_{\Thom} (A\wedge U^{\wedge k})&=-\Sigma _{i=2}^{k-1}(A\wedge U^{\wedge i-1})\times (A\wedge U^{\wedge k-i}). 
\end{align*}
We will use the Thom version of the coproduct in the proof. 

\begin{proof} 
 A circle on $M$ is a pair $(\gamma, V )$ where 
$\ga(S^1)\subset M$ is the nonempty intersection of
$M$ with a 2-plane (not necessarily through the origin) in $\RR^{n+1}$, parametrized
at constant speed, and $V$ is a unit vector in $T_{\ga(0)}M$ with 
$\ga'(0) = \lambda V$ for some $\la\ge 0$, with  $\la\neq 0$ if and only if  $\ga$ is non-constant.

Let $p\in M$ and let $V$ be a unit vector in $T_pM$. The class $A\wedge U \in H_{n-1}(\La M)$ 
can be represented by the space $C_{p;V}$ of circles starting at $p$ with initial velocity
of the form $\la V$ for some $\la\ge 0$.
We want to use Proposition~\ref{prop:strong4} to compute the coproduct of $A\wedge U^{\wedge k}=(A\wedge U)^{\bullet k}$.

We can parametrize the class $C_{p;V}$ with $S^{n-1}\cong D^{n-1}/\del D^{n-1}$, identifying $D^{n-1}$ with the unit vectors $L\in \RR^{n+1}$ perpendicular to $V$ and such that $L\cdot p\ge 0$. 
Then $(V,L)$ represents 
the circle $(\ga,V)$ with $\ga$ the intersection of the plane $(V,L)$ with the sphere. The circle is a constant loop exactly when $L$ is tangent to the sphere, which happens exactly when $L\in \del D^{n-1}$, so we get a relative class $$Z_{AU}\colon (S^{n-1},\{b\}) \rar (\La S^n,S^n)$$ for $b$ the South pole of the sphere representing the collapsed
$\del D^{n-1}$. 

We can describe the product $(A\wedge U)^{\bullet k}$ as $C_{p,V_1}\bullet \dots\bullet C_{p,V_k}$ for $V_1,\dots,V_k$ pairwise distinct tangent vectors at $p$, which defines a chain representative 
$$Z_k\colon ((S^{n-1})^k,\{b\}^k) \rar (\La S^n,S^n). $$
When $k=1$, $Z_k$ consists of simple and constant loops, giving $\vee Z_1=0$ by Theorem~\ref{thm:coproduct2}, which agrees with the statement in the proposition.
So assume $k\ge 2$. The resulting chain $E(Z_k)=e_I\circ (Z_k\x I)$ is not at all transverse to the diagonal $\De S^n$ as all loops in the image of $Z_k$ have a self-intersection at time $s=\frac{i}{k}$ for
$i=1,\dots,k-1$.  We can deform $Z_k$ using the first component of the
tubular embedding  $$\nu_{\GH}=(\nu_{\GH}^{1},\nu_{\GH}^{2})\colon e^*(TM)|_{\F_{(0,1)}} \rar \La\x (0,1)$$
of Proposition~\ref{coemb}: Define $\tilde Z_k\colon  ((S^{n-1}) ^k,\{b\}^k) \rar (\La S^n,S^n)$
by
$$\tilde Z_k(x)=\nu_{\GH}^1\big(\dots \big(\nu_{\GH}^1\big(\nu_{\GH}^1\big(Z_k(x),\frac{1}{k},\la(x)(V_1-V_2)\big),\frac{2}{k},\la(x)(V_2-V_3)\big)\dots \big),\frac{k-1}{k},\la(x)(V_{k-1}-V_k)\big)$$
where $\la(x)=\ell(Z_k(x))$ is the total length of the loop $Z_k(x)$. In particular, when $Z_k(x)$ is constant (for $x=(b,\dots,b)$), we still have that $\tilde Z_k(x)$ is constant.
Here we choose the function $\mu$ in the definition of the push map $h$ of Lemma~\ref{H}  that controls $\nu_{\GH}$ in such a way that the deformation $\nu_{\GH}$ at time $\frac{i}{k}$ only affects the loop in the interval $(\frac{i}{k}-\frac{1}{2k},\frac{i}{k}+\frac{1}{2k})$. 
The definition of $\tilde Z_k$ then makes sense because $(Z_k(x),\frac{i}{k})$ lies in $\F_{(0,1)}$ for each $i=1,\dots,k-1$.  The effect of this deformation is to remove all the self-intersections at time $\frac{i}{k}$ for each $i$. The only remaining self-intersections are those coming from $Z_k(x)$ being constant on
a subinterval $[\frac{i}{k},\frac{i+1}{k}]$; after pulling the above loops tight, we can ensure that such self-intersections precisely happen at times $\frac{2i-1}{2k}$.   These non-trivial intersections are parametrized by the submanifold $\cup_{i=1}^k (S^{n-1})^{i-1}\x \{b\} \x (S^{n-1})^{k-i}\x \{\frac{2i-1}{2k}\} \subset (S^{n-1})^k\x I$. 

In the notation of Proposition~\ref{prop:strong4}, we have 
$\Si=(S^{n-1})^k$, with $\Si_0=(\{b\})^k$ and $\Si_\B=\Si_0\x I\cup \Si\x \del I\subset \Si\x I$. Then $E(\tilde Z_k)\colon  \Si\x I\minus \Si_\B \to M\x M$ and intersects  the diagonal in
$$\Si_\De=\bigcup_{i=1}^k (S^{n-1})^{i-1}\x \{b\} \x (S^{n-1})^{k-i} \x \{\frac{2i-1}{2k}\}=\overline{\Si_\De}.$$
We are left to check that the intersection is transverse. Let $x=(x_1,\dots, x_{i-1},b,x_{i+1},\dots,x_k,\{\frac{2i-1}{2k}\})$ be an element of $\Si_{\De}$. The tangent space $T_x(\Si\x I)$ at $x$ is isomorphic to
$$ D(x_1)\x \dots\x D(x_{i-1})\x D(b)\x D(x_{i+1})\x \dots,D(x_k)\x [\frac{2i-1}{2k}-\eps,\frac{2i-1}{2k}+\eps]$$
where $D(y)$ denotes a small disc centered at $y\in S^{n-1}$, and where $D(b)\x  [\frac{2i-1}{2k}-\eps,\frac{2i-1}{2k}+\eps]$ the directions  in  $T(\Si\x I)$ that are normal to $T\Si_\De$. 
 Transversality follows from the fact that the points of $D(b)\minus \{b\}$ correspond to all the directions $L$ that are close to tangent at $p$, perpendicular to $V_i$, generating in $T(S^n\x S^n)/T\De S^n$ the subspace that is normal to $(0,V_i)$, 
while varying
$t\in [\frac{2i-1}{2k}-\eps,\frac{2i-1}{2k}+\eps]$ gives the remaining normal direction $(0,V_i)$.

After applying the cut-map, and throwing away the terms $i=1$ and $i=k$ that have image in $S^n\x \La S^n$ and $\La S^n\x S^n$ respectively, we obtain that
$$\vee_{\Thom} (A\wedge U^{\wedge k})=\sum_{i=2}^{k-1}(A\wedge U)^{\bullet i-1}\x (A\wedge U)^{\bullet k-i}$$
as claimed in the statement of the proposition. There is no sign as we have chose the orientation of $A\wedge U$ precisely so that the identification
$D(b)\x  [\frac{2i-1}{2k}-\eps,\frac{2i-1}{2k}+\eps]\to N\De S^n$ is orientation preserving, and hence we also get that the identification $T(\Si\x I)\cong E(\tilde Z_k)^*N\De S^n\op T\Si_\De$  along $\Si_{\De}$ is orientation preserving as the subspaces $D(x_i)$ are all even dimensional. 
\end{proof}

\begin{rem}\label{rem:equsimple}
\begin{figure}[ht]
\includegraphics[width=4cm]{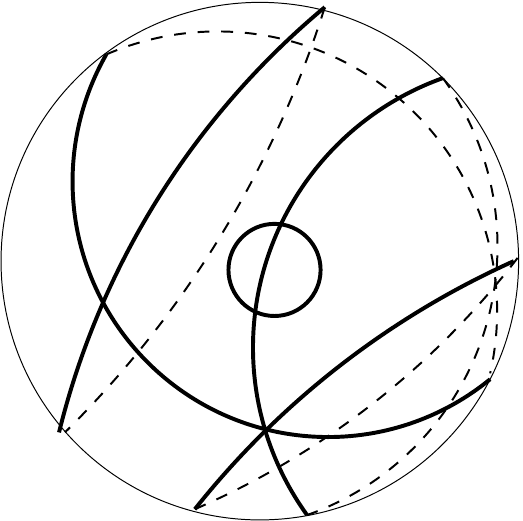}
\caption{Connected union of 5 circles on $S^2$}
\label{fig:circles}
\end{figure}

Let $G\subset S^{n}$ be a connected union of $k$ circles on 
$S^n$ (e.g.~as in Figure~\ref{fig:circles}). Let $\Lambda _{G}\subset \La=\Lambda S^n$
be the set of loops $\gamma \colon S^{1}\rightarrow S^n$ with image $G$ that are
injective except at the ``vertices": We allow $\gamma \in \Lambda _{G}$ to
have a $k$-fold self-intersection with $k>1$ only at a point where two
distinct circles in $G$ intersect, or if $G$ consists of a single point. (In
other words, $\gamma$ is an ``Eulerian path'', as in the K{\"o}nigsberg
bridges problem.) Note that $\Lambda _{G}$ is nonempty by Euler's theorem.
If $\gamma \in \Lambda _{G}$, it is easy to see that $\gamma $ has at most $%
k $-fold self-intersections, since there are at most $2k$ edges at each
vertex. \ Let $\mathcal{G}_{k}\subset \Lambda $ be the union of all the $%
\Lambda _{G}$, where $G$ is the union of $k$ circles: 
\begin{equation*}
\mathcal{G}_{k}=\underset{%
\begin{array}{c}
G=\{c_{1},...,c_{k}\} \\ 
\gamma \in \Lambda _{G}%
\end{array}%
}{\bigcup}\hspace{-10mm}\gamma \ \ \ \ \subset \Lambda
\end{equation*}%
\ Then it follows from Theorem~\ref{thm:coproduct2} that 
\begin{equation*}
\vee ^{k}=0\text{ on }\mathcal{G}_{k}.
\end{equation*}
The space $\underset{k}{\cup }\mathcal{G}_{k}$ (or some subset thereof; such
as the subset of loops parametrized proportional to arc length, or with an
orientation condition) has been suggested as a ``small model", a subspace of 
$\Lambda $ that is invariant under the natural $O(2)$-action, and that
reflects in some sense the equivariant topology of $\Lambda$\footnote{Wilhelm Klingenberg, private conversation}. More examples of small models that have been studied during
the search for closed geodesics include the finite dimensional approximation of
Morse \cite{Morse},  the space of biangles \cite{BTZ}, and the labeled configuration space model of  \cite{CarCoh}. See also Remark~\ref{rem:simplemodel} for a non-equivariant model.
\end{rem}

\section{The lifted homology coproduct and  cohomology product}
\label{sec:extend}

In this section we will define a lift of the coproduct $\vee$ to the
non-relative chains $C_*(\Lambda)$, and  of the product $\oast$ to the non-relative cochains $C^*(\Lambda)$. This will allow us later in the section to analyse the, now meaningful, compositions
$\widehat\vee\circ \wedge$ and $\wedge\circ \widehat\vee$.

\medskip

As above, we let $e\colon \Lambda \longrightarrow M$ denote the evaluation
map at $0$, and we write $i\colon M\to \Lambda$ for the inclusion as constant
loops. We also write $1$ for the map induced by the identity on chains,
cochains, homology or cohomology.

The maps 
\begin{equation*}
\xymatrix{\La \ar[r]_e & M \ar@/_1pc/[l]_i }
\end{equation*}
satisfy that $e\circ i$ is the identity map on $M$. Hence they induce a
decomposition 
\begin{equation*}
C^{\ast }(\Lambda )\cong C^{\ast }(\Lambda ,M)\oplus C^{\ast }(M) \ \ \ 
\text{ and } \ \ \ H^{\ast }(\Lambda )\cong H^{\ast }(\Lambda ,M)\oplus
H^{\ast }(M)
\end{equation*}
(see e.g.~\cite[p147]{Hatcherbook}).
This allows to define an ``\textit{extension by 
}$0$" $\widehat{\oast}$ of the product $\oast$ by setting

\begin{itemize}
\item $\widehat{\circledast }=\circledast$ on $C^{\ast }(\Lambda ,M)\otimes
C^{\ast }(\Lambda ,M)$;

\item $\widehat{\circledast }=0$ on $C^{\ast }(M)\otimes C^{\ast}(\Lambda)$
and $C^{\ast }(\Lambda)\otimes C^{\ast }(M)$.
\end{itemize}

The product $\widehat{\circledast }$ will in addition have the property that its image lies in $%
C^{\ast }(\Lambda ,M)\subset C^{\ast }(\Lambda )$. Note also that the
splitting map $C^*(\Lambda)\to C^*(\Lambda,M)$ is simply the map 
\begin{equation}  \label{equ:split}
(1-e^*i^*)\colon C^*(\Lambda)\longrightarrow C^*(\Lambda,M).
\end{equation}
This is the standard formula for the splitting, and one can check that for
any cochain $x\in C^*(\Lambda)$, the cochain $(1-e^*i^*)x$ vanishes on any
chain $A\in i_*C_*(M)$, so the map $(1-e^*i^*)$ indeed has image in $C^*(\Lambda,M)$%
. Moreover, the map takes $e^*C^*(M)$ to 0 and $C^*(\Lambda,M)$ to itself
via the identity.

\begin{thm}
\label{thm:liftedproduct1} The maps $e\colon \Lambda \leftrightarrows M : i$
induce a natural lift
\begin{equation*}
\widehat{\circledast }\colon C^{p}(\Lambda )\otimes C^{q}(\Lambda )\longrightarrow
C^{p+q+n-1}(\Lambda )
\end{equation*}%
of the product $\circledast$ of Definition~\ref{def:cast}. It is given by
the following formula: If $x,y\in C^{\ast }(\Lambda )$,  then 
\begin{equation*}
\begin{aligned}\label{proformula} x\widehat{\circledast }y&=(1-e^{\ast
  }i^{\ast })x\circledast (1-e^{\ast }i^{\ast })y \\
  %&=(-1)^{n+pn}(1-e^{\ast }i^{\ast })x\circledast_{\Thom} (1-e^{\ast }i^{\ast })y
\end{aligned}
\end{equation*}
considering $(1-e^*i^*)$ as a map from $C^*(\Lambda)$ to $%
C^*(\Lambda,M)\subset C^*(\Lambda)$, and has the following properties:

\begin{enumerate}[(a)]

\item The product $\widehat{\circledast }$ is the unique ``extension by 0"
on the trivial loops, that is satisfying that 
\begin{equation*}
\langle x\widehat{\circledast }y,Z\rangle=0\text{ if }x\in e^{\ast }C^{\ast
}(M),y\in e^{\ast }C^{\ast }(M),\text{\textit{\ or }}Z\in i_{\ast }C_{\ast}(M), 
\end{equation*}
while $x\widehat\oast y=x\oast y$ if $x,y\in C_*(\La,M)$. 

\item The associated cohomology product satisfies the following graded associativity and commutativity relations: 
$$[x]\widehat\oast [y]=(-1)^{pq+1}[y]\widehat\oast [x] \ \ \ \textrm{and}\ \ ([x]\widehat\oast [y])\widehat\oast [z]=(-1)^{r+1}[x]\widehat\oast ([y]\widehat\oast [z])$$
for any $[x]\in H^{p-n}(\Lambda)$,  $[y]\in H^{q-n}(\Lambda)$ and $[z]\in H^{r-n}(\Lambda)$.

\item Morse theoretic inequality: Let $[x],[y]\in H^{\ast}(\Lambda )$, then 
\begin{equation*}
\Cr([x]\widehat{\circledast }[y])\geq \Cr([x])+\Cr([y])
\end{equation*}
where $\Cr([x])=\sup\{l\in \mathbb{R} \, |\, [x] \text{ is supported on }
\Lambda^{\ge L}\}$ for $\Lambda^{\ge L}$ the subspace of loops of energy at
least $L^2$.
\end{enumerate}
\end{thm}

Recall from Section~\ref{H1loopssec} that the loops of energy at most $L^2$
also have length at most $L$ as $\ell(\gamma)^2\le E(\gamma)$, for $\ell$
the length function and $E$ the energy.

\begin{proof}
We define the lifted product $\widehat{\circledast }$ by the following
commutative diagram: 
\begin{equation*}
\xymatrix{ C^{j}(\Lambda )\otimes C^{k}(\Lambda ) \ar@{->>}[d]_{p^{\ast
}\otimes p^{\ast }}\ar[r]^-{\widehat{\circledast}} & C^{j+k+n-1}(\Lambda )
\\ C^{j}(\Lambda ,M)\otimes C^{k}(\Lambda ,M)\ \ar[r]^-{\circledast}& \
C^{j+k+n-1}(\Lambda ,M) \ar@{^(->}[u]_{q^{\ast }} }
\end{equation*}
where $p_*=(1-i^*e^*)$ is the splitting map (\ref{equ:split}) and $q^*$ is
the canonical inclusion. We see that this gives the formula in the
statement, where we have though suppressed the inclusion $q^*$. As the maps $%
e$ and $i$ are natural, this lift is natural in $M$.

\smallskip

We check (a): We have that $\langle x\widehat{\circledast }y,Z\rangle=
\langle p^*x\oast p^*y,q_*Z\rangle$ for $q_*\colon C_*(\Lambda)\to
C_*(\Lambda,M)$ the dual of the map $q^*$. Now the latter bracket is $0$ if $%
x\in e^{\ast }C^{\ast }(M)$ as $p^*(x)=0$ in that case, if $y\in e^{\ast
}C^{\ast }(M)$ as $p^*y=0$ in that case, or if $Z\in i_{\ast }C_{\ast}(M)$
as $q_*(Z)=0$ in that case. On the other hand, $p^*(x)=x$ and $p^*(y)=y$ for $x,y\in C^*(\La,M)$, giving that $x\widehat\oast y=x\oast y$ in that case. And because $C_*(\Lambda)\cong
C_*(\Lambda,M)\oplus C_*(M)$,
this determines the map.

\medskip

Graded associativity and commutativity follows directly from the graded commutativity of $%
\oast
$, as proved in Theorem~\ref{thm:coGHalg}. For associativity, if $[x]$, $[y]$, 
$[z]\in H^{\ast}(\Lambda)$ are as in the statement, 
\begin{align*}
([x]\widehat{\circledast }[y])\widehat{\circledast }[z] &= (1-e^{\ast
}i^{\ast })\Big( (1-e^{\ast}i^{\ast })[x]\circledast (1-e^{\ast }i^{\ast
})[y] \Big) \circledast (1-e^{\ast }i^{\ast })[z] \\
&= (-1)^{r+1}\Big( (1-e^{\ast}i^{\ast })[x]\circledast (1-e^{\ast }i^{\ast })[y] \Big) %
\circledast (1-e^{\ast }i^{\ast })[z] \\
&=(-1)^{r+1} (1-e^{\ast}i^{\ast })[x]\circledast \Big((1-e^{\ast }i^{\ast })[y]
\circledast (1-e^{\ast }i^{\ast })[z]\Big) \\
&=(-1)^{r+1} (1-e^{\ast}i^{\ast })[x]\circledast (1-e^{\ast }i^{\ast })\Big((1-e^{\ast
}i^{\ast })[y] \circledast (1-e^{\ast }i^{\ast })[z]\Big) \\
&=(-1)^{r+1}[x]\widehat{\circledast }([y]\widehat{\circledast }[z])
\end{align*}%
where the first and last equality hold by definition of $\widehat{\oast}$,
the second and fourth using that any product $[a] \circledast [b] \in
H^*(\Lambda,M)\cong \ker i^{\ast }$ so $(1-e^{\ast }i^{\ast })=1$ on such
products. Finally the middle equality is the graded associativity of $\circledast $. This proves statement (b).

The last property will be deduced from the corresponding results for $\oast$ in \cite{GorHin}, which is justified by our Theorem~\ref{thm:cocoequ}.
Given $[x]\in H^*(\Lambda)$, write $[x]=[x_0]+[x_1]$ with $x_0\in e^*C^*(M)$ and $x_1\in q^*C^*(\Lambda,M)$. Then $\Cr([x])=\Cr([x_1])$ as
$[x_0]$ is represented by loops of length $0$. Now $(1-i^*e^*)[x]=[x_1]$,
which gives that $\Cr\big((1-i^{\ast }e^{\ast })[x]\big)=\Cr([x_1])=\Cr([x])$.
So 
\begin{align*}
\Cr([x]\widehat{\circledast }[y]) &=\Cr((1-i^{\ast }e^{\ast })[x]\circledast
(1-i^{\ast }e^{\ast })[y]) \\
&\geq \Cr((1-i^{\ast }e^{\ast })[x])+\Cr((1-i^{\ast }e^{\ast })[y]) = \Cr([x])+\Cr([y]),
\end{align*}
where the middle inequality is given by \cite[(1.7.3)]{GorHin}. Statement
(c) follows, which finishes the proof of the theorem.
\end{proof}

Dually, we have the following result: 

\begin{thm}
\label{thm:extendedcoproduct} The maps $e\colon \Lambda \leftrightarrows M : i$
induce a natural lift
\begin{equation*}
\widehat{\vee}\colon C_{*}(\Lambda )\longrightarrow C_*(\Lambda) \ot C_*(\Lambda )
\end{equation*}%
of the coproduct $\vee$ of Definition~\ref{def:stickyco}. It is given by the
following formula: If $Z\in C_{\ast }(\Lambda)$, then 
\begin{equation*}
\widehat{\vee}Z=(1-i_*e_*)\otimes(1-i_*e_*)\vee (q_*Z)
\end{equation*}
with $(1-i_*e_*)\otimes (1-i_*e_*)\colon C_*(\Lambda,M)\ot C_*(\Lambda,M) \longrightarrow
C_*(\Lambda)\times C_*(\Lambda)$ induced by the splitting, and $q_*\colon C_*(\Lambda)\to C_*(\Lambda,M)$
the projection. It has the following properties:

\begin{enumerate}[(A)]

\item The coproduct $\widehat{\vee}$ is the unique ``extension by 0" on the
trivial loops, satisfying 
\begin{equation*}
\langle x,\widehat \vee Z\rangle=0\ \text{ if }\ x\in e^{\ast }C^{\ast }(M)\otimes C^*(\Lambda) \ \text{or} \ C^*(\Lambda)\otimes e^*C^*M), 
\text{\textit{\ or }} Z\in i_{\ast }C_{\ast}(M).
\end{equation*}

\item Duality: The lifted product and coproduct, and the Kronecker
product satisfy 
\begin{equation*}
\langle x\widehat{\circledast}y,Z\rangle=\langle x\otimes y,%
\widehat{\vee }Z\rangle
\end{equation*}
for any $x\in C^{p}(\Lambda )$, $y\in C^q(\Lambda)$ and $Z\in
C_{p+q+n-1}(\Lambda )$.

\item Support:  The lifted coproduct $\widehat\vee$ and its iterates $\widehat\vee^k$ have the following support property:

\begin{enumerate}[(i)]

\item If $Z\in C_{\ast }(\Lambda)$ has the property that every nonconstant
loop in the image of $Z$ has at most $k$-fold intersections, then%
\begin{equation*}
\widehat\vee^{k}[Z]=\widehat\vee^{k}(\Delta[Z])=0\in H_*(\Lambda^k).
\end{equation*}

\item If $Z\in C_{\ast }(\Lambda)$ is a cycle with the property that every
nonconstant loop in the image of $Z$ has at most a $k$-fold intersection at
the basepoint, then%
\begin{equation*}
\widehat\vee^{k}[Z]=0\in H_*(\Lambda^k).
\end{equation*}
\end{enumerate}
\end{enumerate}
\end{thm}

\begin{proof}
We define the extended coproduct $\widehat{\vee}$ to be the map making the
following diagram commutative 
\begin{equation*}
\xymatrix{C_*(\La) \ar@{->>}[d]_{q_*} \ar[r]^-{\poc} & C_*(\La\x\La) \\
C_*(\La,M) \ar[r]^-{\vee} & C_*(\La,M)\ot C_*(\La,M)\ar@{^(->}[u]_{P_*} }
\end{equation*}
where the inclusion $P_*=(1-i_*e_*)\otimes(1-i_*e_*)$. This gives precisely the formula in
the statement. Note that in order to lift the coproduct $\vee$, we need to
``complete" $\vee$ by subtracting something coming from $C_*(\Lambda)\otimes
C_*(M)+C_*(M)\otimes C_*(\Lambda)$ that has the same boundary in that complex, so that
if $\vee A$ was a relative cycle, $\widehat{\vee} A$ becomes an actual
cycle. This is precisely what the map $(1-i_*e_*)\otimes(1-i_*e_*)$ does.

Statement (A) follows from the fact that $q_*(Z)=0$ if $Z\in i_*H_*(M)$ and
the fact that $P^*$ vanishes on the image of $C^*(M)\otimes C^*(\Lambda)$ and $%
C^*(\Lambda)\otimes C^*(M)$ as $(1-e^*i^*)=0$ on $e^*C^*(M)$. And the uniqueness
follows again from the splittings, as in statement (a) in Theorem~\ref%
{thm:liftedproduct1}.

Statement (B) follows from the fact that 
\begin{equation*}
\begin{aligned}\  \langle
  x\widehat{\circledast}y,Z\rangle %&=(-1)^{q(n-1)}\langle (1-e^*i^*)x\oast_{\Thom}(1-e^*i^*)y,Z\rangle \\
  &=\langle \vee^*\big((1-e^*i^*)x\otimes(1-e^*i^*)y\big),Z\rangle \\
&=\langle x\otimes y,\big((1-i_*e_*)\otimes(1-i_*e_*)\big){\vee}Z\rangle =\langle x\otimes y,\widehat{\vee }Z\rangle \end{aligned}
\end{equation*}
when $x\in C^{p}(\Lambda )$, $y\in C^q(\Lambda)$ and $Z\in
C_{p+q+n-1}(\Lambda )$.

Finally statement (C) follows from Theorem~\ref{thm:coproduct2} and the fact that
$$\widehat\vee^k[Z]=(1-i_*e_*)\ot\dots\ot(1-i_*e_*)\vee^k[Z]$$
and hence $\widehat\vee^k[Z]=0$ if and only if $\vee^k q_*[Z]=0$ as $(1-i_*e_*)$ is injective on the image of $\vee$. 
\end{proof}

\subsection{Triviality of product following the coproduct}

\label{sec:vanishing}

We will show in this section that, with our choice of lift of the coproduct, the composition $\wedge\circ \widehat\vee$ is trivial, that is the product and coproduct behave like the bracket and cobracket in an involutive Lie bialgebra.
After completing our computation of the coproduct on the loop space of spheres,  we show in Remark~\ref{rem:Frob} that the reversed composition 
$\widehat\vee\circ \wedge$ does not satisfy the expected Frobenius relation, and show in Remark~\ref{rem:genus1} that introducing a circle action in the middle of the composition   $\wedge\circ \widehat\vee$ makes it a non-trivial operation.

\begin{thm}
\label{thm:trivial} The composition 
\begin{equation*}
C_*(\Lambda) \overset{\widehat{\vee}}{\longrightarrow }C_{*}(\Lambda)\otimes C_*(\Lambda) \overset{\wedge}{\longrightarrow } C_{*+1-2n}(\Lambda)
\end{equation*}
of the lifted coproduct followed by the  Chas-Sullivan product induces the
zero map in homology. The same holds for the composition $\wedge_{\Thom}\circ \widehat\vee_{\Thom}$. 
\end{thm}

\begin{rem}
The above result implies that the compositions $\wedge\circ \widehat{\vee}_{\Thom}$ and $\wedge_{\Thom}\circ \widehat{\vee}$ are trivial mod~$2$. As we will see in Remark~\ref{rem:wedgepoc},
reducing mod $2$ is necessary for these other vanishing statements.
\end{rem}

We will give two proofs of the theorem: a short proof  using signs assuming that $2$ is invertible in the coefficients,
and  a maybe more enlightening geometric proof showing that this vanishing is closely related to the vanishing of the
so-called trivial coproduct. We will in fact show that the Chas-Sullivan product is essentially trivial on the figure eight space $\La\x_M\La$, which is also the target of the coproduct. (See Proposition~\ref{prop:CSF2}.)

\begin{proof}[Proof of Theorem~\ref{thm:trivial} using signs when $2$ is invertible]
 Recall from Theorems~\ref{thm:CSalg}  and~\ref{thm:coGHalg} that the product $\wedge$ is graded commutative while the coproduct $\widetilde\vee$ is graded anti-commutative, after a degree $n$ shift of the homology.
 More precisely, for $A\in C_{k+n}(\Lambda;k)$, we have 
  $$\vee A =\sum_{p+q=k+1}A_p^0\ot A_q^1\ \ \simeq  \sum_{p+q=k+1}(-1)^{pq+1}A_p^1\ot A_q^0,$$
for $A_p^0\ot A_q^1$ the summand of bidegree $(p+n,q+n)$  in $\vee A$. 
Hence $\wedge(\vee(A))=\sum_{p+q=k+1}A_p^0\wedge A_q^1\simeq \sum_{p+q=k+1}(-1)^{pq+1}A_p^1\wedge A_q^0$. On the other hand, by the graded commutativity of the product, we also have
$$\wedge(\vee(A))=\sum_{p+q=k+1}A_p^0\wedge A_q^1\ \ \simeq \sum_{p+q=k+1}(-1)^{pq}A_p^1\wedge A_q^0.$$
Hence $\wedge(\vee([A]))=-\wedge(\vee([A]))$ in homology and therefore must be 0 if $2$ is invertible.

The vanishing for the Thom-signed versions follows from the fact that the sign change is the same for both operations, so $\wedge_{\Thom}\circ \widehat\vee_{\Thom}=\wedge\circ \widehat\vee$. 
\end{proof}

For the geometric proof of the result, we recall first the \emph{trivial coproduct}, whose triviality was pointed
out by Tamanoi \cite{Tam}. One can define a coproduct using the sequence of
maps analogous to that of the coproduct studied here, but fixing the
parameter $s=\frac{1}{2}$. More precisely, let 
\begin{equation*}
\F_{\!\frac{1}{2}}=\{\gamma\in \Lambda\ |\ \gamma(\frac{1}{2})=\gamma(0)\} \
\subset \Lambda
\end{equation*}
and consider its neighborhood 
\begin{equation*}
U_{\frac12}=U_{\frac12,\varepsilon}=\{\gamma\in \Lambda\ |\ |\gamma(\frac{1}{2}%
)-\gamma(0)|<\varepsilon\} \ \subset \Lambda.
\end{equation*}
Then restricting the tubular neighborhood $\nu_{\GH}$ to $s=\frac{1}{2}$
defines a tubular neighborhood 
\begin{equation*}
\nu_{\frac12}\colon  \xymatrix{e^*(TM) \ar[d]\ar[r] & \La \\ \F_{\!\frac{1}{2}}& }
\end{equation*}
of $\F_{\!\frac{1}{2}}$ inside $\Lambda$, with image $U_{\frac12}$. Denote by $%
\kappa_{\frac12}=(k_{\frac12},v_{\frac12})$ the associated collapse map, $R_{\frac12}\simeq k_{\frac12}$ the
restriction of $R_{\GH}$ to time $s=\frac{1}{2}$, and 
\begin{equation*}
\tau_{\frac12}=e_{\frac12}^*(\tau_M)\in C^n(\Lambda\times\Lambda,U_{T,\varepsilon_0}^c)
\end{equation*}
the associated Thom class, for $e_{\frac12}\colon \La\to M\x M$ the evaluation at $0$ and $\frac12$. This yields a degree $-n$ coproduct which we can
model via the sequence of maps 
\begin{equation*}
\vee_{\!\frac{1}{2}}\colon C_*(\Lambda)\xrightarrow{\tau_{\frac12}\cap} C_{*-n}(U\!_{\frac12})\xrightarrow{R_{\frac12}} C_{*-n}(\F_{\!\frac{1}{2}})\xrightarrow{\cut}C_{*-n}(\Lambda\times \Lambda).
\end{equation*}
Tamanoi, who also suggested the description of the coproduct in terms of the
retraction map $R_{\frac12}$, showed in \cite{Tam} that this operation is almost
identically zero!
One way to formulate what Tamanoi proved is the following: The coproduct $\vee_{\frac12}$ satisfies the Frobenius identity
$$\vee\!_{\frac12}([A]\wedge [B])=(\vee\!_{\frac12}[A])\wedge [B]=(-1)^{n-np}[A]\wedge (\vee\!_{\frac12}[B])\in H_*(\La\x\La)$$
(see \cite[Thm 2.2]{Tam}). Tamanoi then applied this identity to either the left or right term being the unit $[M]$, and used this to conclude that the coproduct $\vee_{\frac12}$ can only be non-trivial in degree $n$, and only on the constant loops $H_n(M)\le H_n(\La)$. We prove here a slight refinement of that result, using the same idea. 
Let 
\begin{equation*}
\vee^\F_{\!\frac{1}{2}}\colon C_*(\Lambda)\xrightarrow{\tau_{\frac12}\cap}C_{*-n}(U_{\frac12})\xrightarrow{R_{\frac12}} C_{*-n}(\F_{\!\frac{1}{2}})\cong C_{*-n}(\La\x_M\La)
\end{equation*}
be the trivial coproduct prior to the cutting map, so that $\vee_{\!\frac{1}{%
2}}=\cut\circ \vee^\F_{\!\frac{1}{2}}$. 

\begin{lem}
  \label{lem:precop2}
The induced map in homology $\vee^\F_{\!\frac{1}{2}}\colon H_*(\La)\to H_{*-n}(\La\x_M\La)$, under the splittings
$H_*(\La)\cong H_*(\La,M)\oplus H_*(M)$ and $H_*(\La\x_M \La)\cong H_*(\La\x_M \La,\De M)\oplus H_*(M)$, is only non-zero on the summands $H_*(M)$ of the source and target. On that summand it is the map capping with $(-1)^ne_M$, for $e_M$ the Euler class of $M$. 
\end{lem}

\begin{proof}
We first show  that the map $\vee^\F_{\frac12}\colon H_*(\La)\to H_*(\La\x_M\La)$ factors through both
$H_*(M\x_M\La)$ and $H_*(\La\x_MM)$, using that $\vee^\F_{\frac12}=\vee^\F_{\frac34}$ in homology, where we replace $\frac12$ by $\frac34$ everywhere in the definition of the map, and use that $\F_{\frac34}\cong\F_{\frac12}\cong \La\x_M\La$ are all homeomorphic spaces. Indeed, consider the sequence of maps
$$\xymatrix{C_p(\La)\ot C_n(\La)\ar[d]_\x & \ar[l]_-{1\ot i} C_p(\La)\ot C_n(M) \ar[d]^\x & \ni (-1)^{n-np}A\ot [M]\\
  C_{p+n}(\La\x \La)\ar[d]_{[\tau_{\CS}\cap]} & C_{p+n}(\La\ot M) \ar[l]_-{1\x i}\ar[d]^{[(1\x i)^*\tau_{\CS}\cap]} & \\
  C_p(U_{\CS}) \ar[d]_{R_{\CS}} & C_p(U_{\CS}\cap (\La\x M)) \ar[l]_-{1\x i} \ar[d]^{R_{\CS}} &\\
  C_p(\La\x_M\La) \ar[d]_{\concat} & C_p(\La\x_M M) \ar[l]_-{1\x i} \ar[r]^-{[1\x i^*\tau_{\frac12}\cap]} \ar[d]_{[1\x i^*\tau_{\frac12}\cap]} & C_{p-n}(\La\x_M M)\ar[d]^{1\x i}\\
  C_p(\La) \ar[r]^-{[\tau_{\frac34}\cap]} & C_{p-n}(U_{\frac34}) \ar[r]^-{R_{\frac34}} & C_{p-n}(\F_{\frac34}) \cong C_{p-n}(\F_{\frac12})\cong C_{p-n}(\La\x_M\La).
}$$
%where $R'_{\CS}$ is the retraction, homotopic to $R_{\CS}$, pushing the left loop instead of the right one.
Starting with $ (-1)^{n-np}A\ot [M]$ on the top right corner and following the outside of the diagram in homology computes
$$\vee^\F_{\frac34}( (-1)^{n-np}[A]\wedge_{\Thom} [M])=\vee_{\frac12}^\F[A]$$
and we see that this computation factors through
$$H_{p-n}(\La\x_M M)\le H_{p-n}(\La\x_M\La)\cong H_{p-n}(\La\x_M\La,\La\x_MM)\op H_{p-n}(\La\x_MM).$$
An analogous diagram using $M\x \La$ instead of $\La\x M$ shows that the coproduct also factors $H_{p-n}(M\x_M \La)\le H_{p-n}(\La\x_M\La)$. 
Using the splitting $$H_*(\La\x_M\La)\cong H_*(\La\x_M\La,\La\x_MM)\op H_*(\La\x_MM,M)\op H_*(M),$$
we can conclude  that the map $\vee^\F_{\frac12}$ in fact has image in the summand $H_*(M)$ only, because $H_*(M\x_M\La)\to H_*(\La\x_M\La)$ has image in the first and last summand only. 

Finally, the map $\vee^\F_{\frac12}$ respects the splittings
$H_*(\La)\cong H_*(\La,M)\oplus H_*(M)$ and $H_*(\La\x_M \La)\cong
H_*(\La\x_M \La,\De M)\oplus H_*(M)$, that is splits as the sum of two
maps (and no mixed terms). Indeed,
$\vee^\F_{\frac12}=R_{\frac12}\circ [\tau_{\frac12}\cap]$. The
retraction map $R_{\frac12}$ commutes with the maps $i$ and $e$
inducing the splitting. For $[\tau_{\frac12}\cap]$, this property is the commutativity of 
$$\xymatrix{H_*(M) \ar[d]_{[(-1)^ne_M\cap]=[i^*\tau_{\frac12}\cap]}  \ar[r]^i & \ar[d]^{[\tau_{\frac12}\cap]} H_*(\La) \ar[r]^e & H_*(M)  \ar[d]^{[(-1)^ne_M\cap]} \\
    H_{*-n}(M) \ar[r]^i & H_{*-n}(U_{\frac12}) \ar[r]^e & H_{*-n}(M)
  }$$
  which holds because  $i^*\tau_{\frac12}=(-1)^ne_M$ on $M$ as the normal bundle of the diagonal is isomorphic to the tangent bundle, with a change of orientation $(-1)^n$ by our convention (Remark~\ref{rem:orientations}), 
  while  $(-1)^ne^*e_{M}=e_{\frac12}=e_{\frac12}^*\tau_M\in H^n(\La,U_{\frac12}^c)$ because $e_{\frac12}\simeq (e,e)\colon \La\to M\x M$ by sliding the evaluation from $\frac12$ to $0$ along the loop, and $(e,e)^*\tau_M=e^*\De^*\tau_M=(-1)^ne^*e_M$.
  
Hence $\vee_{\frac12}$ is only non-trivial on the summand $H_*(M)$ of the source and target, and has the form given in the proposition. 
  \end{proof}

The Chas-Sullivan product $\wedge_{\Thom}$ is defined as a composition 
\begin{equation*}
H_*(\Lambda)\otimes H_*(\Lambda) \overset{\times}{\longrightarrow }%
H_*(\Lambda\times\Lambda) \xrightarrow{\wedge_{\Thom}^\x} H_*(\Lambda)
\end{equation*}
where $\wedge_{\Thom}^\x$ is the \emph{short} Chas-Sullivan product.
The key ingredient in the geometric proof of Theorem~\ref{thm:trivial} is that  the Chas-Sullivan product behaves similarly to the map $\vee_{\frac12}^\F$
when restricted to the figure eight space $\La\x_M\La$:
\begin{prop}\label{prop:CSF2}
  The short Chas-Sullivan product $\wedge^\x$, restricted to the image of $H_*(\La\x_M\La)\cong H_*(\La\x_M\La,M)\op H_*(M)$ in $H_*(\La\x \La)$, is only non-trivial on the second summand, the constant loops, where it is the map capping with $(-1)^ne_M$, for $e_M$ the Euler class of $M$.
  \end{prop}

  \begin{rem}
    The above statement may sound a little surprising if compared to Proposition~\ref{prop:transCS}: how can the product know whether the loops are trivial or not? Recall that $e_M\cap [M]=\chi(M)$ is the Euler characteristic of $M$. Suppose $\chi(M)\neq 0$ and  $A,B$ are two smooth families of loops such that $A\x B\in C_n(\La\x_M\La)$ is parametrized by $\De M$ under the evaluation at $0$.
    If the loops in the images of $A$ and $B$ were all non-trivial, one would get two non-vanishing vector fields on $M$ by taking the derivative at $0$, which is not possible if $\chi(M)\neq 0$.
    In a generic situation, we will be able to push all the self-intersections away from each other using an appropriate linear combination of these two vector fields, except at the points where both families have trivial loops. This shows that their product indeed has image in the constant loops $H_0(M)\le H_0(\La\x_M\La)\le H_0(\La\x \La)$.  What the proposition says is that
    if we write $$[A\x B]=[A\x B]_0 + [A\x B]'\in H_n(M)\op H_n(\La\x_M\La,\De M)\cong H_n(\La\x_M\La)$$
    then $[A\wedge B]=\wedge[A\x B]_0=(-1)^n\chi_M$, which is also what we can compute geometrically. 
    \end{rem}

\begin{proof}[Proof of Proposition~\ref{prop:CSF2}]
Note first that concatenation gives an identification 
\begin{equation*}
\Lambda\times_M\Lambda \overset{\cong}{\longrightarrow }\F_{\!\frac{1}{2}%
}\subset \Lambda
\end{equation*}
compatible with the splittings $H_*(\La\x_M\La)\cong H_*(\La\x_M\La,M)\op H_*(M)$ and
$H_*(\F_{\!\frac{1}{2}})\cong H_*(\F_{\!\frac{1}{2}},M)\op H_*(M)$. Let $\tilde j\colon \F_{\!\frac{1}{2}}\to \Lambda$ denote the canonical inclusion and consider the diagram 
\begin{equation*}
\xymatrix{ H_*(\La\x \La) \ar[r]^-{\tau_{\CS}\cap} & H_{*-n}(U_{\CS})
\ar[r]^-{R_{\CS}} & H_{*-n}(\La\x_M\La)\\ % \ar[r]^-{\concat} & H_{*-n}(\La)\\
H_*(\La\x_M\La,M)\op H_*(M) \ar[u]_j\ar[d]_\cong \ar[r]^-{j^*\tau_{\CS}\cap} &
H_{*-n}(\La\x_M\La) \ar[d]_\cong \ar[r]^-{id} \ar[u]^j & H_{*-n}(\La\x_M\La,M)\op H_*(M)  \ar[d]^\cong \ar@{=}[u]  \\
H_*(\F_{\!\frac{1}{2}},M)\op H_*(M)  \ar[d]^{\tilde j} \ar[r]^-{\tilde j^*\tau_{\frac12}\cap}& H_{*-n}(\F_{\!\frac{1}{2}}) \ar[r]^-{id}
\ar[d]^{\tilde j} & H_{*-n}(\F_{\!\frac{1}{2}},M)\op H_{*-n}(M) \ar[d]^\cong \\
H_*(\La,M) \op H_*(M) \ar[r]^-{\tau_{\frac12}\cap} & H_{*-n}(U_{\frac12}) \ar[r]^-{R_{\frac12}} & H_{*-n}(\La\x_M \La,M) \op H_{*-n}(M). }
\end{equation*}
The top row of the diagram is the Chas-Sullivan product $\wedge_{\Thom}^\x$, pre-concatenation, 
while the bottom row is the trivial pre-coproduct $\vee^\F_{\!\frac{1}{2}}$
defined above.
The subspaces $\Lambda\times_M\Lambda\cong \F_{\!\frac{1}{2}}$
are both fixed by the retractions, which gives the commutativity of the
right column. The top and bottom squares in the left column commute by
naturality of the cap product, and the middle square because the classes $%
j^*\tau_{\CS}$ and $\tilde j^*\tau_{T}$  both identify with $(-1)^ne^*e_M$, compatibly under the isomorphism $\Lambda\times_M\Lambda \overset%
{\cong}{\longrightarrow }\F_{\!\frac{1}{2}}\subset \Lambda$.

Let $[A]=[A_0]+[A']\in H_*(\Lambda\times_M\Lambda,M)\op H_*(M)  \cong H_*(\F_{\!\frac{1}{2}},M)\op H_*(M)$. Going along the
top of the diagram computes $\wedge_{\Thom}^\x(j_*([A]))\in H_{*-n}(\La\x_M\La,M)\op H_*(M)$. 
On the other hand, by Lemma~\ref{lem:precop2},  going along the bottom of the diagram takes $[A]$ to  $(-1)^ne_M\cap [A_0]$. Together with the commutativity of the diagram, this gives the statement. 
\end{proof}

\begin{proof}[Proof of Theorem~\protect\ref{thm:trivial}]
  By definition, the coproduct $\widehat{\vee}$ has image inside $%
H_*(\Lambda\times_M\Lambda)\subset H_*(\Lambda\times \Lambda)$. Moreover, by
Theorem~\ref{thm:extendedcoproduct}~(B) its image does not intersect the
component of the constant loops $H_*(M\times M)\subset H_*(\Lambda\times
\Lambda)$, hence also not its diagonal subspace $H_*(M)\subset
H_*(\Lambda\times_M\Lambda)\subset H_*(\Lambda\times\Lambda)$. The result is
then a consequence of Proposition~\ref{prop:CSF2}.
\end{proof}

\subsection{Computations in the case of odd spheres (part II)}

\label{sec:computations}

We will now finish our computation of the coproduct when $M=S^n$ is an odd dimensional sphere, as started in Proposition~\ref{prop:spheres1}, and
investigate formulas involving the product and coproduct in that case.

Recall from Section~\ref{sec:compspheres1} the Chas-Sullivan ring
for $S^{n}$, $n\ge 3$ odd, as computed in the paper of Cohen, Jones and Yan 
\cite[Thm 2]{CJY}: 
\begin{equation*}
H_{\ast }(\Lambda S^{n})=\textstyle{\bigwedge} (A)\otimes \mathbb{Z}[U]
\end{equation*}
where $A\in H_{0}(\Lambda S^{n})$, and $U\in H_{2n-1}(\Lambda S^{n})$. This
ring has a unit $1=[M]\in H_{n}(\Lambda S^{n})$, represented by the constant
loops. 
We extend Proposition~\ref{prop:spheres1} to show that 

\begin{prop}
\label{prop:copspheres}The coproduct on $H_*(\Lambda S^n)$, for $n\ge 3$
odd, is given on generators by the formula 
\begin{align*}
\widehat{\vee}_{\Thom} (A\wedge U^{\wedge k}) = -\widehat\vee (A\wedge U^{\wedge k}) &=\hspace{5mm} \Sigma _{j=2}^{k-1}(A\wedge U^{\wedge j-1})\times (A\wedge U^{\wedge (k-j)})\\
\widehat{\vee}_{\Thom} U^{\wedge k}&=\hspace{5mm} \Sigma _{j=2}^{k-1}(A\wedge U^{\wedge j-1}\times
                                  U^{\wedge (k-j)}) + (U^{\wedge j-1}\times A\wedge U^{\wedge (k-j)})\\
   \widehat\vee U^{\wedge k}&=\hspace{5mm} \Sigma _{j=2}^{k-1}-(A\wedge U^{\wedge j-1}\times
                                  U^{\wedge (k-j)}) + (U^{\wedge j-1}\times A\wedge U^{\wedge (k-j)})
\end{align*}
\end{prop}

Before proving the proposition, we use it to confirm our result of the
previous section in the case of odd spheres:

\begin{cor}
\label{cor:spheresvw} In $H_*(\Lambda S^n)$ for $S^n$ an odd sphere with $%
n\ge 3$, we have that 
\begin{align*}
  \wedge\circ \widehat{\vee} (A\wedge U^{\wedge k}) &=0=\wedge\circ \widehat{\vee} (U^{\wedge k}),
\end{align*}
which confirms Theorem~\ref{thm:trivial} in the case of odd spheres.
\end{cor}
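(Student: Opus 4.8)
The plan is to compute directly, using the explicit formulas of Proposition~\ref{prop:copspheres} together with the Chas--Sullivan ring structure $H_*(\Lambda S^n)=\textstyle{\bigwedge}(A)\otimes \Z[U]$ recalled above. First I would record the ingredients: the ring is the exterior algebra on $A\in H_0$ tensored with the polynomial algebra on $U\in H_{2n-1}$, with unit $[M]\in H_n(\Lambda S^n)$, so $A\wedge A=0$ and hence any product of two basis elements that both involve an $A$--factor vanishes. The short Chas--Sullivan product $\wedge_{Th}^\times$ composed with the cross product is exactly the ring multiplication up to the Thom sign; since we only need to show a class is zero, the Thom sign is irrelevant and I will just write $\wedge_{Th}$ for the bilinear product on homology.

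Next I would apply $\wedge_{Th}$ termwise to each of the two expressions in Proposition~\ref{prop:copspheres}. For the first formula,
\begin{equation*}
\wedge_{Th}\,\widehat{\vee}(A\wedge U^{\wedge k})=\sum_{j=1}^{k-2}(A\wedge U^{\wedge j})\wedge_{Th}(A\wedge U^{\wedge(k-1-j)}),
\end{equation*}
and every summand has an $A$ coming from each factor, so each product is a multiple of $A\wedge A\wedge U^{\wedge(k-1)}=0$; hence the whole sum vanishes. For the second formula,
\begin{equation*}
\wedge_{Th}\,\widehat{\vee}(U^{\wedge k})=\sum_{j=1}^{k-2}\Big((A\wedge U^{\wedge j})\wedge_{Th}U^{\wedge(k-1-j)}+U^{\wedge j}\wedge_{Th}(A\wedge U^{\wedge(k-1-j)})\Big).
\end{equation*}
Here each individual summand is $\pm\,A\wedge U^{\wedge(k-1)}$ — nonzero in general — so the vanishing is not termwise. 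Instead I would observe that the two sums run over the same index set and, after the change of variable $j\mapsto k-1-j$ in one of them, the $\ell$-th term of the first sum equals $\pm$ the $\ell$-th term of the second; the point is that the Thom signs are arranged (exactly as in Remark~\ref{rem:vanishing2}, where the product and the coproduct graded-commutativities differ by a single $(-1)$) so that these paired terms cancel rather than add. I would phrase this as: $\widehat{\vee}(U^{\wedge k})$ is graded-antisymmetric under the swap of the two tensor factors up to the relevant sign, while $\wedge_{Th}$ is symmetric up to a sign differing by $(-1)$, forcing $\wedge_{Th}\circ\widehat{\vee}(U^{\wedge k})=0$.

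The main obstacle is bookkeeping the signs in the second case: one has to check that the degrees of the paired classes $A\wedge U^{\wedge j}$ and $U^{\wedge(k-1-j)}$ (namely $j(2n-1)$ and $n+(k-1-j)(2n-1)$, up to the conventions in the statement) together with the fixed $(-1)^{n+\cdots}$ in the Thom-signed commutativity relations of $\wedge_{Th}$ and of $\widehat{\vee}$ produce an overall $(-1)$ between the two members of each pair. Since $n$ is odd this is a short parity computation, and in fact — as noted in the proof of Lemma~\ref{lem:precop} — the unknown scalars in the ring structure can absorb ambiguities, so it suffices to verify the parity of the sign, not its full value. With that check in hand, both displayed expressions vanish, which is the assertion of the corollary and confirms Theorem~\ref{thm:trivial} for odd spheres. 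Alternatively, and more cheaply, I could simply invoke Theorem~\ref{thm:trivial} directly together with Proposition~\ref{prop:copspheres} to see the output must be zero, using the explicit formulas only as a consistency check; but giving the termwise computation makes the cancellation mechanism transparent.
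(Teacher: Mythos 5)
Your overall strategy is the same as the paper's: apply the Chas--Sullivan product termwise to the explicit formulas of Proposition~\ref{prop:copspheres}, dispose of the $A\wedge U^{\wedge k}$ case via $A\wedge A=0=A\wedge_{Th}A$, and for $U^{\wedge k}$ pair the $j$th term of the first sum with the matching term of the second and show they cancel. The first case is complete as you wrote it.

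In the second case there is a genuine gap. You leave the sign computation undone, and the two things you offer in its place do not close it. First, the degrees you quote are wrong: $\wedge$ has degree $-n$, so $\deg(A\wedge U^{\wedge j})=j(n-1)$ (not $j(2n-1)$) and $\deg(U^{\wedge m})=m(n-1)+n$ (not $n+m(2n-1)$). Whether these are even or odd is exactly what feeds into the Thom sign $(-1)^{np+n}$, so getting them wrong would invalidate the computation you propose to do. Second, the claim that ``the unknown scalars in the ring structure can absorb ambiguities, so it suffices to verify the parity of the sign, not its full value'' is not applicable here. In Lemma~\ref{lem:precop} the conclusion is $k\,\chi(M)[*]$ for an undetermined integer $k$, so stray signs in that argument are harmless; here there is no free scalar, and ``the sign is $(-1)$'' versus ``the sign is $(+1)$'' is precisely the difference between the paired terms cancelling and doubling. (And ``parity of the sign'' is just the sign.) The appeal to Remark~\ref{rem:vanishing2} is a reasonable heuristic pointer to the mechanism, but the corollary is meant to be an explicit verification, so the parity check has to actually be carried out. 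The paper does exactly this: it computes $(A\wedge U^{\wedge j})\wedge_{Th}U^{\wedge(k-1-j)}$ and $U^{\wedge(k-1-j)}\wedge_{Th}(A\wedge U^{\wedge j})$ using the degrees above and $n$ odd, and finds them equal to $\pm A\wedge U^{\wedge k-1}$ with opposite signs, so each summand of $\wedge_{Th}\widehat{\vee}(U^{\wedge k})$ vanishes. To fix your proof, correct the degrees, then run the short parity computation you gesture at (for $n$ odd, $j(n-1)$ is even and $(k-1-j)(n-1)+n$ is odd, and the sign $(-1)^{np+n}$ does the rest) and drop the appeal to Lemma~\ref{lem:precop}.
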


\begin{proof}[Proof of Corollary~\protect\ref{cor:spheresvw}]
The fact that $\wedge\widehat{\vee} (A\wedge U^{\wedge k})=0$ follows
from the formula obtained for $\widehat{\vee} (A\wedge U^{\wedge k})$ in
Proposition~\ref{prop:copspheres} together with the fact that $A\wedge
A=0$. For the other computations, we have that the degree $\deg(A\wedge U^{\wedge
j})=j(n-1)$ is even while the degree $\deg(U^{\wedge (k-1-j)})=(k-1-j)(n-1)-n$ is odd, and hence, after shifting their degree by $n$, we have that the first one is odd and the second even. As the product $\wedge$ is graded commutative after the degree shifting, we thus have that 
\begin{align*}
(A\wedge U^{\wedge j-1})\wedge U^{\wedge (k-j)} & = A\wedge U^{\wedge k-1}= U^{\wedge (k-j)}\wedge (A\wedge U^{\wedge j-1}) 
\end{align*}
Given that $k-j$ ranges between $k-2$ and $1$ while $j-1$ ranges between $1$ and $k-2$, the left and right terms in the formula for $\widehat{\vee} U^{\wedge k}$
obtained in Proposition~\ref{prop:copspheres} cancel in pairs when the product is applied,
 which proves the result.
\end{proof}

\begin{rem}
\label{rem:wedgepoc} Note that $\wedge\circ \widehat{\vee}_{\Thom} (U^{\wedge k}) = (2k-4)
A\wedge U^{\wedge k-1} \neq 0$ so  the formula $\wedge\circ \widehat{\vee}=0$ only holds if the product and coproduct are both taken with the algebraic choice of signs, or both with the Thom signs, and not with mixed choices of signs. 
\end{rem}

The first part of Proposition~\ref{prop:copspheres} follows directly from Proposition~\ref{prop:spheres1} as the lift $\widehat\vee=\vee$ when all the  classes involved are non-relative. We will deduce the second part of the proposition from computations made in \cite{GorHin} for the dual cohomology product. 
We recall from \cite[Sec 15]{GorHin} the relative cohomology ring $(H^*(\La S^n,S^n),\oast_{\GH})$ for an
odd sphere $S^n$, where the product $\oast_{\GH}=(-1)^{(n-1)q}\oast_{\Thom}=\oast_{\Thom}$ as $n$ is odd (see Remark~\ref{rem:GHsign}). This ring is generated by four classes: $\omega,
X,Y$ and $Z$ of degrees $n-1,2n-2, 2n-1$ and $3n-2$ respectively, and 
\begin{equation*}
H^{\ast }(\Lambda S^n,S^n) \ =\ \textstyle{\bigwedge} (Y,Z)\otimes \mathbb{Z}%
[\omega,X]/\sim
\end{equation*}
%\ \ =\  \textstyle{\bigwedge} (u)\otimes \mathbb{Z}[t]_{\geq 2}$$ 
for $\sim$ generated by the two relations $X\oast_{\Thom} X=\omega^{\oast_{\Thom} 3}$ (where we note that $\omega^{\oast_{\Thom}k}$ makes sense because $\omega$ has even dimension and $n$ is odd, making $\oast_{\Thom}$ associative) and $X\oast_{\Thom} Y=\omega\oast_{\Thom} Z$.
As explained in \cite{GorHin}, this ring can also be written as $H^{\ast}(\Lambda S^n,S^n)=\textstyle{\bigwedge} (u)\otimes \mathbb{Z}[t]_{\geq 2}$
where $\omega=t^{2}$, $X=t^{3}$, $Y=u t^{2}$ and $Z=ut^{3}$. 
%where $\deg u=1$, $\deg t=0$, 
Here $\mathbb{Z}[t]_{\geq 2}$ denotes the ideal generated by $t^{2}$ and $t^{3}$ in the polynomial ring.

To deduce from this a computation of the homology coproduct, we need the
following compatibility between the homology and cohomology generators.

\begin{lem}
\label{lem:sphereskron} Let $n\ge 3$ be odd. The generators $A,U\in
H_*(\Lambda S^n)$ and $\omega,X,Y$ and $Z$ in $H^*(\Lambda S^n,S^n)$ can be
chosen so that for every $k\ge 1$,  
\begin{align*}
\big\lgl \omega^{\oast_{\Thom} k} , A\wedge U^{\wedge 2k-1}\big\rgl &= 1 \\
\big\lgl X\oast_{\Thom} \omega^{\oast_{\Thom} k-1} , A\wedge U^{\wedge 2k}\big\rgl &= 1 \\
\left\langle Y\circledast_{\Thom} \omega ^{\circledast_{\Thom} k-1},U^{\wedge
(2k-1)}\right\rangle & =1 \\
\left\langle Z\circledast_{\Thom} \omega ^{\circledast_{\Thom} k-1},U^{\wedge
(2k)}\right\rangle &=1
\end{align*}
\end{lem}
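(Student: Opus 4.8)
The plan is to reduce the four identities to the perfectness of the Kronecker pairing in a single degree and then to fix the signs using the Morse--Bott structure of the energy functional on $\Lambda S^{n}$. Since $H_{*}(\Lambda S^{n};\mathbb{Z})=\textstyle\bigwedge(A)\otimes\mathbb{Z}[U]$ is free, the universal coefficient theorem identifies $H^{*}(\Lambda S^{n};\mathbb{Z})$ with $\operatorname{Hom}(H_{*}(\Lambda S^{n};\mathbb{Z}),\mathbb{Z})$, makes the Kronecker pairing perfect in each degree, and gives $H^{d}(\Lambda S^{n},S^{n})=H^{d}(\Lambda S^{n})$ for $d\notin\{0,n\}$. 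Comparing the gradings of $\bigwedge(A)\otimes\mathbb{Z}[U]$ and $\bigwedge(u)\otimes\mathbb{Z}[t]_{\geq 2}$, one sees that in each degree $d\notin\{0,n\}$ there is exactly one of the monomials $U^{\wedge j},A\wedge U^{\wedge j}$ on the homology side and exactly one of $t^{\circledast m},u\circledast t^{\circledast m}$ on the cohomology side, and that the pairs appearing in the statement sit in matching degrees: $\omega^{\circledast k}=t^{\circledast 2k}$ with $A\wedge U^{\wedge 2k-1}$ in degree $(2k-1)(n-1)$, $X\circledast\omega^{\circledast k-1}=t^{\circledast 2k+1}$ with $A\wedge U^{\wedge 2k}$ in degree $2k(n-1)$, $Y\circledast\omega^{\circledast k-1}=u\circledast t^{\circledast 2k}$ with $U^{\wedge 2k-1}$ in degree $2k(n-1)+1$, and $Z\circledast\omega^{\circledast k-1}=u\circledast t^{\circledast 2k+1}$ with $U^{\wedge 2k}$ in degree $(2k+1)(n-1)+1$. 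Hence each of the four pairings is automatically $\pm1$, and the content of the lemma is that the generators can be chosen so that all four families of signs are $+1$ at once.

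For the signs I would invoke the Morse--Bott picture behind both \cite{CJY} and \cite[\S15]{GorHin}. Above the constant loops (which contribute $H_{*}(S^{n})$ in degrees $0$ and $n$), the critical manifolds of the energy on $\Lambda S^{n}$ are the spaces $\mathcal{G}_{k}$ of $k$-fold iterated great circles, each diffeomorphic to the Stiefel manifold $V_{2}(\mathbb{R}^{n+1})$ of dimension $2n-1$ and of Morse index $(2k-1)(n-1)$; since $n$ is odd the associated spectral sequence degenerates, so the four monomials $A\wedge U^{\wedge 2k-1},A\wedge U^{\wedge 2k},U^{\wedge 2k-1},U^{\wedge 2k}$ are exactly the images of the four generators of $H_{*}(V_{2}(\mathbb{R}^{n+1}))$, in degrees $0,n-1,n,2n-1$, under the Thom isomorphism of the negative normal bundle $\nu_{k}^{-}$ --- the classes ``born at'' $\mathcal{G}_{k}$. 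Dually, $\omega$ is the Thom class of $\nu_{1}^{-}$, so by the Morse-theoretic inequality $Cr(x\circledast y)\geq Cr(x)+Cr(y)$ of \cite{GorHin} (an equality for spheres) $\omega^{\circledast k}$ is supported at the level of $\mathcal{G}_{k}$ with leading term the Thom class of $\nu_{k}^{-}$, and likewise $X\circledast\omega^{\circledast k-1},Y\circledast\omega^{\circledast k-1},Z\circledast\omega^{\circledast k-1}$ are supported there with leading terms the Thom class times the remaining three generators of $H^{*}(V_{2}(\mathbb{R}^{n+1}))$. By compatibility of the two Thom isomorphisms, the Kronecker pairing of a class born at $\mathcal{G}_{k}$ with a cohomology class supported there reduces to the perfect pairing on $V_{2}(\mathbb{R}^{n+1})\cong\mathcal{G}_{k}$; so after fixing a generator of $H_{2n-1}(V_{2}(\mathbb{R}^{n+1}))$ and compatible orientations of the $\nu_{k}^{-}$ one gets generators $A,U,\omega,X,Y,Z$ realizing all four families, the $k=1$ cases $\langle\omega,A\wedge U\rangle=\langle X,A\wedge U^{\wedge 2}\rangle=\langle Y,U\rangle=\langle Z,U^{\wedge 2}\rangle=1$ being the normalisations of \cite[\S15]{GorHin}.

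The delicate point, and where I expect the real work, is to check that the sign so obtained does not drift with $k$ --- that $\omega^{\circledast k}$ restricts on $\mathcal{G}_{k}$ to the Poincar\'e dual of the restriction of $A\wedge U^{\wedge 2k-1}$ with a sign independent of $k$, and similarly for the three other families. The route I would take that visibly avoids circularity with Proposition~\ref{prop:copspheres} is a two-term recursion: writing $\omega^{\circledast k+1}=\omega\circledast\omega^{\circledast k}$ and using the duality $\langle x\circledast y,W\rangle=(-1)^{|y|(n-1)}\langle x\times y,\widehat{\vee}W\rangle$ of Theorem~\ref{thm:extendedcoproduct}(B), the pairing $\langle\omega^{\circledast k+1},A\wedge U^{\wedge 2k+1}\rangle$ equals, up to an explicit sign, the coefficient of $(A\wedge U)\times(A\wedge U^{\wedge 2k-1})$ in $\widehat{\vee}(A\wedge U^{\wedge 2k+1})$ times $\langle\omega,A\wedge U\rangle\,\langle\omega^{\circledast k},A\wedge U^{\wedge 2k-1}\rangle$; that coefficient is $1$ uniformly in $k$ by the transversality description of $\widehat{\vee}$ in Proposition~\ref{prop:compV} applied to a great-circle representative of $A\wedge U^{\wedge 2k+1}$. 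The analogous recursions handle the families of $X$, $Y$ and $Z$, and together with the $k=1$ base cases --- checked directly, compatibly with the relations $X\circledast X=\omega^{\circledast 3}$ and $X\circledast Y=\omega\circledast Z$ --- this propagates the normalisation to all $k$ and proves the lemma.
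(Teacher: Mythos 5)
Your overall strategy --- use freeness of $H_*(\Lambda S^n)$ to make the Kronecker pairing perfect and reduce each identity to a sign $\pm 1$, then pin down all four signs by a computation of $\widehat{\vee}$ on explicit representatives via Proposition~\ref{prop:compV} --- is sound, and for the two families involving $\omega$ and $X$ it is essentially the paper's proof restated as a recursion. The paper likewise fixes $\langle\omega,A\wedge U\rangle=1$ and $\langle X,A\wedge U^{\wedge 2}\rangle=1$ as normalizations (after pinning the orientation of $A\wedge U$ by an explicit covering-degree convention rather than your Morse--Bott unraveling, which at that stage buys nothing beyond what UCT already gives), and then computes $\vee^k(A\wedge U^{\wedge j})$ directly from Proposition~\ref{prop:compV} applied to the Pontrjagin-product representatives $A\wedge U^{\wedge 2k-1}=(A\wedge U)\cdot(A\wedge U^{\wedge 2})^{\cdot(k-1)}$ in $H_*(\Omega S^n)$; your one-step recursion $\omega^{\oast (k+1)}=\omega\oast\omega^{\oast k}$ is the same calculation taken one level at a time.

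Where you and the paper genuinely diverge is the $Y$ and $Z$ families, and this is where your argument has a real gap. The paper defines $Y:=\iota^!\iota^*\omega$ and $Z:=\iota^!\iota^*X$ for $\iota\colon\Omega S^n\to\Lambda S^n$, so the $Y,Z$ identities collapse in one line to the $\omega,X$ identities via the Gysin relations
\begin{equation*}
\bigl\langle \iota^{!}\bigl(\iota^{\ast}\omega\circledast_{\Omega}\iota^{\ast}\omega^{\circledast i}\bigr),\,W\bigr\rangle
=\bigl\langle\omega^{\circledast(i+1)},\,\iota_{\ast}\iota_{!}W\bigr\rangle
\qquad\text{and}\qquad
\iota_{\ast}\iota_{!}\,U^{\wedge j}=A\wedge U^{\wedge j},
\end{equation*}
with no further geometry needed. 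By contrast your ``analogous recursion'' requires applying Proposition~\ref{prop:compV} to a chain representative of $U^{\wedge (2k+1)}$. That is a genuinely harder request than for $A\wedge U^{\wedge j}$: the latter is naturally a Pontrjagin product of based great circles, whereas a candidate for $U^{\wedge(2k+1)}$ (e.g.\ a family of circle concatenations with one basepoint left free) is a $(2n-1)$-dimensional cycle whose transversality hypotheses you have not checked. You also characterize $Y,Z$ only implicitly (``Thom class times the remaining generators of $H^*(V_2(\mathbb{R}^{n+1}))$''), which would then force a separate verification of the relations $X\oast X=\omega^{\oast 3}$ and $X\oast Y=\omega\oast Z$ that the paper's $\iota^!\iota^*$ definition makes automatic. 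So: your scheme is very plausibly correct, but as written the $Y,Z$ step is not closed, and the paper's Gysin-transfer route is both shorter and avoids this issue entirely.
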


The case $k=1$ in the lemma can be taken as our definition of the cohomology
generators, once the homology generators are fixed. For $k\ge 2$ this is a
signed version of a general principle established in \cite[Sec 13,14]{GorHin}
saying that ``to go up one level in cohomology on a manifold with all
geodesics closed, you multiply by $\omega$'', and ``to go up one level in
homology, you multiply by $\mathbf{\Theta}$", $\mathbf{\Theta}$ being $%
U^{\wedge 2}$ in the case of spheres. See Remark~\ref{rem:levelsforspheres}
for a graphic representation of this phenomenon.

\begin{proof}[Proof of Lemma~\protect\ref{lem:sphereskron}]
We choose the orientation of $X$ and $\omega$ in such a way that 
\begin{equation*}
\langle \omega,A\wedge U\rangle=1 \ \ \text{ and }\ \ \langle X,A\wedge
U^{\wedge 2}\rangle=1. 
\end{equation*}
Note that this is compatible with the equation $X\oast_{\Thom} X=\omega^{\oast_{\Thom}3}$ \cite[(15.5.1)]{GorHin} 
as, using Proposition~\ref{prop:spheres1}, we have that $\vee_{\Thom}(A\wedge U^{\wedge 5})$ has exactly one term $A\wedge U^{\wedge 2}\ot A\wedge U^{\wedge 2}$, while
$\vee_{\Thom}^2(A\wedge U^{\wedge 5})$ has exactly one term $A\wedge U\ot A\wedge U\ot A\wedge U$.  

It follows inductively that the first two equations in the statement hold  as $\vee_{\Thom}(A\wedge U^{\wedge 2k-1})$ has exactly one term $A\wedge U\ot (A\wedge U^{\wedge 2k-3})$, while
the coproduct $\vee_{\Thom}(A\wedge U^{\wedge 2k-1})$ has exactly one term $(A\wedge U^{\wedge 2})\ot (A\wedge U^{\wedge 2k-3})$.

We then define $Y=\iota^!\iota^*\omega$ and $%
Z=\iota^!\iota^*X$ \cite[(15.6.2)]{GorHin} for $\iota\colon \Omega S^n\to
\Lambda S^n$ the inclusion.
It then follows that 
$$\begin{array}{rll}
\left\langle Y\circledast_{\Thom} \omega ^{\circledast_{\Thom} i},U^{\wedge
(2i+1)}\right\rangle &=\left\langle \iota^{!}\iota^{\ast }\omega
                         \circledast_{\Thom} \omega ^{\circledast_{\Thom} i},U^{\wedge (2i+1)}\right\rangle &\\
    &=\left\langle \iota^{!}(\iota^{\ast }\omega \circledast _{\Omega }\iota^{\ast
}\omega ^{\circledast_{\Thom} i}),U^{\wedge (2i+1)}\right\rangle & \\
&=\left\langle \omega ^{\circledast_{\Thom} i+1},\iota_{\ast }\iota_{!}U^{\wedge
(2i+1)}\right\rangle &=\left\langle \omega ^{\circledast_{\Thom} i+1},A\wedge U^{\wedge
(2i+1)}\right\rangle = 1
\end{array}$$
and 
$$\begin{array}{rll}
\left\langle Z\circledast_{\Thom} \omega ^{\circledast_{\Thom} i},U^{\wedge
(2i+2)}\right\rangle &=\left\langle \iota^{!}\iota^{\ast }X\circledast_{\Thom}
\omega ^{\circledast_{\Thom} i},U^{\wedge (2i+2)}\right\rangle &\\
&=\left\langle \iota^{!}(\iota^{\ast }X\circledast _{\Omega }i^{\ast
}\omega ^{\circledast_{\Thom} i}),U^{\wedge (2i+2)}\right\rangle &\\
&=\left\langle X\circledast_{\Thom} \omega ^{\circledast_{\Thom} i},i_{\ast }i_{!}U^{\wedge
(2i+2)}\right\rangle 
&=\left\langle X\circledast_{\Thom} \omega ^{\circledast_{\Thom} i},A\wedge U^{\wedge
(2i+2)}\right\rangle=1
\end{array}$$
where we used the Gysin formulas given in \cite[p151]{GorHin}.
\end{proof}

\begin{rem}
\label{rem:levelsforspheres} For the convenience of the reader, we recall
from \cite{GorHin} that the homology and cohomology of $\Lambda=\Lambda S^n$
(with $n\ge 3$ odd) fit together as follows: \newline

\smallskip

\noindent {\small $%
\begin{array}{lccccccccccc}
\text{degree} & 0 & n\!-\!1 & n & 2n\!-\!2 & 2n\!-\!1 & 3n\!-\!3 & 3n\!-\!2
& 4n\!-\!4 & 4n\!-\!3 & 5n\!-\!5 & 5n\!-\!4 \\ 
&  &  &  &  &  &  &  &  &  &  &  \\ 
H_{\ast }(M) & A &  & U^{0} &  &  &  &  &  &  &  &  \\ \cline{3-8}
H_{\ast }(\Lambda ^{\leq 2\pi },\Lambda ^{<2\pi }) &  & \multicolumn{1}{|c}{
AU} &  & AU^{2} & U &  & \multicolumn{1}{c|}{U^{2}} &  &  &  &  \\ 
\cline{3-12}
H_{\ast }(\Lambda ^{\leq 4\pi },\Lambda ^{<4\pi }) &  &  &  &  &  & 
\multicolumn{1}{|c}{AU^{3}} &  & AU^{4} & U^{3} &  & \multicolumn{1}{c|}{
U^4\!\!} \\ \cline{7-12}
H_{\ast }(\Lambda ^{\leq 6\pi },\Lambda ^{<6\pi }) &  &  &  &  &  &  &  &  & 
& \multicolumn{1}{|c}{AU^{5}} &  \\ \cline{11-12}
&  &  &  &  &  &  &  &  &  &  &  \\ \cline{3-8}
H^{\ast }(\Lambda ^{\leq 2\pi },\Lambda ^{<2\pi }) &  & \multicolumn{1}{|c}{
\omega=t^2} &  & X=t^3 & Y=ut^2 &  & \multicolumn{1}{c|}{\!Z=ut^3\!} &  &  & 
&  \\ \cline{3-12}
H^{\ast }(\Lambda ^{\leq 4\pi },\Lambda ^{<4\pi }) &  &  &  &  &  & 
\multicolumn{1}{|c}{\omega^{2}} &  & \omega X & \omega Y &  & 
\multicolumn{1}{c|}{\omega Z\!\!} \\ \cline{7-12}
H^{\ast }(\Lambda ^{\leq 6\pi },\Lambda ^{<6\pi }) &  &  &  &  &  &  &  &  & 
& \multicolumn{1}{|c}{\omega ^{3}} &  \\ \cline{11-12}
\end{array}%
$}

\smallskip

\noindent In each degree $k$ we have either that both $H_k(\Lambda S^n)$ and 
$H^k(\Lambda S^n)$ are 0, or both are equal to $\mathbb{Z}$. In the latter
case, we wrote a generator. To save space we have left out the products $%
\wedge$ and $\widehat{\oast}_{\Thom}=\oast_{\Thom}$ from the notation in the table; so for
example $AU^{2}:=A\wedge U^{\wedge 2}$ in the table.
The boxes in the table correspond to the homology of the unit
tangent bundle, shifted up by successive multiplication with $U^2$ in
homology and $\omega$ in cohomology.
\end{rem}

\begin{proof}[Proof of Proposition~\protect\ref{prop:copspheres}] 
The first statement follows from Proposition~\ref{prop:spheres1}, so we are left to check the second. 
 We have  $Y\U\circledast\omega ^{\U\circledast k-1}=u t^{2k}$ and $%
Z\U\circledast \omega ^{\U\circledast k-1}=ut^{2k+1}$, where we write $\U \oast = \circledast_{\Thom} $ for readability. So Lemma~\ref{lem:sphereskron} shows that $U^{\wedge k}$ is dual to $ut^{ k+1}$. This last product can be decomposed as 
\begin{equation*}
(u\U\oast t^{\U\oast j+1})\U\oast t^{\oast k-j}=t^{\U\oast k-j}\U\oast (u\U\oast t^{\U\oast j+1})
\end{equation*}
for any $1\le j\le k-2$ as $\U\oast$ is graded commutative after shifting the degrees by $n$ because $n$ is odd, and $u\U\oast t^{\U\oast j+1}$ has odd degree $(j+1)(n-1)+1$, and hence even $n$-shifted degree. 
Hence 
\begin{equation*}
\widehat{\vee} U^{\wedge k}=\Sigma _{j=1}^{k-2}\pm (A\wedge U^{\wedge
j}\times U^{\wedge (k-1-j)}) + \pm (U^{\wedge j}\times A\wedge U^{\wedge
(k-1-j)}).
\end{equation*}
Left is to work out the signs for each term. We have already seen that there
are no signs coming from the decomposition as products. There two potential
additional two signs coming from the duality:

\begin{enumerate}[(i)]

\item $\langle c\times d, \vee E\rangle = (-1)^{\deg(d)(n-1)}\langle c\oast %
d , E\rangle$,

\item $\langle c\times d , C\times D\rangle =(-1)^{\deg(D)\deg(c)}\langle
c,C\rangle \langle d,D\rangle$.
\end{enumerate}

Now in the case at hand, we have assumed that $n$ is odd, so the first
equation does not give a sign. Also, in all cases either $C$ or $D$ will be
a term of the form $A\wedge U^{\wedge j}$, which is of degree $j(n-1)$, that
is always even dimensional when $n$ is odd. So the second equation will
likewise not give any sign.
\end{proof}

\begin{rem}[Higher genus non-trivial operations] 
\label{rem:genus1} If we represent the product and coproduct by pairs of
pants, the composition of the coproduct followed by the product is
represented by a genus one surface with two boundary components. We have
shown above that this genus one operation is trivial. If one inserts the operation $(1\times \Delta)$ in between
the product and the coproduct, for $\Delta$ the degree 1 operation coming
from the circle action, the resulting operation is non-trivial: for $n\ge 3$ odd, the operation 
\begin{equation*}
t:=\wedge\circ (1\times \Delta) \circ \widehat{\vee}\colon H_*(\Lambda
S^n)\longrightarrow H_{*-2n+2}(\Lambda S^n)
\end{equation*}
is non-trivial on the classes $A\wedge U^{\wedge k}$ with $k\ge 3$. Indeed, from Proposition~\ref%
{prop:copspheres}, we have that 
\begin{equation*}
\widehat{\vee} (A\wedge U^{\wedge k})=\Sigma _{j=1}^{k-2}(A\wedge U^{\wedge
j})\times (A\wedge U^{\wedge (k-1-j)}).
\end{equation*}
From \cite[Lem 6.2]{HinRad}, we know moreover that 
\begin{equation*}
\Delta(A\wedge U^{\wedge k})=(-1)^kk\,U^{\wedge (k-1)}
\end{equation*}
(taking into account our sign convention at the beginning of Section~\ref{sec:compspheres1} compared to \cite{HinRad}, see the proof of the lemma). Putting these two computations
together we get that 
\begin{align*}
\wedge\circ (1\times \Delta) \circ \widehat{\vee} (A\wedge U^{\wedge
k})&=\Sigma_{j=1}^{k-2}(-1)^{k-1-j}(k-1-j)A\wedge U^{\wedge (k-2)} \\
&=(-1)^k\lfloor\frac{k-1}{2}\rfloor A\wedge U^{\wedge (k-2)}
\end{align*}
which in particular is non-zero. 

The operation $t$ would still be associated to a genus 1 surface, and
composing $t$ with itself $g$ times can be associated to a genus $g$
surface. Note also that the operation $t^g$ is likewise non-trivial on $%
A\wedge U^{\wedge k}$ whenever $k\ge 2g+1$. Similar computations in
algebraic models of the loop space can be found in \cite[Prop 4.1]{Wah16}
(see also \cite{BoeEga}), and in fact, these papers indicate that there
should be many such non-trivial operations combining the product, coproduct
and $\Delta$ operations, associated to classes in the homology of the
Harmonic compactification of the moduli space of Riemann surfaces. (In this compactification, the class $t$ corresponds to a 2-parameter family of genus 1 surfaces made out of two pairs of pants glued along two circles $C_1$ and $C_2$, with one parameter changing the gluing along $C_2$ by a rotation and the other parameter varying the relative size of $C_1$ and $C_2$ while fixing the sum of the lengths to be 1 at all times.)  These more general operations
will be studied in our following paper \cite{HinWah2}.
\end{rem}

\begin{rem}[Failure of the Frobenius formula] 
\label{rem:Frob} It has been suggested that there should be a formula of the
form%
\begin{equation}  \label{equ:66}
\vee (A\wedge B)=(\vee A)\wedge B+A\wedge (\vee B)
\end{equation}
i.e. 
\begin{equation*}
\vee \circ \wedge \overset{?}{=}(1\times \wedge )\circ \vee \times 1+(\wedge
\times 1)\circ 1\times \vee
\end{equation*}
\cite[p 349]{Sul04} relating the coproduct and the product. It is very
difficult to make sense of this formula in relative homology. It is natural
to ask if the product and \textit{lifted} coproduct satisfy 
\begin{equation}  \label{equ:Sast}
\widehat{\vee }\circ \wedge =(1\times \widehat{\wedge })\circ \vee \times 1+(%
\widehat{\wedge }\times 1)\circ 1\times \vee
\end{equation}

Our choice of lift $\widehat{\vee }$ does not satisfy the formula (\ref%
{equ:Sast}): one can check by a computation similar to that in Remark~\ref%
{rem:genus1} for odd spheres. If the formula (\ref{equ:Sast}) were true we
would have for example 
\begin{equation}  \label{equ:Sastast}
\widehat{\vee }(\mathbf{\Theta} \wedge \mathbf{\Theta} )=\widehat{\vee }(%
\mathbf{\Theta} )\wedge \mathbf{\Theta} +\mathbf{\Theta} \wedge \widehat{%
\vee }(\mathbf{\Theta} )
\end{equation}
where $\mathbf{\Theta} :=U^{\wedge 2}\in H_{3n-2}(\Lambda S^{n})$. Now the
right hand side is $0$ since $\widehat{\vee }(\mathbf{\Theta} )=0$ by
Theorem~\ref{thm:trivial}; indeed $\mathbf{\Theta} $ is represented by the
space of circles and has support in the simple and constant loops. But the
left hand side of (\ref{equ:Sastast}) must \textit{not} be $0$ since $%
\mathbf{\Theta} \wedge \mathbf{\Theta} $ is not in the image of $\Lambda
^{\le L }$ and thus $\mathbf{\Theta} \wedge \mathbf{\Theta} $ is dual to a
decomposable cohomology class: The coproduct $\widehat{\vee }(\mathbf{\Theta%
} \wedge \mathbf{\Theta} $ has one nonzero term for each way of writing the
dual $u\circledast t^{\circledast 5}$ of $\mathbf{\Theta} \wedge \mathbf{%
\Theta} $ as a product. The conclusion is that our lifted coproduct fails
to satisfy (\ref{equ:Sastast}) for fundamental reasons.

More generally, using direct computation and testing $A=B=U^{\wedge 2k}=%
\mathbf{\Theta} ^{\wedge k}$ in (\ref{equ:66}), we did not find equality,
but rather that the two sides differ by 
\begin{eqnarray*}
&&\widehat{\vee }(U^{\wedge 2k}\wedge U^{\wedge 2k})-\left( (\widehat{\vee }%
U^{\wedge 2k})\wedge U^{\wedge 2k}+U^{\wedge 2k}\wedge (\widehat{\vee }%
U^{\wedge 2k})\right) \\
&=&-A\wedge U^{\wedge (2k-1)}\times U^{\wedge 2k}-A\wedge U^{\wedge 2k}\times
U^{\wedge (2k-1)}+U^{\wedge (2k-1)}\times A\wedge U^{\wedge 2k}+U^{\wedge
2k}\times A\wedge U^{\wedge (2k-1)}
\end{eqnarray*}%
yielding four nonzero terms which do not add to $0$. It is also notable that
the four terms are ``in the middle". From a naive geometric perspective, (%
\ref{equ:66}) is not to be expected to be true: it says that, ``the
self-intersections in $A\wedge B$, (i.e.~$\vee (A\wedge B)$) come from the
self-intersections in $A$ (i.e.~$(\vee A)\wedge B$) and from the
self-intersections in $B$ (i.e.~$A\wedge (\vee B)$)''. But what about the
self-intersections in $A\wedge B$ of the form $\alpha \ast \beta $ where $%
\alpha \in \func{Im}A$ and $\beta \in \func{Im}B$ are both simple? The
evidence from the finite dimensional approximation is that the difference
between the left- and right-hand side in equation (\ref{equ:66}) picks up
the second-order intersections of $A$ and $B$. 
\end{rem}

\section{Application to spheres and projective spaces}

\label{sec:int}

The iterated coproduct ${\vee}^k$ and its lifted version $\widehat\vee^k$,  via Theorems~\ref%
{thm:coproduct2} and \ref{thm:extendedcoproduct}, gives us a way to study the following geometric
invariant of homology classes in the loop space:

\begin{Def}
Given $[X]\in H_{\ast }(\Lambda )$, we define the \textit{intersection
multiplicity} of $[X]$ as follows: 
\begin{equation*}
\func{int}([X]):=\inf_{\begin{subarray}{c}A\in C_*(\La)\\
[A]=[X]\end{subarray}}\sup_{\substack{ \gamma \in \func{Im}A  \\ \ell(\gamma )>0  \\ p\in M}}\#(\gamma ^{-1}\{p\}).
\end{equation*}
and the \textit{basepoint intersection multiplicity} of $[X]$ by 
\begin{equation*}
\func{int}_{0}([X]):=\inf_{\substack{ A\in C_*(\Lambda)  \\ [A]=[X]}}\sup 
_{\substack{ \gamma \in \func{Im}A  \\ \ell(\gamma )>0}}\#(\gamma
^{-1}\{\gamma (0)\}).
\end{equation*}
\end{Def}

Note that we are not counting the number of pairs $(s,t)$ with $\gamma
(s)=\gamma (t)$, but rather for a fixed point $p$ how many times the loop $%
\gamma $ goes through $p$. A representative $A$ for $[X]$ consisting
entirely of piecewise geodesic loops, each with at most $N$ pieces and each
piece of length $\leq \rho /2$, and parametrized proportional to arc
length, will have finite intersection multiplicities, since a given point $p$
intersects each piece at most once. As such representatives always exist,
the intersection multiplicity is necessarily a finite number. Also, by
definition and by Theorem~\ref{thm:extendedcoproduct}
\begin{equation}  \label{equast}
\func{int}([X])\leq k\ \ \Longrightarrow\ \ \func{int}_{0}([X])\leq k\ \
\Longrightarrow\ \ \widehat{\vee }^{k}[X]=0.
\end{equation}

The following result shows that the reverse implications hold in the case of
spheres and projective spaces.

\begin{thm}
\label{thm:int}If $M=S^{n}$, $\mathbb{R}P^{n}$, $\mathbb{C}P^{n}$, $\mathbb{H}P^{n}$, or $\mathbb{O}P^{2}$, then for any $[X]\in H_{\ast }(\Lambda )$, and $k\geq 1$, 
\begin{equation}  \label{equastast}
\func{int}([X])\leq k\ \ \Longleftrightarrow\ \ \func{int}_{0}([X])\leq k\ \
\Longleftrightarrow\ \ \widehat{\vee}^{k}[X]=0.
\end{equation}
\end{thm}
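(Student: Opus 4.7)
The forward chain $\func{int}[X] \leq k \Rightarrow \func{int}_0[X] \leq k \Rightarrow \widehat{\vee}^k[X] = 0$ is exactly (\ref{equast}), so the content of the theorem is the reverse direction: for these four manifolds, $\widehat{\vee}^k[X] = 0$ implies $\func{int}[X] \leq k$.

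The plan is to exploit that $S^n, \mathbb{R}P^n, \mathbb{C}P^n, \mathbb{H}P^n$ are compact rank-one symmetric spaces: in their canonical metric all geodesics are closed of the same length, the energy functional on $\Lambda M$ is perfect Morse--Bott, and the critical submanifold at energy level $(m\pi)^2$ is the space of $m$-fold iterates of prime closed geodesics. This yields a ``level filtration'' $F_m H_*(\Lambda M) := \mathrm{im}\bigl(H_*(\Lambda^{\leq m\pi}) \to H_*(\Lambda M)\bigr)$. Any class in $F_m$ admits a representative supported on loops that are concatenations of at most $m$ prime closed geodesics and thus have $\leq m$-fold intersections, so $[X] \in F_k H_*(\Lambda M)$ already implies $\func{int}[X] \leq k$. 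The task therefore reduces to $\ker \widehat{\vee}^k \subset F_k H_*(\Lambda M)$.

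For this I would compute $\widehat{\vee}^k$ on a homology basis level by level, extending Proposition~\ref{prop:copspheres} to the four families. With respect to a generating system such as $\{A \wedge U^{\wedge j}, U^{\wedge j}\}$ for odd spheres and its analogues in the other cases, each application of $\widehat{\vee}$ should lower the level by one and peel off an indecomposable minimal-level factor, exactly as in Proposition~\ref{prop:copspheres}. Consequently $\widehat{\vee}^k$ is expected to be nonzero on every generator of $F_{k+1}/F_k$, producing a $(k+1)$-fold external tensor product of minimal-level classes in $H_*(\Lambda^{k})$. Combined with a downward induction on level and Proposition~\ref{prop:Vk} for iterating the coproduct in homology, this shows that $\widehat{\vee}^k[X] = 0$ forces $[X]\in F_k H_*(\Lambda M)$, yielding the desired bound on $\func{int}[X]$.

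The main obstacle will be the explicit computation of $\widehat{\vee}^k$ on generators for the three projective spaces and for even-dimensional spheres, where Morse--Bott critical submanifolds are of higher dimension than on odd spheres and the cohomology ring structure established in \cite{GorHin} is richer. One must identify a basis compatible with the level filtration, verify via Proposition~\ref{prop:compV} (applied to iterates of prime geodesics) that $\widehat{\vee}^k$ on a level-$(k+1)$ generator produces a nonzero element without accidental cancellations, and ensure the argument survives change of coefficients so that torsion in $H_*(\Lambda M;\mathbb{Z})$ does not obstruct the statement for arbitrary coefficient rings.
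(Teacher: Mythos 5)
Your forward implications and the reduction to showing $\ker\widehat{\vee}^{k}\subset F_k$ (classes supported on $\Lambda^{\leq kL}$) are exactly what the paper does, and your observation that $F_k$ classes live on $k$-fold concatenations of circles $\mathbf{\Theta}^{\wedge k}$ and hence have $\func{int}\leq k$ is also the paper's final step, so the geometry is in the right place. The genuine gap is in how you propose to prove $\ker\widehat{\vee}^{k}\subset F_k$. You want to compute $\widehat{\vee}^{k}$ on a level-graded basis for all four families and argue ``downward induction on level'' that the coproduct never vanishes above level $k$ without accidental cancellation. You acknowledge yourself that this computation is not done (``the main obstacle''), and it would be far from routine: Proposition~\ref{prop:copspheres} covers only odd spheres, the projective spaces and even spheres have richer ring structure and Morse--Bott geometry, and a basis-by-basis argument with field coefficients still needs extra work to reach arbitrary coefficient rings. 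As stated the proposal is a plan, not a proof.

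The paper avoids all of this by dualizing. Suppose $[X]$ is \emph{not} supported on $\Lambda^{\leq kL}$. Pick a field so that $[X]$ remains nonzero in $H_*(\Lambda,\Lambda^{\leq kL})$, and a cochain $[x]\in H^*(\Lambda,\Lambda^{\leq kL})$ with $\langle[x],[X]\rangle\neq 0$. The decisive input, which your proposal does not invoke, is \cite[Thm~14.2, Cor~14.8]{GorHin}: for a manifold all of whose geodesics are closed of minimal period $L$, every class in $H^*(\Lambda,\Lambda^{\leq kL})$ is a sum of $\widehat{\circledast}$-products $[x_1]\widehat{\circledast}\cdots\widehat{\circledast}[x_j]$ with $j>k$ and $[x_i]\in H^*(\Lambda^{\leq L},M)$. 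By Proposition~\ref{prop:Vk},
\begin{equation*}
\langle [x_1]\widehat{\circledast}\cdots\widehat{\circledast}[x_j],[X]\rangle
=\pm\langle [x_1]\times\cdots\times[x_j],\widehat{\vee}^{\,j}[X]\rangle ,
\end{equation*}
and $\widehat{\vee}^{\,j}[X]=0$ for $j>k$ whenever $\widehat{\vee}^k[X]=0$, giving a contradiction with no generator computations at all; passing to a field also takes care of arbitrary coefficients cleanly. So the missing idea is not the geometric filtration (which you have) but the use of the Goresky--Hingston decomposability theorem for cohomology above level $kL$ together with the $\widehat{\vee}/\widehat{\circledast}$ duality to replace explicit coproduct computations by a one-line pairing argument uniform across all four families.
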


Thus for these manifolds the vanishing of the iterated coproduct is a
perfect predictor of the intersection multiplicity of each homology class $%
[X]$, except that it cannot be used to distinguish between intersection
multiplicity $0$ and $1$: we have that $\widehat{\vee }^{1}=0$ on both $%
H_{\ast }(M)$ (where $\func{int}=\func{int}_{0}=0$ by definition) and on $%
H_{\ast }(\Lambda ^{\leq L})-H_{\ast }(M)$ (where $\func{int}=\func{int}%
_{0}=1$), for $\Lambda^{\leq L}$ the subspace of loops of energy at most $L^2
$.

\smallskip

We will show below that (\ref{equastast}) follows from the following two
hypotheses:

\begin{enumerate}
\item[(A)] $M$ is a compact Riemannian manifold all of whose geodesics are
closed and of the same minimal period $L$,   and

\item[(B)] $H_{\ast }(\Lambda ^{\leq L})$ is supported on the union of the
simple and the constant loops.
\end{enumerate}

Ultimately the proposition is a topological statement, so (\ref{equastast})
will also follow if $M$ is a compact smooth manifold that carries a metric
satisfying (A) and (B). The manifolds listed in the statement are rather special; they are the compact rank 1 symmetric spaces, see eg., \cite[Chap 3]{Besse}. 

\begin{lem}
Properties (A) and (B) hold for spheres and projective spaces with their
standard metric.
\end{lem}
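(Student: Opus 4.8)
The plan is to verify (A) and (B) in turn.

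\emph{Property (A)} is classical. The manifolds $S^n$, $\mathbb{R}P^n$, $\mathbb{C}P^n$ and $\mathbb{H}P^n$ with their round, resp.\ Fubini--Study, metrics are precisely the compact rank-one symmetric spaces, equivalently the compact two-point homogeneous spaces; for such a space the isometry group acts transitively on the unit tangent bundle, so once one closed geodesic of minimal period $L$ is exhibited, every geodesic is closed of minimal period $L$. Concretely: on the unit sphere the geodesics are great circles ($L=2\pi$); on $\mathbb{R}P^n=S^n/\{\pm1\}$ they are images of great semicircles ($L=\pi$); on $\mathbb{C}P^n$ and $\mathbb{H}P^n$, normalizing so that a totally geodesic $\mathbb{C}P^1$, resp.\ $\mathbb{H}P^1$, is a round sphere of radius $\tfrac12$, every geodesic lies in such a subsphere and is closed with $L=\pi$. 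Rescaling the metrics we may take $L$ the same in all four cases. (See e.g.\ Besse, \emph{Manifolds all of whose geodesics are closed}.)

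\emph{Property (B)} I would obtain from Morse--Bott theory for the energy functional $E$ on $\Lambda=\Lambda M$. For these manifolds $E$ satisfies Condition C and is Morse--Bott, with critical submanifolds the constant loops $M$ (at level $0$, index $0$) and, for $k\ge1$, the manifolds $\Sigma_k$ of $k$--fold iterated closed geodesics (at level $(kL)^2$); on $[0,L^2]$ the only ones are $M$ and $\Sigma:=\Sigma_1$, the space of parametrized prime closed geodesics. Since there is no critical value in $(0,L^2)$ and the negative gradient flow strictly decreases $E$, the sublevel set $\Lambda^{<L}$ deformation retracts onto $M$ inside $\Lambda$, and the Morse--Bott lemma presents $\Lambda^{\le L}$, up to homotopy, as $\Lambda^{<L}$ with the negative disk bundle $D(\nu^-)\to\Sigma$ attached along a map $S(\nu^-)\to\Lambda^{<L}$, where $D(\nu^-)$ retracts onto $\Sigma$. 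Using the long exact sequence of $(\Lambda^{\le L},M)$ (and $\Lambda^{<L}\simeq M$), it suffices to show that every relative class in $H_*(\Lambda^{\le L},M)$ has a relative-cycle representative all of whose loops are simple or constant: granting this, for $z\in H_*(\Lambda^{\le L})$ its image $\bar z\in H_*(\Lambda^{\le L},M)$ has such a representative $c$, and $[\partial c]=\partial_*\bar z=0$ in $H_{*-1}(M)$ because $\bar z$ lifts to $z$; so $\partial c=\partial E_1$ with $E_1\in C_*(M)$, $c-E_1$ is an absolute cycle on the simple and constant loops, $z-[c-E_1]$ comes from $H_*(M)$, and adding a cycle in $M$ to $c-E_1$ gives a representative of $z$ on the simple and constant loops.

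The remaining claim, about relative classes, is where the specific geometry enters and is the main obstacle. For $\mathbb{R}P^n$ it is immediate: a non-contractible loop of energy $\le L^2=\pi^2$ is a shortest loop in its free homotopy class, hence a prime closed geodesic, so the non-contractible part of $\Lambda^{\le\pi}\mathbb{R}P^n$ is exactly $\Sigma$ (a manifold of simple loops), the contractible part retracts onto $M$, and $H_*(\Lambda^{\le\pi}\mathbb{R}P^n)=H_*(M)\oplus H_*(\Sigma)$ is supported on the simple and constant loops. For $S^n$, $\mathbb{C}P^n$ and $\mathbb{H}P^n$ one argues through the negative disk bundle: under the Thom isomorphism $H_*(\Lambda^{\le L},M)\cong\widetilde H_*(\mathrm{Th}(\nu^-))$, a relative class is carried by a cycle in $\Sigma$ thickened in the $\nu^-$ directions, i.e.\ by exponentiating $\nu^-$ along it; the Hessian of $E$ at a prime closed geodesic $\gamma$ is diagonalized by Fourier modes of periodic normal fields (the curvature operator $V\mapsto R(V,\dot\gamma)\dot\gamma$ having constant eigenvalues, $1$ on $S^n$ and $1,4$ on $\mathbb{C}P^n,\mathbb{H}P^n$), and its negative eigenspace is spanned by parallel normal fields; exponentiating a parallel normal field deforms $\gamma$ through nearby embedded geodesic circles (small circles of $S^n$, of a totally geodesic subsphere, or circles in tilted totally geodesic subspheres), all simple up to the conjugate radius, degenerating there onto the constant loops. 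Inserting this family into the argument above gives (B). Making the last point fully precise---the truncation at the conjugate radius, the cut-locus behaviour for $\mathbb{C}P^n$ and $\mathbb{H}P^n$, and the orientation of $\nu^-$---is the delicate part; alternatively one may bypass it and use the known computations of $H_*(\Lambda M)$ in this range (\cite{CJY} for $S^n$ and their analogues for the projective spaces) to exhibit, generator by generator, cycles carried by constant loops together with explicit families of embedded geodesic circles.
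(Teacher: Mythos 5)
Your treatment of (A) is fine and matches the paper's, which simply cites Besse. The issue is (B), and you have correctly identified exactly where the difficulty lies -- but you have left that step open. Your last sentence admits that making the argument precise (truncation at the conjugate radius, cut-locus behaviour for $\mathbb{C}P^n$ and $\mathbb{H}P^n$, orientation of $\nu^-$) is ``the delicate part,'' and the proposed bypass (``generator by generator'') is not carried out. As written, the proof of (B) is incomplete.

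The geometric picture you sketch -- the negative eigenspace of the Hessian of $E$ at a prime closed geodesic is spanned by parallel normal fields, and exponentiating these produces a family of embedded circles degenerating onto constant loops -- is exactly the right one, and it is the same geometry the paper exploits. What the paper does to turn this picture into a proof is introduce the space $\Theta$ of \emph{all} circles, great and small (a circle on $S^n$ being a nonempty intersection with a $2$-plane, and on $\mathbb{K}P^n$ a circle on a standard round $S^q\subset\mathbb{K}P^n$). Every element of $\Theta$ is a simple or constant loop of length $\le L$, so $\Theta\subset\Lambda^{\le L}$. The key point is that $\Theta$ is an embedded submanifold that lies strictly below level $L$ off the critical set of great circles, and whose dimension equals the index of $E$ at a prime closed geodesic plus the dimension of the space of great circles. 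This is precisely the hypothesis of a ``completing manifold'' result, \cite[Thm~D2]{GorHin}, which yields $H_*(\Theta,\Theta^{<L})\xrightarrow{\;\cong\;}H_*(\Lambda,\Lambda^{<L})$. Combined with $H_*(\Theta^{=0})\cong H_*(M)$ and the absence of critical values in $(0,L)$, a long-exact-sequence comparison gives $H_*(\Theta)\cong H_*(\Lambda^{\le L})$, and since $\Theta$ consists of simple and constant loops this is (B). So the paper replaces your delicate local analysis of $\exp(\nu^-)$ near $\Sigma_1$ (where the truncation and cut-locus issues live) by a single global submanifold $\Theta$ satisfying a dimension count, and delegates the Morse-theoretic mechanism to the cited theorem. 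To repair your argument you would either need to carry out that local analysis carefully, or recognize $\Theta$ as the completing manifold and invoke \cite[Thm~D2]{GorHin}.

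One small additional remark: your reduction via the long exact sequence of $(\Lambda^{\le L},M)$ and the claim that ``it suffices to prove the statement for relative classes'' is correct but unnecessary once one has the absolute isomorphism $H_*(\Theta)\to H_*(\Lambda^{\le L})$; the paper obtains the absolute statement directly from the relative one and $H_*(\Theta^{=0})\cong H_*(M)$.
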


\begin{proof}
The standard metric on a sphere or projective space has the property (A) 
\cite[3.31,3.70]{Besse}. We will show that (B) is also satisfied for the standard
metric on these spaces. Recall that a circle on the sphere $S^{n}\subset 
\mathbb{R}^{n+1}$ is a nonempty intersection of $S^{n}$ with a $2$-plane,
considered here as parametrized injectively and proportional to arc length
on $S^{1}$. The prime closed geodesics are intersections with $2$-planes
through the origin, and have length $L=2\pi$. 

If $M=\mathbb{K}P^{n}$ is a projective space,  with $\mathbb{K}=\mathbb{R}, \mathbb{C}$, $\mathbb{H}$ or $\mathbb{O}$, with the standard metric, then every
prime geodesic is closed and of length $L=2\pi$, and lies in a unique totally geodesic
projective line $\mathbb{K}P^1$ of constant curvature 1, where  $\mathbb{K}P^1\cong S^q$ of real dimension $q=1, 2, 4$, or $8$. (See eg.~\cite[Thm 5.2.1]{Kli78}.) 
A circle on $M$ is by definition a circle on such a projective line.

Define
\begin{equation*}
\Theta :=\{\text{circles}\}\subset \Lambda ^{\leq L }
\end{equation*}
for $L$ the minimal period of geodesics.
The simple closed geodesics on $M$ are a submanifold  $\Si\subset \Theta$, and 
they constitute a Morse-Bott nondegenerate submanifold for the
energy function on $\Lambda$ \cite{Ziller,Morse}. Each circle is simple or
trivial and has length $\leq L $, so it is enough to show that $H_{\ast
}(\Lambda ^{\leq L })$ is supported on $\Theta$. Because $\Theta$ is an
embedded submanifold, lying below level $L $ except along the critical
submanifold of great circles, and of dimension equal to the sum of the index
($n-1$ for the spheres, resp.~$q-1$ for the projective spaces, see \cite[p11]{Ziller}) and the dimension of the space $\Si$ of simple closed geodesics,
   %    ($2n-1$, resp.~$2q-1$ \note{2qn-1?? except 31 for Cayley plane?}),
we have an isomorphism, for any coefficients, 
\begin{equation*}
H_{\ast }(\Theta ,\Theta ^{<L })\overset{\cong}{\longrightarrow}H_{\ast
}(\Lambda^{\le L} ,\Lambda ^{<L })
\end{equation*}%
by \cite[Thm D2]{GorHin} with $V=\Theta$ and $X=\La$, using that $\La^{<L}\cup \Si=\La^{\le L}$. We also have 
\begin{equation*}
H_{\ast }(\Theta ^{=0})\overset{\cong}{\longrightarrow}H_{\ast }(M)
\end{equation*}%
Putting these together, using the fact there are no critical points with
length in $(0,L)$, we see that the inclusion of $\Theta $ in $\Lambda $
induces an isomorphism 
\begin{equation*}
H_{\ast }(\Theta )\overset{\cong}{\longrightarrow }H_{\ast }(\Lambda ^{\leq
L })
\end{equation*}
which gives (B).
\end{proof}

\begin{proof}[Proof of Theorem~\protect\ref{thm:int}]
Let $M$ be a sphere or projective space. By the lemma we may assume that $M$
satisfies properties (A) and (B). We will derive (\ref{equastast}) from
these two properties. By Theorem~\ref{thm:extendedcoproduct}, we only need
to check the implications 
\begin{equation*}
\widehat{\vee}^k[X]=0 \ \ \Longrightarrow \ \ \func{int}_{0}([X])\leq k \ \ 
\text{and} \ \ \func{int}([X])\leq k.
\end{equation*}
So assume $k\geq 1$ and $\widehat{\vee }^{k}[X]=0.$ We claim that $[X]$ is
supported on $\Lambda ^{\leq kL }$. Assume otherwise; then the image of $[X]$
in $H_{\ast }(\Lambda ,\Lambda ^{\leq kL })$ is nonzero. Let $\FF$ be a field
so that the image of $[X]$ in $H_{\ast }(\Lambda ,\Lambda ^{\leq kL };\FF)$ is
nonzero, and let $[x]\in H^{\ast }(\Lambda ,\Lambda ^{\leq kL };\FF)$ be so
that the Kronecker product satisfies 
\begin{equation*}
\langle [x],[X]\rangle \neq 0.
\end{equation*}
By \cite[Thm 14.2, Cor 14.8]{GorHin}, the class $[x]$ is a finite sum of
terms of the form $[x_{1}]\circledast \dots\circledast [x_{j}]=[x_{1}]\widehat{\circledast }\cdots \widehat{\circledast }[x_{j}]$,
where $[x_{i}]\in H^{\ast }(\Lambda ^{\leq L },M;\FF)$ and where $j>k$. But then for
some term we have

\begin{equation*}
0\neq \langle [x_{1}]\widehat{\circledast }\cdots\widehat{\circledast }%
[x_{j}],[X]\rangle=\langle [x_{1}]\otimes \dots\otimes [x_{j}],\widehat{\vee 
}^{j-1}[X]\rangle
\end{equation*}%
which is a contradiction since $j>k$ and $\widehat{\vee }^{k}[X]=0$. So $[X]$
is supported on $\Lambda ^{\leq kL }$.

By \cite[Prop 5.3, Thm 13.4]{GorHin}, the Chas Sullivan ring is generated by 
$H_{\ast }(\Lambda ^{\leq L })$. Moreover, $H_{\ast }(\Lambda ^{\leq kL })$, 
$k>1$, is supported on 
\begin{equation*}
\mathbf{\Theta}^{\wedge k}=\{\gamma _{1}\star \gamma _{2}\star \cdots\star
\gamma _{k} \ |\ \gamma _{i}\in \Theta \text{ and \ }\gamma
_{1}(0)=\dots=\gamma _{k}(0)\}.
\end{equation*}
It follows that $X$ has a representative supported in $\mathbf{\Theta
^{\wedge k}}$, and thus%
\begin{equation*}
\func{int}(X)\leq \func{int}(\mathbf{\Theta}^{\wedge k}):=\sup_{\substack{ 
_{\substack{ \gamma \in \mathbf{\Theta}^{\wedge k}  \\ \ell (\gamma
)>0}}  \\ p\in M}}\#(\gamma ^{-1}\{p\}).
\end{equation*}
The reader can check that if $\gamma \in \mathbf{\Theta}^{\wedge k}$, and $%
\gamma $ is not constant, then $\gamma $ is the $\star$-concatenation of $j$
nontrivial circles, for some $j$ with $1\leq j\leq k$, and thus that $\gamma
^{-1}\{p\}$ consists of at most $k$ points, so $\func{int}(\mathbf{\Theta}
^{\wedge k})=k$. As the same holds for $\func{int}_0$, this proves the
proposition.
\end{proof}

\begin{rem}\label{rem:simplemodel}
If $M$ is a sphere or projective space, the space of circles 
\begin{equation*}
\cup _{k\geq 1}\mathbf{\Theta}^{\wedge k}\subset \Lambda M
\end{equation*}%
is a small simple space carrying the topology of $\Lambda M$ in the sense
described above. Unfortunately it is not invariant under the natural $S^{1} $
action on $\Lambda M$. (See Remark~\ref{rem:equsimple} for a discussion of equivariant models.) 
\end{rem}

\begin{ex}[Theorem~\protect\ref{thm:int} in the case of odd dimensional
spheres]
Let $M=S^{n}$ with $n$ odd and greater than $2$. We use again the notation
from Section~\ref{sec:computations} so that 
\begin{equation*}
H_{\ast }(\Lambda S^{n})=\wedge (A)\otimes \mathbb{Z}[U];
\end{equation*}
thus every homology class is a multiple of $A\wedge U^{\wedge k}$, $k\geq 0$
(in dimension $k(n-1)$), or a multiple of $U^{\wedge k}$ , $k\geq 0$\ (in
dimension $(k+1)n-k$). In the metric in which all geodesics are closed with
minimal length $L$, the homology classes $A\wedge U^{\wedge (2m-1)}$, $%
A\wedge U^{\wedge 2m}$, $U^{\wedge (2m-1)}$, and $U^{\wedge 2m}$ are "at
level $m$", that is, they are supported on $\Lambda ^{\leq mL }$ (but not on 
$\Lambda ^{(m-1)L }$); see \cite[15.2 and Fig 9]{GorHin}. Thus, by the proof
of Theorem~\ref{thm:int}, they are also supported on the cycle $\mathbf{%
\Theta }^{\wedge m}$, the space of all concatenations of $m$ circles with a
common basepoint. It follows that they have representatives with at most $m$%
-fold intersections, and $\vee ^{m}$ should annihilate each of these
classes. Using Proposition~\ref{prop:copspheres} we compute: We have 
\begin{equation*}
\vee ^{m-1}(A\wedge U^{\wedge 2m-1})=(A\wedge U)^{\times m}\neq 0
\end{equation*}%
but 
\begin{equation*}
\vee ^{m}(A\wedge U^{\wedge 2m-1})=0\text{.}
\end{equation*}
Also 
\begin{equation*}
\vee ^{m-1}(A\wedge U^{\wedge 2m})=\Sigma _{j=0}^{m-1}(A\wedge U)^{\times
(m-(j+1))}\times (A\wedge U^{\wedge 2})\times (A\wedge U)^{\times j}\neq 0
\end{equation*}%
but 
\begin{equation*}
\vee ^{m}(A\wedge U^{\wedge k})=0.
\end{equation*}%
The reader can likewise check that $\vee ^{m-1}(U^{\wedge (2m-1)})\neq 0$
and $\vee ^{m-1}(U^{\wedge 2m})\neq 0$ but $\vee ^{m}(U^{\wedge 2m-1})=\vee
^{m}(U^{\wedge 2m})=0$. Thus for each of the four non-trivial homology
classes $Z$ at level $m$, we have 
\begin{equation*}
\vee ^{m}Z=0
\end{equation*}
which confirms (\ref{equast}), and also 
\begin{equation*}
\vee ^{m-1}Z\neq 0.
\end{equation*}
It follows that 
\begin{equation*}
\vee ^{m}Z=0\ \Longleftrightarrow \ Z\text{ is supported on }\Lambda ^{\leq
mL }\ \Longleftrightarrow \ \text{int}([Z])\leq m.
\end{equation*}%
This confirms Theorem~\ref{thm:int} in the case of an odd sphere $S^{n}$, $%
n>2$. Note that the level of a homology class is not monotone in the degree;
the class of lowest degree at level $m+1$ is in a lower degree than the
class in highest degree in level $m$: $\ A\wedge U^{\wedge (2m+1)}$ is in
dimension $(2m+1)(n-1)$ and level $m+1$ but $U^{\wedge 2m}$ is in dimension $%
(2m+1)(n-1)+1$ and level $m$.
\end{ex}

\appendix

\section{Cap product}

\label{app:cap}

Because the cap product is fundamental to our paper, we review its
properties, in the exact terms we will be using them.

\subsection{Relative cap product and its naturality}
Let $X_0,X_1\subset X$ and $Y_0,Y_1\subset Y$ be spaces and (possibly empty)
subspaces satisfying that $C_*(X_0)+C_*(X_1) = C_*(X_0\cup X_1)$ and the
same for $Y_0,Y_1$. Suppose $f\colon (X,X_0,X_1)\to (Y,Y_0,Y_1)$ is a continuous
map from $X$ to $Y$ taking $X_i$ to $Y_i$ for $i=0,1$. Then the cap product
and the map $f$ define maps 
\begin{equation*}
\xymatrix{ C^q(X,X_0) \ot C_p(X,X_0\cup X_1) \ar[r]^-\cap
\ar@<+8ex>[d]_{f_*} & C_{p-q}(X,X_1)\ar[d]^{f_*} \\ C^q(Y,Y_0) \ot
C_p(Y,Y_0\cup Y_1) \ar[r]^-\cap \ar@<+9ex>[u]_{f^*} & C_{p-q}(Y,Y_1). }
\end{equation*}
Naturality of the cap product then says that for any $\alpha\in C^q(Y,Y_0)$
and $B\in C_p(X,X_0\cup X_1)$, we have that 
\begin{equation}  \label{equ:cap}
\alpha \cap f_*(B)= f_*(f^*(\alpha)\cap B).
\end{equation}
In particular for a map $f\colon X\to Y$ and a class $\alpha\in C^k(Y)$, a diagram
of the form 
\begin{equation*}
\xymatrix{C_*(X) \ar[d]_{f^*(\al)\cap }\ar[r]^{f_*} & C_*(Y)\ar[d]^{\al
\cap}\\ C_{*-k}(X) \ar[r]^{f_*} & C_{*-k}(Y) }
\end{equation*}
always commutes.

Slightly more generally, suppose that $D_*\le C_*(X)$ is a subcomplex
satisfying that for any $A\in D_p$, and any $q<p$, the restriction $%
A|_{\sigma_q}$ to the front $q$-face of $\sigma_p$ is in $D_q$. Then the
cap product $\cap\colon C^q(X,X_0)\otimes C_p(X,X_0) \to C_{p-q}(X)$
descends to a product 
\begin{equation*}
\cap\colon C^q(X,X_0)\otimes C_p(X,X_0)/D_p \to C_{p-q}(X)/D_{p-q}
\end{equation*}
where $C_p(X,X_0)/D_p$ is by definition $C_p(X)/(C_p(X_0)+D_p)$. The
standard relative cap product described above is the case when $D_*=C_*(X_1)$
with $C_*(X_0)+C_*(X_1) = C_*(X_0\cup X_1)$, an assumption that we now see
can be dropped by replacing $C_*(X,X_0\cup X_1)$ by $C_*(X)/\big(%
C_*(X_0)+C_*(X_1)\big)$ in the definition of the relative cap product.

In the paper, we will use the standard case, and the case $%
D_*=C_*(X_1)+C_*(X_2)$ for $X_1,X_2\subset X$ that do not necessarily
satisfy $C_*(X_0)+C_*(X_1) = C_*(X_0\cup X_1)$. Because this generalized cap
product is defined as a quotient of the classical cap product, it has the
same naturality properties.

\subsection{Cap product using small simplices}\label{sec:smallsimp}
To spaces $U_0\subset U_1\subset X$ such that the interiors of $U_0^c$ and $%
U_1$ cover $X$, we can associate the chain complex $C^{\mathfrak{U}}_*(X)$
of \emph{small simplices with respect to $\mathfrak{U}=\{U_0^c,U_1\}$},
whose $p$-chains are formal sums of maps $\sigma_p\colon \De^p\to X$ whose image lies
entirely inside either $U_0^c$ or $U_1$. The inclusion $C_*^{\mathfrak{U}%
}(X)\hookrightarrow C_*(X)$ is a chain homotopy equivalence, with explicit
chain homotopy inverse 
\begin{equation*}
\rho\colon C_*(X) \longrightarrow C^{\mathfrak{U}}(X)
\end{equation*}
given e.g.~in \cite[Prop 2.21]{Hatcherbook}, using subdivisions.

Now given a class $\alpha\in C^k(U_1,U^c_0)$, we will write 
\begin{equation*}
[\alpha\;\cap]  \colon C_*(X) \longrightarrow C_{*-k}(U_1)
\end{equation*}
for the composition of maps 
\begin{equation}  \label{equ:capdef}
[\alpha\;\cap]  \colon C_*(X) \twoheadrightarrow C_*(X,U_0^c) \overset{\rho}{\longrightarrow }C^{%
\mathfrak{U}}_*(X,U_0^c) \twoheadrightarrow C_*(U_1,U_0^c) \xrightarrow{\alpha\cap} C_{*-k}(U_1)
\end{equation}
where the middle two maps give an explicit chain inverse for excision, the
map $\rho$ being as above and the following map projecting away the
simplices not included in $U_1$.

In this language, naturality of the cap product becomes the following: 

\begin{lem}\label{lem:natdiagram}
  Let $A\subset U_0\subset U_1\subset X$ and $B\subset V_0\subset
  V_1\subset Y$ be tuples of spaces with $X=\mathring U_0^c\cup \mathring U_1$ and $Y=\mathring V_0^c\cup \mathring V_1$.  Let 
  $f\colon (X,A,U_0,U_1)\to (Y,B,V_0,V_1)$ and fix $\alpha\in C^k(V_1,V_0^c)$.
Then the diagram 
  \begin{equation*}  
\xymatrix{H_*(X,A) \ar[d]_{[f^*(\al)\cap] }\ar[r]^{f_*} & H_*(Y,B)\ar[d]^{[\al\cap]}\\ H_{*-k}(U_1,A) \ar[r]^{f_*} & H_{*-k}(V_1B) }
\end{equation*}
commutes. 
\end{lem}

Note the above naturality statement does not strictly hold on the level of chains because the map $\rho\colon C_*(X)\to C_*^{\mathfrak{U}}(X)$ that subdivides chains is not natural before going to homology. 

\begin{proof} 
  We need to show that the diagram
  $$\xymatrix{ H_*(X,A) \ar[r]\ar[d]_f &H_*(X,U_0^c\cup A) \ar[r]^-{\rho} \ar[d]^f& H^{\mathfrak{U}}_*(X,U_0^c\cup A) \ar[r] \ar[d]^f& H_*(U_1,U_0^c\cup A) \ar[r]^-{\alpha\cap} \ar[d]^f& H_{*-k}(U_1,A) \ar[d]^f\\
    H_*(Y,B) \ar[r] & H_*(Y,V_0^c\cup B) \ar[r]^-{\rho} & H^{\mathfrak{U}}_*(Y,V_0^c\cup B) \ar[r] & H_*(V_1,V_0^c\cup B) \ar[r]^-{\alpha\cap} & H_{*-k}(V_1,B)
  }$$
 commutes, where the second and fourth squares need some attention. The second square commutes because the map $\rho$ is a homotopy inverse to the inclusion, which is natural, and the last square commutes by the standard naturality of the cap product. 
\end{proof}

We give now an example of how such a product will appear in the present
paper.

\begin{ex}
\label{ex:cap} Fix $\varepsilon>\varepsilon_0>0$ and let $X=M^2$ with $%
U_0=U_{M,\varepsilon_0}$ and $U_1=U_{M,\varepsilon}$ for 
\begin{equation*}
U_M=U_{M,\varepsilon}=\{(x,y)\in M^2\ |\ |x-y|<\varepsilon\}\ \subset\ M^2
\end{equation*}
as above, and the same for $\varepsilon_0$. Recall from Section~\ref{tubsec}
that we have picked a Thom class $u_M$ for our manifold $M$ such that $%
\tau_M:=\kappa_M^*u_M$ vanishes on chains supported on the complement of $%
U_{M,\varepsilon _{0}}$, i.e., $\tau_M \in
C^n(M^2,U_{M,\varepsilon_0}^c)$. We will also write 
\begin{equation*}
\tau_M \in C^n(U_{M},U_{M,\varepsilon_0}^c)
\end{equation*}
for its restriction to $U_{M}$. Taking $(X,U_0,U_1)=(M^2,U_{M,\varepsilon_0},U_{M})$ and $\alpha=\tau_M$, the sequence of maps
(\ref{equ:capdef}) defines a map 
\begin{equation*}
[\tau_M\cap] \colon C_*(M^2)\longrightarrow C_{*-n}(U_{\! M}).
\end{equation*}

\smallskip

Consider now also the space $Y=\Lambda^2$, with 
\begin{equation*}
V_1=U_{\CS}=U_{\CS,\varepsilon}:=\{(\gamma,\lambda)\in \Lambda^2\ |\
|\gamma(0)-\lambda(0)|<\varepsilon\}\ \subset \ \Lambda^2
\end{equation*}
and likewise $V_0=U_{\CS,\varepsilon_0}$. Evaluating each loop at $0$ defines a
map 
\begin{equation*}
e\times e\colon (\Lambda^2,V_0,V_1) \longrightarrow (M^2,U_0,U_1)
\end{equation*}
and pulling back the class $\tau_M$ along this map defines a class 
\begin{equation*}
\tau_{\CS}:=(e\times e)^*\tau_M\in C^n(\Lambda^2,U_{\CS,\varepsilon_0}^c)
\end{equation*}
which again can be restricted to $U_{\CS}$. Applying (\ref{equ:capdef}) to this case gives a map 
\begin{equation*}
[\tau_{\CS}\cap]\colon C_*(\Lambda^2)\longrightarrow C_*(U_{\CS}).
\end{equation*}
Moreover, Lemma~\ref{lem:natdiagram} gives that the diagram 
\begin{equation*}
\xymatrix{H_*(\La^2) \ar[d]_{[\tau_{\CS}\cap] }\ar[rr]^-{e\x e} &&
H_*(M^2)\ar[d]^{[\tau_M \cap]}\\ H_{*-k}(U_{\CS}) \ar[rr]^-{e\x
e} && H_{*-k}(U_{M})}
\end{equation*}
commutes.
\end{ex}

\subsection{Commuting the cap and cross products}

We recall here how the cap and cross products interact, for easy reference when we use it:
Suppose $\alpha\in H^*(X)$, $\beta\in H^*(Y)$, $a\in H_*(X)$ and $b\in H_*(Y)$. Then
\begin{equation}\label{equ:capcross}
  (\alpha\x \beta)\cap (a\x b)=(-1)^{|\beta||a|}(\alpha\cap a)\x (\beta\cap b)\in H_*(X\x Y).
  \end{equation}
 (See eg.~\cite[VI. Thm 5.4]{Bredon}.)

\section{Signs in the intersection product}

\label{app:signs} The intersection product $H_*(M)\otimes H_*(M) \to
H_{*-n}(M)$ can be defined using Poincar{\'e} duality, or using the above
Thom collapse map $\kappa_M$: we write
$$\bullet_{\Poin}\colon H_p(M)\ot H_q(M)\xrightarrow{D} H^{n-p}(M)\ot H^{n-q}(M)\xrightarrow{\x} H^{2n-p-q}(M^2) \xrightarrow{\De^*}H^{2n-p-q}(M)\xrightarrow{D} H_{p+q-n}(M)$$
for the definition using Poincar\' e duality and
$$\bullet_{\Thom}\colon H_p(M)\ot H_q(M)\xrightarrow{\x}  H_{p+q}(M^2)\xrightarrow{[\tau \cap]}  H_{p+q-n}(U) \xrightarrow{r}  H_{p+q-n}(M)$$
for the definition using the Thom collapse, where $r\colon U\cong TM \to M$ denotes the retraction of the tubular embedding. 

As the following proposition shows, there is a sign
difference $(-1)^{n-np}$ between the two definitions, and only the definition using Poincar%
{\'e} duality directly yields an associative product. If one uses the Thom
collapse map, one needs to correct the resulting product by a sign to make
it associative.

\begin{prop}\label{prop:intalg} For $[a]\in H_p(M)$,  $[b]\in H_q(M)$ and $[c]\in H_r(M)$, we have 
\begin{enumerate}
\item  $[a]\bullet_{\Poin}[b]=(-1)^{n-np}[a]\bullet_{\Thom} [b] \in H_{p+q-n}(M)$.
\item  The product $\bullet_{\Poin}$ is  unital with unit $[M]$, associative, and satisfies the graded commutativity relation $[a]\bullet_{\Poin}[b]=(-1)^{(n-p)(n-q)}[b]\bullet_{\Poin} [a]$.   
\item The product $\bullet_{\Thom}$ is  associative up to sign
$$([a]\bullet_{\Thom} [b])\bullet_{\Thom}[c]=(-1)^{n-np}[a]\bullet_{\Thom}([b]\bullet_{\Thom} [c])$$
  and satisfies the graded commutativity relation $[a]\bullet_{\Thom}[b]=(-1)^{n-pq}[b]\bullet_{\Thom} [a]$.
  \end{enumerate}
\end{prop}

\begin{proof}
To prove (1), we consider the following diagram: 
\begin{equation*}
\xymatrix{H_{p}(M)\ot H_{q}(M) \ar[r]^-\x & H_{p+q}(M^2)
\ar[r]^-{[\tau_M\cap]} & H_{p+q-n}(U_\eps) & \ar[l]_{\De_*}^\cong H_{p+q-n}(M) \\
H^{n-p}(M)\ot H^{n-q}(M) \ar[u]^{\cap [M]\ot \cap [M]}\ar[r]^-\x &
H^{2n-p-q}(M^2)\ar[u]^{\cap [M^2]} \ar[rr]^{\Delta^*} && H^{2n-p-q}(M)
\ar[u]_{\cap [M]}.}
\end{equation*}
The top line computes the product $\bullet_{\Thom}$ while going around the diagram the other way computes the product $\bullet_{\Poin}$.
Now the left square commutes up to a sign $%
(-1)^{n(n-q)}=(-1)^{n-nq}$ as 
\begin{equation*}
([a]\cap[M])\times([b]\cap [M])=(-1)^{n\deg(b)}([a]\times [b])\cap [M\times M]
\end{equation*}
by (\ref{equ:capcross}). 
For the right square, the defining property of the  Thom class $\tau_M$ is that $\tau_M\cap [M^2]=\Delta_*[M]$. 
Hence
\begin{align*}
\Delta_*(\Delta^*[c]\cap [M])=[c]\cap \Delta_*[M]&=[c]\cap (\tau_M\cap
[M^2])=([c]\cup \tau_M)\cap [M^2] \\
&=(-1)^{nk}(\tau_M\cup [c])\cap [M^2]=(-1)^{nk}\tau_M\cap ([c]\cap [M^2])
\end{align*}
for any $[c]\in H^k(M^2)$, where the first equality uses the naturality of the cap product (\ref{equ:cap}). This shows that the right square commutes up to the sign $(-1)^{n(2n-p-q)}=(-1)^{np+nq}$.
So the whole diagram commutes up to a sign $(-1)^{n-np}$, giving the result. (See also \cite[Prop 4]{CJY}.)

\medskip

The properties of $\bullet_{\Poin}$ given in (2) follow directly from the corresponding properties of the cup product:
\begin{align*}
  [a]\bullet_{\Poin}[b]&=D(D^{-1}[a]\cup D^{-1}[b])
                 =(-1)^{(n-p)(n-q)}D(D^{-1}[b]\cup D^{-1}[a])=(-1)^{(n-p)(n-q)}[b]\bullet_{\Poin}[a]
                  \end{align*}
                  and likewise for associativity:
\begin{align*}
  ( [a]\bullet_{\Poin}[b])\bullet_{\Poin} [c] =D\Big(D^{-1}\big(D(D^{-1}[a]\cup D^{-1}[b])\big)\cup D^{-1}[c]\Big)
                                  &=D\Big(D^{-1}[a]\cup D^{-1}[b]\cup D^{-1}[c]\Big)\\
                                  &=\cdots =[a]\bullet_{\Poin}([b]\bullet_{\Poin} [c])
\end{align*}
and the unit:
$$ [M]\bullet_{\Poin}[a]=D(D^{-1}[M]\cup D^{-1}[a])=D(1\cup D^{-1}[a])=[a]=\dots =[a]\bullet_{\Poin}[M].$$
and likewise for multiplying with $[M]$ on the right. 

To check the corresponding properties for the Thom product $\bullet_{\Thom}$, we can combine the computations in (1) and (2), which gives:
\begin{align*}
  [a]\bullet_{\Thom}[b]=(-1)^{n-np}[a]\bullet_{\Poin}[b]=(-1)^{n-np+(n-p)(n-q)}[b]\bullet_{\Poin}[a]&=(-1)^{n-np+(n-p)(n-q)+n-qn}[b]\bullet_{\Thom}[a] \\
  &=(-1)^{n-pq}[b]\bullet_{\Thom}[a] 
  \end{align*}
and
\begin{multline*}
  ([a]\bullet_{\Thom} [b])\bullet_{\Thom}[c]=(-1)^{n-np+n-n(p+q-n)}([a]\bullet_{\Poin} [b])\bullet_{\Poin}[c] =(-1)^{nq-n}[a]\bullet_{\Poin}([b]\bullet_{\Poin} [c])\\
  =(-1)^{nq-n+n-nq+n-np}[a]\bullet_{\Thom}([b]\bullet_{\Thom} [c])=(-1)^{n-np}[a]\bullet_{\Thom}([b]\bullet_{\Thom} [c]).
  \end{multline*}

For the intrigued reader that wonders how sign associativity could ever arise in such a naturally defined product, or for the reader who wants to see some robustness in our sign computations, we also give a direct proof of that last sign, from the definition of $\bullet_{\Thom}$. Essentially the same diagram can also be used to compute the sign for the associativity of the Chas-Sullivan product $\wedge_{\Thom}$. Consider the diagram:
$$\xymatrix{H_p(M)\ot H_q(M)\ot H_r(M) \ar[r]^-\x\ar[d]_\x & H_{p+q}(M^2)\ot H_r(M) \ar[r]^-{[\tau\cap]\ot 1} \ar[d]_\x\ar@{}[dr]|{(1)}  & H_{p+q-n}(U)\ot H_r(M) \ar[d]^\x \\
  H_p(M)\ot H_{q+r}(M^2)\ar@{}[dr]|{(2)} \ar[r]^-\x\ar[d]_{1\ot [\tau\cap]} & H_{p+q+r}(M^3) \ar[r]_-{[(\tau\x 1)\cap]} \ar[d]_{[(1\x\tau)\cap]} \ar@{}[ddr]|{(3)}   & H_{p+q+r-n}(U\x M) \ar[d]^{r\x 1} \\
  H_p(M)\ot H_{q+r-n}(U) \ar[r]^-\x\ar[d]_{1\ot r} & H_{p+q+r-n}(M\x U)  \ar[d]_{1\x r} & H_{p+q+r-n}(M^2)\ar[d]^{[\tau\cap]} \\
   H_p(M)\ot H_{q+r-n}(M) \ar[r]^-\x & H_{p+q+r-n}(M^2) \ar[r]^-{[\tau\cap]}  & H_{p+q-2n}(U) 
}$$
Diagram (1) commutes while diagram (2) commutes up to the sign $(-1)^{np}$ by (\ref{equ:capcross}).
The map $r\colon U\to M$ takes $(x,y)$ to $x$, but  is homotopic (using the unique geodesic between $x$ and $y$) to the map $r'$ taking $(x,y)$ to $y$ instead.  Hence
$ (1\x r)^*\tau=\tau\x 1$ and $ (r\x 1)^*\tau= (r'\x 1)^*\tau=1\x \tau$. 
It follows that diagram (3) commutes up to the sign $(-1)^n$ because one composition caps with the class
$ (1\x r)^*\tau\cup(1\times \tau) =(\tau\x 1)\cup (1\times \tau)$ while the other caps with class $(r\x 1)^*\tau\cup  (\tau\times 1)=(1\x \tau)\cup (\tau\times 1)$, and the result follows from the sign commutativity of the cup product.  
\end{proof}

\bibliographystyle{plain}
\bibliography{biblio}

\end{document}